\pgfplotsset{width=10cm,compat=1.9}
\def\acts{\mathrel{\reflectbox{$\lefttorightarrow$}}}
\newcommand{\Z}{\mathbb{Z}}
\newcommand{\uZ}{\underline{\mathbb{Z}}}
\newcommand{\al}{a_{\lambda}}
\newcommand{\aal}{a_{\alpha}}
\newcommand{\atal}{a_{2\alpha}}
\newcommand{\ats}{a_{2\sigma}}
\newcommand{\ul}{u_{\lambda}}
\newcommand{\uta}{u_{2\alpha}}
\newcommand{\uts}{u_{2\sigma}}
\newcommand{\ual}{u_{\alpha}}
\newcommand{\HPt}{H^{\Phi_2}}
\newcommand{\HPf}{H^{\Phi_4}}
\newcommand{\Tre}{\langle \tr(e_{3\alpha})\rangle[\uta^{-1},\aal^{-1}]}
\newcommand{\tre}{\tr(e_{3\alpha})}
\newcommand{\Si}{\Sigma^{-1}}
\newcommand{\xr}[2]{\underset{#2}{\xrightarrow{#1}}}
\newcommand{\F}{\mathscr{F}}
\newcommand{\PP}{\mathscr{P}}
\newcommand{\eal}{e_{\alpha}}
\newcommand{\etal}{e_{2\alpha}}
\newcommand{\ethal}{e_{3\alpha}}
\newcommand{\res}{res^4_2}
\newcommand{\tr}{tr^4_2}
\newcommand{\bs}{\bigstar}
\theoremstyle{plain}
\newtheorem{Thm}{Theorem}
\newtheorem*{Thm*}{Theorem}
\newtheorem{Lem}[Thm]{Lemma}
\newtheorem{Prop}[Thm]{Proposition}
\newtheorem{Cor}[Thm]{Corollary}
\newtheorem{Rem}{Remark}
	\title{The $RO(C_{2^n})$-graded homotopy of $H\underline{\mathbb{Z}}$ through generalized Tate squares}
	\author{Guoqi Yan}
	\address{}
	\email{gyan@nd.edu}
\date{\today}
\begin{document}

\begin{abstract}
 We propose a new method to compute the $C_{2^n}$-equivariant homotopy groups of the Eilenberg-Mac Lane spectrum $H\underline{\mathbb{Z}}$ as a $RO(C_{2^n})$-graded Green functor using the generalized Tate squares. As an example, we completely compute the $C_4$ case and investigate two $\mathscr{P}$-homotopy limit spectral sequences for the family $\mathscr{P}=\{e,C_2\}$.
\end{abstract}

\maketitle
\tableofcontents

\section{Introduction}
\subsection{\texorpdfstring{$RO(C_{2^n})$}{roc2n}-graded homotopy of \texorpdfstring{$H\uZ$}{HZ}}
Computation of the $RO(G)$-graded homotopy of $H\uZ$ for a finite group $G$ and the constant Mackey functor $\uZ$ has been a meaningful but difficult task. Recently, the work of \cite{MNN19},\cite{Angel22} and \cite{Liu22} reduced this task to $p$-groups. Cyclic groups $C_{p^n}$ of prime power order are among the easiest $p$-groups. The case $C_p$ for odd $p$ is computed by Lewis and Stong, and the $p=2$ case is computed in \cite{Duggerc2} and also \cite{Gre18}. \cite{Zeng17} completely computed $C_{p^2}$ for an odd prime $p$ and suggested methods for $p=2$ and bigger $n$. The additive structure is also considered in \cite{basu21}. Cyclic $2$-groups $C_{2^n}$ have special importance due to the great work of \cite{HHRa}. The regular-representation-suspended $H\uZ$ appears as the slices of various important $G$-spectra in the work of \cite{HHRa},\cite{HHRb},\cite{c4ht4}, etc. For example, one of the key techical result of \cite{HHRa} is

\begin{Thm}(The slice theorem)
The $C_8$-spectrum $P^n_nMU^{((C_8))}$ is contractible if $n$ is odd. It is weakly equivalent to $H\uZ\wedge W$ if $n$ is even, where $W$ is a wedge of isotropic slice cells.
\end{Thm}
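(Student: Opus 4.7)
The plan is to exhibit $MU^{((C_8))}$ as an $H\uZ$-cellular object whose cells are isotropic slice cells $(C_8/H)_+\wedge S^{k\rho_H}$, and then to check that every such cell appearing in the decomposition has even dimension, so that only even-indexed slices can be nontrivial. The argument breaks into three algebraic inputs: a generating set, a filtration, and a reduction theorem.

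First I would recall that $MU_{\mathbb{R}}$ has Araki/Hu--Kriz generators $\bar{r}_i\in\pi_{i\rho_2}^{C_2}(MU_{\mathbb{R}})$ for $i\geq 1$. Pushing these forward via the norm $N_{C_2}^{C_8}$ produces a polynomial generating set for $MU^{((C_8))}$ whose generators are naturally indexed by cosets $C_8/C_2$. Monomials $\bar{r}^I$ in these generators carry a $C_8$-action permuting the coset labels; a monomial with stabilizer $H$ must therefore satisfy $H\supseteq C_2$, so $H\in\{C_2,C_4,C_8\}$. Ordering the monomials by total degree equips $MU^{((C_8))}$ with an equivariant filtration whose associated graded is a wedge of isotropic slice cells $(C_8/H)_+\wedge S^{k\rho_H}$ smashed with the quotient $MU^{((C_8))}/(\bar{r}_i)_{i\geq 1}$.

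The central and hardest ingredient is the Reduction Theorem: the quotient $MU^{((C_8))}/(\bar{r}_i)_{i\geq 1}$ is equivalent to $H\uZ$. I would approach this by constructing an equivariant refinement of the classical orientation $MU\to H\mathbb{Z}$ and identifying its cofiber with the ideal generated by the $\bar{r}_i$'s and their $C_8$-translates. The delicate step is verifying this equivalence after taking geometric fixed points $\Phi^H$ at each subgroup $H\leq C_8$; the natural device is an induction on $|H|$ exploiting the compatibility of $\Phi^H$ with the norm construction. I expect this to be the principal obstacle, since it is where the special structure of $N_{C_2}^{C_8}MU_{\mathbb{R}}$ must be used in an essential way, and where careful bookkeeping of how the various $\bar{r}_i^{(g)}$ interact under geometric fixed points is required.

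Granting the Reduction Theorem, the $n$-th slice of $MU^{((C_8))}$ becomes $H\uZ$ smashed with the sub-wedge of slice cells coming from monomials of total dimension $n$. Since every such cell has the form $(C_8/H)_+\wedge S^{k\rho_H}$ with $H\supseteq C_2$, its underlying dimension $k|H|$ is even. Consequently no monomials of odd total dimension contribute, forcing $P^n_n MU^{((C_8))}$ to be contractible when $n$ is odd and of the stated form $H\uZ\wedge W$ with $W$ a wedge of isotropic slice cells when $n$ is even.
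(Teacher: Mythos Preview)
The paper does not prove this theorem; it is quoted verbatim from \cite{HHRa} as motivation for computing $\pi_{\bigstar}H\uZ$, and no argument is supplied. So there is no ``paper's own proof'' to compare against.

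That said, your outline is essentially the Hill--Hopkins--Ravenel strategy, and the structural pieces you identify---the polynomial generators $\bar r_i\in\pi_{i\rho_2}^{C_2}MU_{\mathbb R}$, the norm-induced $C_8$-action on monomials, the slice-cell filtration, and the Reduction Theorem $MU^{((C_8))}/(\bar r_i)\simeq H\uZ$---are the right ones. One caution: your sketch of the Reduction Theorem (``construct a refinement of $MU\to H\Z$ and identify its cofiber'') understates what is involved. In \cite{HHRa} the proof proceeds by a separate inductive argument on the order of the group, comparing slice towers and invoking the geometric fixed-point description of $H\uZ$; it is not obtained simply by analyzing a single cofiber sequence. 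You flag this as ``the principal obstacle,'' which is accurate, but the method you propose for surmounting it is not the one that actually works in the literature.
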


As a result, the $RO(C_{2^n})$-graded homotopy of $H\uZ$ serves as the input of the $RO(C_{2^n})$-graded slice spectral sequences of various important $C_{2^n}$-spectra. This motivates us to consider the $G=C_4$ case as a starting point and the same method applies to $C_{2^n}$ for bigger $n$. Actually, our computation also suggests the computations at odd primes as we will explain below.

The $RO(C_4)$-graded homotopy of $H\uZ$ has been considered by several different authors, see \cite{HHRb}, \cite{Zeng17} and \cite{NickG}. \cite{HHRb} computed partial information of the Green functor. \cite{Zeng17} computed $RO(C_{p^2})$-graded homotopy of $H\uZ$ for odd prime $p$ and pointed out a method to compute the case $p=2$, without carrying it out due to the amount of labor needed. He actually considered the $p=2$ case as a more refined case than the odd primary case, while we will try to simplify the computation by taking another route. We observe that our answer, when restricted to orientable representation gradings, is consistent with the odd primary case. \cite{NickG} produced computer programs to carry out the computation of the entire Green functor. His computation is based on considering the box product of cellular chains of different representation spheres, which could be too complicated to be carried out by hand.

\subsection{Our new perspectives}
The complexity of $RO(C_{p^n})$-graded homotopy of $H\uZ$ grows drastically with respect to $n$: As $n$ grows, new irreducible representations are introduced and new levels of the Mackey functors are added. Thus it will become impractical to carry out degree-wise computations, which are mostly used in cellular computations. Our philosophy in this paper is that, instead doing degree-wise computations, we should focus on the global multiplicative structure. The way to realize this idea is through the filtration given by the following chain of families of subgroups
\begin{equation}
    \emptyset\subset \{e\}\subset\{e,C_p\}\subset\cdots\subset \{e,\cdots,C_{p^{n-1}}\}=\PP\subset \mathscr{A}ll.\label{chainoffamily}
\end{equation}
Denote $\mathscr{F}_i=\{e,C_p,\cdots,C_{p^i}\},0\leq i\leq n$, and $\F_{-1}=\emptyset$. 

For any odd prime $p$, the irreducible orthogonal representations of $C_{p^n}=\langle\gamma\rangle$ are: the trivial one-dimensional representation $1$ and the two-dimensional ones $\lambda(m)$ where $\gamma$ acts by rotation of $\frac{2m\pi}{p^n},0<m<2^{n-1}$. The homotopy type of $S^{\lambda(m)}\wedge H\uZ$ only depends on the $p$-adic valuation of $m$ \cite{HHRb}. Thus for odd primes we can consider 
\[
RO(C_{p^n})=\Z\{1,\lambda_{n-1},\lambda_{n-2},\cdots,\lambda_1,\lambda_0\},
\]
where $\lambda_{j}=\lambda(p^j)$ is the representation with isotropy $C_{p^j}$. Thus we can write $\lambda_n=2$ if we prefer. For $p=2$, we use the notation $\lambda_{n-1}=2\alpha$, where $\alpha$ is the one-dimensional real sign representation. In this case we have
\[
RO(C_{2^n})=\Z\{1,\alpha,\lambda_{n-2},\cdots,\lambda_1,\lambda_0\},
\]

For now, we consider all primes at the same time and use the convention $\lambda_{n-1}=2\alpha$ when $p=2$. Because of the models $\widetilde{E\mathscr{F}_i}\simeq S^{\infty\lambda_i}$ (note $\widetilde{E\emptyset}\simeq S^0$ and $\widetilde{E\mathscr{A}ll}\simeq *$), the above chain (\ref{chainoffamily}) realizes as
\begin{equation}
H\to H[a_{\lambda_0}^{-1}]\to H[a_{\lambda_1}^{-1}]\to \cdots\to H[a_{\lambda_{n-1}}^{-1}]\to *,\label{toweroflocalizations}
\end{equation}
where we use $H$ for $H\uZ$ for a shorter notation, $H[a_{\lambda_i}^{-1}]=H\wedge \widetilde{E\mathscr{F}_i}$, and $a_{\lambda_{i}}:S^0\to S^{\lambda_i}$ is the Hurewicz image of the map that is the inclustion of fixed-points on the space level, which is usually called the Euler class. This provides us with a tower of localizations, and we can 'delocalize' the tower to get $\pi_{\bs}H$.

To realize our philosophy further, we present our result in a concise and clean way so that some global phenomenon can be easily seen. Actually, the clean presentation helps us discover the localization theorem proved in another paper \cite{LocThm}. It also enables us to read off $H[a_{\lambda_i}^{-1}]_{\bs}$ directly from the knowledge of $H_{\bs}$. The $C_4$ case is shown in section \ref{ringstructure}. It is especially useful for people who mainly care about the effect of multiplication by Euler classes.

Another novelty of our paper is that we consider the generalized Tate square for families beyond the trivial family $\{e\}$. In the $C_4$ case, we considered the family $\PP=\{e,C_2\}$. Along this line, we can use general families $\F_i$ in (\ref{chainoffamily}) to do computations for bigger $n$. The spectral sequence for the trivial family $\{e\}$ (known as the homotopy fixed-point spectral sequence) used in \cite{Zeng17} has no differentials. The $\PP$-homotopy limit spectral sequence does not collapse and the differentials in it may suggest general patterns about the $\F_i$-homotopy limit spectral sequences for bigger $n$.

A big advantage of using the $\F_i$-Tate square, and the $\F_i$-homotopy limit spectral sequence, though not obvious in this paper, is that it enables us to do computations of homotopy of Eilenberg-Mac Lane spectra $H\underline{M}$ for general $C_{p^n}$-Mackey functors, not only the constant ones. The key fact used in the $H\uZ$ computation in \cite{Zeng17} is that since $\uZ$ is constant,
\[
(H\uZ_{C_{p^n}})^{C_{p^i}}\cong H\uZ_{C_{p^n}/C_{p^i}}.
\]
Here we use $H\uZ_{K}$ to denote the $K$-equivariant $H\uZ$. This fact enables him to deduce $\underline{\pi}_VH\uZ$ from quotient groups when $V$ does not contain copies of $\lambda_0$. This deduction no longer works for non-constant Mackey functors. For example, $(H\underline{\mathbb{A}}_{C_4})^{C_2}$ is not equivalent to $H\underline{\mathbb{A}}_{C_4/C_2}$, where $\underline{\mathbb{A}}$ is the Burnside Green functor. With the help of the generalized Tate squares, however, the author is able to use the comparison map $H\underline{\mathbb{A}}_{C_4}\to H\underline{\mathbb{Z}}_{C_4}$ to deduce the $RO(C_4)$-graded homotopy of the former spectrum.

The $RO(C_{p^n})$-graded homotopy Green functor can be understood as different layers of $RO(C_{p^n})$-graded commutative rings indexed by the subgroups, connected by restrictions and transfers. Even though we are computing layer-by-layer using Tate squares in this paper, the Mackey functor structure is not lost. We can consider $RO(C_{p^n})\text{'}\subset\text{'} RO(C_{2^n})$ as the index $2$ subgroup of orientable representations, thus focus on the $p=2$ case. In this case, $res^{C_{2^i}}_{C_{2^{i-1}}}$ and $tr^{C_{2^i}}_{C_{2^{i-1}}}$ are already know for $i\leq  n-1$. $res^{C_{2^n}}_{C_{2^{n-1}}}$ and $tr^{C_{2^n}}_{C_{2^{n-1}}}$ can be deduced from the following two cofiber sequences
\begin{equation}
	\begin{split}
	&S^{-1}\xrightarrow{\aal}S^{\alpha-1}\to C_{2^n}/{C_{2^{n-1}}}_+\xrightarrow{res}S^0\xrightarrow{\aal}S^{\alpha},\\
	&S^1\xleftarrow{\aal}S^{1-\alpha}\xleftarrow{}C_{2^n}/{C_{2^{n-1}}}_+\xleftarrow{tr}S^0\xleftarrow{\aal}S^{-\alpha}.\label{restr}	
	\end{split}
\end{equation}
Here the second cofiber sequence is obtained from the first one by taking Spanier-Whitehead dual, and the maps $res,tr$ are the maps that will become $res^{C_{2^n}}_{C_{2^{n-1}}}$ and $tr^{C_{2^n}}_{C_{2^{n-1}}}$ after applying the contravariant functors $[-,S^{-V}\wedge H]^{C_{2^n}}, V\in RO(C_{2^n})$.

Now we introduce our method in more details. The main tool we will use is the generalized Tate square for a family of subgroups $\F$ and an arbitrary $G$-spectrum $E$:
\begin{equation}
	\xymatrix{
		E\F_+\wedge E\ar[d]_{\simeq}\ar[r] &E\ar[d]\ar[r] &\widetilde{E\F}\wedge E\ar[d]\\
		E\F_+\wedge F(E\F_+,E)\ar[r] &F(E\F_+,E)\ar[r] &\widetilde{E\F}\wedge F(E\F_+,E)
	}.\label{FTate}
\end{equation}
Here $G$ can be any compact Lie group, but in this paper we are only concerned with $G=C_{2^n}$. $E\F$ is the universal $G$-space characterized up to $G$-equivalence by \cite[p.44-45]{Alaska}
\[E\F^H=
\begin{cases}
	*,\quad H\in \F,\\
	\emptyset,\quad \text{otherwise}.
\end{cases}
\]
$\widetilde{E\F}$ is defined by the following cofiber sequence:
\[
E\F_+\to S^0\to \widetilde{E\F}.
\]
That the vertical map on the left is an equivalence of $G$-spectra is proved in \cite[Prop 17.2, p.100]{GM95}. We will use this canonical isomorphism to identify elements in $\pi_{\bigstar}(E\F_+\wedge E)$ with that of $\pi_{\bigstar}(E\F_+\wedge F(E\F_+,E))$, which we will compute from the second row.

Every two successive terms in the tower (\ref{toweroflocalizations}) fit in a generalized Tate square
\begin{equation}
	\xymatrix{
		E{\F_i}_+\wedge \widetilde{E{\F_{i-1}}}\wedge H\ar[d]_{\simeq}\ar[r] &\widetilde{E{\F_{i-1}}}\wedge H\ar[d]\ar[r] &\widetilde{E{\F_i}}\wedge H\ar[d]\\
		E{\F_i}_+\wedge \widetilde{E{\F_{i-1}}}\wedge H\ar[r] &F(E{\F_i}_+,\widetilde{E{\F_{i-1}}}\wedge H)\ar[r] &\widetilde{E{\F_i}}\wedge F(E{\F_i}_+,\widetilde{E{\F_{i-1}}}\wedge H)
	}.
\end{equation}
if we take $\F=\F_{i}$ and $E=\widetilde{E{\F_{i-1}}}\wedge H$. Here we used the fact that $\widetilde{E{\F_{i-1}}}\wedge \widetilde{E{\F_{i}}}\simeq \widetilde{E{\F_{i}}}$, which can be easily checked by looking at fixed-points on the space level. The celluar filtration of $E{\F_i}_+\simeq S(\infty\lambda_{i})_+$ gives us a spectral sequence computing the $RO(G)$-graded homotopy of $F(E{\F_i}_+,\widetilde{E{\F_{i-1}}}\wedge H)$. This spectral sequence has three different names as in \cite[Prop 2.24]{MNN19}. It is also a $a_{\lambda_i}$-Bockstein spectral sequence. We choose the name the $\F_i$-homotopy limit spectral sequence from \cite{MNN19}. Note that the cells of $E{\F_i}_+$ all have orbit type $C_{2^n}/C_{2^i}$, thus it only requires knowledge of the $C_{2^i}$-equivariant homotopy $\pi_{\bs}^{C_{2^i}/C_{2^i}}\widetilde{E{\F_{i-1}}}\wedge H$, which is known by induction. Since it is always a $\Z/2$-algebra as a localization at the Euler class of the sign representation of $C_{2^i}$, the Weyl actions are trivial. The homotopy of $\widetilde{E{\F_i}}\wedge H$ is computed by induction and the initial case $\widetilde{E{\PP}}\wedge H$ is computed as \cite[Prop 3.18, p.42]{HHRa}
\begin{equation}
\pi_{\bs}^{G/G}\widetilde{E{\PP}}\wedge H=\Z/2[\uta,\aal^{\pm},a_{\lambda_0}^{\pm},\cdots,a_{\lambda_{n-2}}^{\pm}].\label{geometric}
\end{equation}

After we have a complete understanding of $\pi_{\bigstar}^{G/G}H$, we work backwards and investigate the $\PP$-homotopy limit spectral sequence for $\pi_{\bigstar}^{G/G}F(E\PP_+,H)$. Computation of this spectral sequence motivates us to prove a localization theorem in \cite{LocThm}. It applies to much more general situations, but in our setting where $G=C_{p^n}$, $p$ any prime, it states
\[
F(E{\F_i}_+,H)\simeq H[u_{\lambda_i}^{-1}].
\]

\addtocontents{toc}{\protect\setcounter{tocdepth}{0}}
 
\subsection*{Organization} The structure of the paper is as follows: In section $2$ we do some computations in the main Tate square. In section $3$ we compute the $\PP$-homotopy limit spectral sequence for $\HPt$ and use the $\PP$-Tate square to determine its homotopy. In section $4$ we complete the computation of the additive structure of $\pi^{C_4/C_4}_{\bigstar}H\uZ$. In section $5$ we determine the multiplications of this $RO(C_4)$-graded commutative ring. In section $6$ we give a complete list of the Mackey functor structure. In section 7, we compute the $\PP$-homotopy limit spectral sequence for $\pi_{\bigstar}^{C_4/C_4}F(E\PP_+,H)$. In Appendix A, we compare our computation with the computer-based computation by Nick Georgakopoulos. In Appendix B, we discuss the $RO(G)$-graded commutativity issue and show that any two classes in $\pi^{C_4/C_4}_{\bigstar}H\uZ$ commute on the nose without a sign. 

\subsection*{Acknowledgement.} I want to thank my advisor Mark Behrens for his careful proofreading of the paper and his generous support for my research. I want to thank Mingcong Zeng for a meaningful conversation a year ago and his previous papers on this subject. I also want to thank Jack Carlisle for pointing out some typos and Igor Sikora for many suggestions that improved the readability of the paper.

\addtocontents{toc}{\protect\setcounter{tocdepth}{2}}

\begin{center}
NOTATION
\end{center}
\begin{itemize}
 \item In what follows, $G=C_4$. Its generator is denoted by $\gamma$, i.e., $C_4=\langle \gamma\rangle$. We use $e$ for both the identity element and the trivial subgroup. $tr_2^4$ is short for  $tr_{C_2}^{C^4}$. Similar for $tr_1^2$ and restrictions. As in \cite{NickG}, we use notations like $\overline{\ul}=\res(\ul),\overline{\overline{\ul}}=res^4_1(\ul)$ for simplicity. Double bars mean that we restrict to two levels lower.
 \item $H=H\uZ$.
 \item To avoid confusion, we use $C_2=\langle \gamma^2\rangle$ to denote the subgroup of $C_4$ and $C_2'=C_4/{C_2}=\langle \bar{\gamma}\rangle$ to denote the quotient group. So that we have the short exact sequence $0\to C_2\to C_4\to C_2'\to 0$. The sign representation of $C_2$ is denoted by $\sigma$, and the sign representation of $C_2'$ and $C_4$ is denoted by $\alpha$.
 \item In this paper we will take $RO(C_4)=\Z\{1,\alpha,\lambda\}$. There is another irreducible complex representation $\lambda'$ where $\gamma$ acts by rotation of $\pi/2$ clockwise. But we have $\lambda\cong\lambda'$ as real orthogonal representations, with the isomorphism given by complex conjugation.
 \item We use the plain star $*$ to denote integer grading, $\bigstar=*+*\alpha+*\gamma\in RO(G)$ to denote $RO(G)$-grading. For example, $\underline{\pi}_{*+*\alpha}H\uZ=\underset{m,n\in \Z}{\oplus}\underline{\pi}_{m+n\alpha}H\uZ$, or $\underline{\pi}_{\bigstar}H\uZ=\underset{V\in RO(G)}{\oplus}\underline{\pi}_VH\uZ$.
 \item $[\quad]$ means a polynomial generator while $\langle a,b\rangle$ means additive generators. For example, $\Z/2[c]\langle a,b\rangle$ means we have elements of the form $c^ia,c^ib, i\geq 0$ all of whom are 2-torsion. $\Z/2\langle a^i\rangle_{i\geq 1}\langle b^j\rangle_{j\geq 1}$ means we have elements of the form $a^ib^j,i,j\geq 1$ all of whom are 2-torsion.
 \item $x^{-1}$ means divisibility, i.e., $x^{-1}y$ (or equally $\frac{y}{x}$) means an element that satisfies $x\cdot{x^{-1}y}=y$. $\langle z\rangle[\frac{1}{x}]$ means we have elements $z,\frac{z}{x},\frac{z}{x^2},\frac{z}{x^3},\cdots$.
 \item Note some elements of the form $2a,4b$. In some cases they are just formal symbols, not 2 times of an actual element $a$, or 4 times of an actual element $b$. Their meaning will be explained in context. While in some other cases, for example, in the gold relation $\atal\ul=2\al\uta$, the 2 on the right-hand side indeed mean 2 times of $\al\uta$. Whether it is the first or the second case is easy to tell from context. Sometimes to emphasize the actual multiplication, we will use $c\cdot d$ to mean the multiplication of the actual classes $c$ and $d$. 
 \item All Mackey functors has a underlying bar below it. For a homotopy Mackey functor $\underline{\pi_V}$, we use both ${\pi_V^{G/H}}$ and $\underline{\pi_V}(G/H)$ to denote its value at $G/H$-level. When we write $\pi_V$, we mean $\underline{\pi_V}(G/G)\in Ab$. For example, $\HPt_{\bigstar}=\underline{\pi_{\bigstar}}\HPt(G/G)$. 
 \item We use $i^*_H$ to denote the various pullback functors $i^*_H:Top^G\to Top^H, RO(G)\to RO(H),Sp^GU\to Sp^Hi^*_HU$, where $U$ is a complete $G$-universe. The meaning of this symbol is clear from the context.
 \item Classes we will use are the Euler classes $a_V\in\pi_{-V}^{G/G}H\uZ$ for $V$ an actual representation with $V^G=0$, the orientation classes $u_V\in\pi^{G/G}_{|V|-V}H\uZ$ where $V$ is an actual orientable representation with $V^G=0$, like $2\alpha$ and $\lambda$. We also have the classes $u_{\alpha}\in \pi^{G/C_2}_{1-\alpha}H\uZ$ and its inverse $\eal\in\pi^{G/C_2}_{\alpha-1}H\uZ$. In \cite[Def 3.4 and Lem 3.5]{HHRb}, $e_{\alpha}\cdot u_{\alpha}=\pm 1\in \pi^{G/C_2}_{0}H=\Z$. We choose the $e_{\alpha}$ that takes the positive sign such that we have $e_{\alpha}=u_{\alpha}^{-1}\in \pi^{G/C_2}_{\alpha-1}H$. Similarly, we take $e_{\lambda}$ such that $e_{\lambda}=(res^4_1({\ul}))^{-1}\in \pi^{G/e}_{\lambda-2}H$. Properties of these classes are listed in \cite[Def 3.4 and Lem 3.6]{HHRb}.
 \item We denote $\HPt=H\wedge \widetilde{EC_4}$ and $\HPf=H\wedge \widetilde{E\{e,C_2\}}$. The notations are motivated by
\[
(\HPt)^{C_2}=H^{\Phi C_2},\,(\HPf)^{C_4}=H^{\Phi C_4}.
\]
See \cite[Thm 9.8, p.112]{LMS}.
\end{itemize}

\section{Some computations in the main Tate square}
We start from the Tate square which will be referred as the main Tate square
\begin{equation}
	\xymatrix{
		H_h\ar[d]_{\simeq}\ar[r] &H\ar[d]\ar[r] &\HPt\ar[d]\\
		H_h\ar[r] &H^h\ar[r] &H^t
	}.\label{Tate}
\end{equation}
Here $H_h=EG_{+}\wedge H,H^h=F(EG_+,H),H^t=H^h\wedge\widetilde{EG}$. The computation starts from the homotopy group of the Borel completion $H^h=F(EG_+,H\uZ)$. $EC_4$ admits a good model $S(\infty\lambda)$, whose cellular filtration looks like:
\[\xymatrixcolsep{1pc}
\xymatrix{
	{*}\ar[rr] &&S(\lambda)_+\ar[ld]_{\simeq}\ar[rr] &&S(2\lambda)_+\ar[ld]\ar[rr] &&S(3\lambda)_+\ar[ld]\ar[rr]&&\cdots\\
	&G/e_+\ar@{-->}[ul] &&G/e_+\wedge S^1\ar@{-->}[ul] &&G/e_+\wedge S^2\ar@{-->}[ul]
}.
\]
Apply $\pi_V F(-,H\uZ)$ to the above filtration gives us the homotopy fixed-point spectral sequence.
\begin{Thm} The $RO(C_4)$-graded homotopy fixed point spectral sequence (HFPSS) takes the form \cite[Def 1.4]{Gre18}
\begin{equation}
	E_2^{V,s}=H^s(C_4;\pi_V^{G/e}H)\Rightarrow \pi_{V-s}^{G/G}H^h,\, |d_r|=(r-1,r)\label{HFPSS}
\end{equation}
\end{Thm}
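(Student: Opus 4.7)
The plan is to produce the spectral sequence as the one associated to the tower obtained by applying $F(-, H)$ to the cellular filtration of $(EC_4)_+ \simeq S(\infty\lambda)_+$ displayed just above the statement. Applying the contravariant functor $F(-, H)$ reverses the arrows and yields a tower whose homotopy inverse limit is $F((EC_4)_+, H) = H^h$. Because every cell in $S(\infty\lambda)_+$ lies in the free orbit $G/e$, the successive fibers of this tower have the form $F((G/e)_+ \wedge S^s, H) \simeq \Sigma^{-s} F((G/e)_+, H)$, and by the Wirthmüller isomorphism $F((G/e)_+, H) \simeq (G/e)_+ \wedge H$ as $G$-spectra.

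Next I would identify the $E_1$ page and its differentials. The induction-restriction adjunction gives $\pi_V^{G/G}\bigl((G/e)_+ \wedge H\bigr) \cong \pi_V^{G/e} H$, so applying $\pi_V^{G/G}$ to the tower produces a spectral sequence whose value in filtration $s$ is $\pi_V^{G/e} H$. The $d_1$-differential is induced by the attaching maps of the cells of $S(\infty\lambda)_+$; since this is a free $G$-CW model for $EC_4$, its cellular chain complex is exactly the standard free $\mathbb{Z}[C_4]$-resolution of $\mathbb{Z}$, so the induced $d_1$-complex computes $H^s(C_4; \pi_V^{G/e} H)$ at $E_2$, with the $C_4$-module structure on the coefficients coming from the Weyl action on free orbits.

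Conditional convergence to $\pi_{V-s}^{G/G} H^h$ follows from the usual Milnor $\lim^1$ argument, as $H^h$ is by definition the inverse limit of the tower, and the tower is finitely filtered in each $\pi_V$. The differential bidegree $|d_r| = (r-1, r)$ is then read off from the cohomological indexing convention for tower spectral sequences: $d_r$ increases filtration by $r$ and decreases total stem by one. The main bookkeeping obstacle is ensuring that the $RO(G)$-graded coefficients $\pi_V^{G/e} H$ carry the correct $C_4$-module structure, so that the identification of $E_2$ with ordinary group cohomology of $C_4$ with coefficients in $\pi_V^{G/e} H$ holds literally as stated; once this is in place, everything else is a direct translation of the classical HFPSS construction into the $RO(G)$-graded setting.
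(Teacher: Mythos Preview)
Your proposal is correct and follows essentially the same approach as the paper: the paper constructs the spectral sequence by applying $\pi_V F(-,H\uZ)$ to the cellular filtration of $S(\infty\lambda)_+$ displayed just before the statement, citing \cite{Gre18} for the form, and later (for the analogous $\PP$-homotopy limit spectral sequence) spells out exactly the identification of $E_1$ and $d_1$ with the cochain complex computing group cohomology that you describe. You have simply filled in the details the paper leaves implicit.
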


This spectral sequence collapses at $E_2$ since $\pi_V^{G/e}H$ is concentrated in virtual representations with underlying degree $0$, i.e., it is non-zero only when $V=m(1-\alpha)+n(2-\lambda),m,n\in \Z$. Thus the strong convergence is guaranteed by \cite[Thm 7.1]{Boa99} and the remark below it.

When it happens that $V=m(1-\alpha)+n(2-\lambda),m,n\in \Z$, the $m$ even case amounts to compute the group cohomology ring 
\[
H^*(C_4;\Z)=\Z[x]/4x,|x|=2.
\]
The $m$ odd case amounts to compute the group cohomology $H^*(C_4;\widetilde{\Z})$ ($\gamma$ acts as $-1$ on $\widetilde{\Z}$) as a module over $H^*(C_4;\Z)$, it is 
\[
H^*(C_4;\widetilde{\Z})=\Z/2\langle y\rangle[x],|y|=1.
\]

This is because firstly it is $\Z/2$ in odd degrees from the periodic free resolution $F\xrightarrow{\epsilon}\Z$:
\[
\cdots\to\Z[C_4]\xrightarrow{1-\gamma}\Z[C_4]\xrightarrow{N=1+\gamma+\gamma^2+\gamma^3}\Z[C_4]\xrightarrow{1-\gamma}\Z[C_4]\xrightarrow{\epsilon}\Z\to 0.
\]

Secondly, the pairings
\[
H^{2i}(C_4;\Z)\otimes H^1(C_4;\widetilde{\Z})\xrightarrow{\cong} H^{2i+1}(C_4;\Z\otimes\widetilde{\Z})\cong H^{2i+1}(C_4;\widetilde{\Z}),i\geq 0
\]
coming from the diagonal approximation $\Delta:F\to F\otimes F$ are isomorphisms. The components of $\Delta$ are $\Delta_{pq}:F_{p+q}\to F_p\otimes F_q$. In this case it is \cite[Ch.5, p.108]{Brown}
\begin{equation}
	\Delta_{pq}(1)=
	\begin{cases}
		1\otimes 1,\quad p\,\, \text{even}\\
		1\otimes \gamma,\quad p\,\, \text{odd},q\,\,\text{even}\\
		\underset{0\leq i< j\leq 3}{\Sigma}\gamma^i\otimes\gamma^j,\quad p\,\, \text{odd},q\,\,\text{odd}.
	\end{cases}\label{diagapprox}
\end{equation}
Now we return to the HFPSS (\ref{HFPSS}). Firstly, the target $\pi_{\bigstar}H^h$ is $\uta,\ul$-local. Take $\uta$ as an example. We have the shearing isomorphism $G/e_+\wedge S^{2\alpha-2}\cong G/e_+$. The filtration quotients of the above cellular structure of $E{C_4}_+$ are $G/e_+\wedge S^k$. Multiplication by $\uta$ on $H^h$ is given by $S^{2-2\alpha}\wedge H^h\to H\wedge H^h\to H^h$, where the second map is the $H$-module structure of $H^h$. We have
\begin{equation*}
    S^{2-2\alpha}\wedge F(G/e_+\wedge S^k,H)\simeq F(G/e_+\wedge S^{2\alpha-2}\wedge S^k,H)\simeq F(G/e_+\wedge S^k,H).
\end{equation*}
From this we deduce $S^{2-2\alpha}\wedge F( S(k\lambda)_+,H)\cong F(S(k\lambda)_+,H)$ for all $k$ by induction. Passing to homotopy limits proves the claim.

Then
\[
E_2^{0,*}=H^*(C_4;\pi_0^{G/e}H)=\Z[x]/4x
\] 
converges to 
\[
\pi_*H^h=\Z[\frac{\al}{\ul}]/4\al,
\]
and
\[
E_2^{1-\alpha,*}=H^*(C_4;\pi_{1-\alpha}^{G/e}H)=\Z/2\langle y\rangle[x]
\] 
converges to 
\[
\pi_{*-\alpha}H^h=\Z/2\langle \aal\rangle[\frac{\al}{\ul}].
\]

By $\uta,\ul$-periodicity on both sides, we get
\begin{Prop}
The $C_4/C_4$-level of the $RO(C_4)$-graded coefficients of $H^h$ are
\begin{equation}
	H^h_{\bigstar}=\Z[\aal,\al,\uta^{\pm},\ul^{\pm}]/(\atal\ul-2\al\uta,2\aal,4\al).
\end{equation}
\end{Prop}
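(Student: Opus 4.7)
The plan is to bootstrap the two strip computations, together with the $\uta,\ul$-periodicity, into the full ring presentation. The key structural observation is that the HFPSS collapses at $E_2$, and since $\pi_V^{G/e}H$ is nonzero only when $|V|=0$, for each fixed target grading $W\in RO(C_4)$ exactly one bidegree $(V,s)$ contributes to $\pi_W H^h$ (namely $V=W+s$, with $s$ forced to equal $-|W|$). Consequently there are no multiplicative extensions, and $\pi_\bigstar H^h$ inherits its full ring structure directly from the $E_2$-page.

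Identifying the generators with their $E_2$ representatives, $\uta$ and $\ul$ are the units at $(V,s)=(2-2\alpha,0)$ and $(2-\lambda,0)$; $\aal$ corresponds to the generator $y\in H^1(C_4;\widetilde{\Z})$ at $(V,s)=(1-\alpha,1)$; and $\al/\ul$ corresponds to the polynomial generator $x\in H^2(C_4;\Z)$ at $(V,s)=(0,2)$. The two strip computations together with $\uta,\ul$-periodicity then determine the additive structure in every grading, and the relations $2\aal=0$ and $4\al=0$ are immediate from $H^*(C_4;\widetilde{\Z})=\Z/2\langle y\rangle[x]$ and $H^*(C_4;\Z)=\Z[x]/4x$.

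The remaining relation to establish is the gold relation $\atal\ul=2\al\uta$. On the $E_2$-page this amounts to the identity $y^2=2x\cdot\uta$ in $H^2(C_4;\widetilde{\Z}^{\otimes 2})\cong H^2(C_4;\Z)$, where the isomorphism is the orientation of $2\alpha$ and accounts for the shift by $\uta$. I would verify this by directly evaluating the cup product on the diagonal approximation (\ref{diagapprox}):
\[
y^2\bigl(\Delta_{1,1}(1)\bigr)=\sum_{0\leq i<j\leq 3}(-1)^{i+j}=-2\equiv 2\pmod{4}.
\]
Rewriting in terms of $\pi_\bigstar H^h$ yields $\aal^2=\atal=2\al\uta/\ul$, i.e., $\atal\ul=2\al\uta$.

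Finally, one checks that the presentation $\Z[\aal,\al,\uta^\pm,\ul^\pm]/(\atal\ul-2\al\uta,2\aal,4\al)$ exhausts all relations: using the gold relation to reduce any $\aal$-exponent above $1$, the ring splits (in the $\aal$-exponent) into the two strips $\pi_*H^h=\Z[\al/\ul]/4\al$ and $\pi_{*-\alpha}H^h=\Z/2\langle\aal\rangle[\al/\ul]$, extended by $\uta^\pm,\ul^\pm$, which matches the $E_2$-page grading by grading. The main obstacle is the gold-relation computation: although $y^2$ is elementary to evaluate via (\ref{diagapprox}), one must carefully track the identification $\widetilde{\Z}^{\otimes 2}\cong\Z$ and the resulting $\uta$-orientation shift, which is easy to miscount.
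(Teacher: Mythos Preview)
Your argument is correct and follows essentially the same approach as the paper: HFPSS collapse, $\uta,\ul$-periodicity, and reduction to the two strips. You add two useful refinements the paper leaves implicit: the observation that $s=-|W|$ is forced, so no hidden extensions can occur, and the explicit cup-product computation $y^2=2x$ via the diagonal approximation (\ref{diagapprox}), which the paper simply packages into the final relation without derivation.
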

It additively splits as
\begin{align*}
	H^h_{\bigstar}&=\Z[\al,\uta^{\pm},\ul^{\pm}]/4\al\\
	&\oplus\Z/2\langle \aal\rangle[\al,\uta^{\pm},\ul^{\pm}]
\end{align*}
since $\aal^2=\atal=2\frac{\al\uta}{\ul}$.

Then 
\begin{equation}
	H^t_{\bigstar}=H^h_{\bigstar}[\al^{-1}]=\Z/4[\al^{\pm},\aal,\uta^{\pm},\ul^{\pm}]/(\atal\ul=2\al\uta,2\aal)
\end{equation}
and taking kernels and cokernels of the map
\[
H^h_{\bigstar}\to H^t_{\bigstar}
\]
leaves us with the extension problem
\begin{equation*}
0 \to\left(\begin{array}{c} \Z/4\langle \Si\frac{1}{\al^i}\rangle_{i\geq 1}[\uta^{\pm},\ul^{\pm}]\\
\oplus\Z/2\langle \Si\frac{\aal}{\al^i}\rangle_{i\geq 1}[\uta^{\pm},\ul^{\pm}]\end{array}\right)\to {H_h}_{\bigstar}\to 4\Z[\uta^{\pm},\ul^{\pm}]\to 0.
\end{equation*}
It splits as the targets are $\Z$'s. Here $4\Z[\uta^{\pm},\ul^{\pm}]$ means its image in $H^h_{\bigstar}$ is $4$ times of the elements with the same names. We conclude:
\begin{Prop}
The $C_4/C_4$-level of the $RO(C_4)$-graded coefficients of $H_h$ are
\begin{equation}
	\begin{aligned}
		{H_h}_{\bigstar}&=4\Z[\uta^{\pm},\ul^{\pm}]\\
		&\oplus\Z/4\langle \Si\frac{1}{\al^i}\rangle_{i\geq 1}[\uta^{\pm},\ul^{\pm}]\\
		&\oplus\Z/2\langle \Si\frac{\aal}{\al^i}\rangle_{i\geq 1}[\uta^{\pm},\ul^{\pm}].
	\end{aligned}\label{H_h}
\end{equation}
\end{Prop}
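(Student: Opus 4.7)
The plan is to exploit the cofiber sequence $H_h \to H^h \to H^t$ from the bottom row of the main Tate square (\ref{Tate}), whose associated long exact sequence on $\pi_{\bigstar}^{C_4/C_4}$ reads
\[
\cdots \to \pi_{V+1}H^t \to \pi_V H_h \to \pi_V H^h \to \pi_V H^t \to \cdots.
\]
Splicing this yields a short exact sequence
\[
0 \to \mathrm{coker}\bigl(H^h_{\bigstar+1} \to H^t_{\bigstar+1}\bigr) \to {H_h}_{\bigstar} \to \ker\bigl(H^h_{\bigstar} \to H^t_{\bigstar}\bigr) \to 0,
\]
the grading shift in the cokernel term being the source of the $\Si = \Sigma^{-1}$ prefix appearing in the statement. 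Since $H^h_{\bigstar}$ and $H^t_{\bigstar} = H^h_{\bigstar}[\al^{-1}]$ have already been computed, the task reduces to identifying the kernel and cokernel of this localization map and then resolving the extension.

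Next I would decompose $H^h_{\bigstar}$ along its additive splitting $\Z[\al,\uta^{\pm},\ul^{\pm}]/4\al \,\oplus\, \Z/2\langle\aal\rangle[\al,\uta^{\pm},\ul^{\pm}]$ and analyze each summand separately under inversion of $\al$. On the first summand, $\al$ acts as a non-zero-divisor on the piece $\bigoplus_{i\geq 1}\Z/4\cdot \al^i$ and annihilates only the multiples of $4$ in the constant $\Z$-piece (because $4\al=0$); this identifies the $\al$-power torsion as $4\Z[\uta^{\pm},\ul^{\pm}]$ and the localization cokernel as $\Z/4\langle 1/\al^i\rangle_{i\geq 1}[\uta^{\pm},\ul^{\pm}]$. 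On the second summand $\al$ acts freely, so there is no kernel, while the cokernel is $\Z/2\langle \aal/\al^i\rangle_{i\geq 1}[\uta^{\pm},\ul^{\pm}]$.

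Assembling these contributions with the grading shift produces precisely the short exact sequence displayed just before the statement. The final step is the observation that the quotient $4\Z[\uta^{\pm},\ul^{\pm}]$ is free abelian, hence projective over $\Z$, so the extension splits canonically and yields the claimed direct-sum decomposition. I do not anticipate a significant obstacle: the only bookkeeping to verify carefully is that the $1/\al^i$ and $\aal/\al^i$ summands arise from the cokernel in degree $V+1$ and hence pick up the $\Si$ prefix when landing in $\pi_V H_h$; everything else is formal manipulation with the explicit presentation of $H^h_{\bigstar}$.
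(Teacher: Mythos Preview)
Your proposal is correct and follows essentially the same approach as the paper: both compute the kernel and cokernel of the localization map $H^h_{\bigstar}\to H^t_{\bigstar}=H^h_{\bigstar}[\al^{-1}]$ via the additive splitting of $H^h_{\bigstar}$, obtain the same short exact sequence, and observe that it splits because the quotient $4\Z[\uta^{\pm},\ul^{\pm}]$ is free abelian. Your explicit identification of the $\al$-torsion as $4\Z[\uta^{\pm},\ul^{\pm}]$ and your tracking of the $\Si$ shift on the cokernel piece are exactly what the paper does implicitly.
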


\section{The \texorpdfstring{$\PP$}{pp}-Tate square and the \texorpdfstring{$\PP$}{pp}-homotopy limit spectral sequence}\label{methodone}
\subsubsection{The \texorpdfstring{$\PP$-homotopy limit spectral sequence}{PP}}

Take $\PP=\{e,C_2\}$, the family of proper subgroups. We have following generalized Tate square:
\begin{equation}
	\xymatrix{
		E\PP_+\wedge \HPt\ar[d]_{\simeq}\ar[r] &\HPt\ar[d]\ar[r] &\HPf\ar[d]\\
		E\PP_+\wedge F(E\PP_+,\HPt)\ar[r] &F(E\PP_+,\HPt)\ar[r] &F(E\PP_+,\HPt)[\aal^{-1}]
	}.\label{FTate2}
\end{equation}

We start the computation from the homotopy of $F(E\PP_+,\HPt)$. Similiar to the homotopy fixed point spectral sequence, we can filter $E\PP_+\simeq S(\infty\alpha)_+$ to get a spectral sequence which converges to the homotopy of $F(E\PP_+,\HPt)$. We will call it the $\PP$-homotopy limit spectral sequence. It is constructed as follows:

We have $E\PP_+\simeq S(\infty\alpha)_+\simeq \varepsilon^{*}EC_2'$, where $\varepsilon:G\to C_2'$ is the natural projection and ${EC_2}'$ is regarded as a $G$-space through this map. Sometimes we suppress the notation and regard $EC_2'$ as a $G$-space without $\varepsilon^*$. The cellular filtration is of the form
\[\xymatrixcolsep{1pc}
\xymatrix{
	{*}\ar[rr] &&S(\alpha)_+\ar[ld]_{\simeq}\ar[rr] &&S(2\alpha)_+\ar[ld]\ar[rr] &&S(3\alpha)_+\ar[ld]\ar[rr]&&\cdots\\
	&G/{C_2}_+\ar@{-->}[ul] &&G/{C_2}_+\wedge S^1\ar@{-->}[ul] &&G/{C_2}_+\wedge S^2\ar@{-->}[ul]
}.
\]
Applying $F(-,\HPt)$ we get
{\tiny
\begin{equation}
	\xymatrixcolsep{0.3pc}
	\xymatrix@C-1em{
		{*}\ar@{-->}[rd]&&F(S(\alpha)_+,\HPt)\ar[ll]\ar@{-->}[dr] &&F(S(2\alpha)_+,\HPt)\ar[ll]\ar@{-->}[rd]&&F(S(3\alpha)_+,\HPt)\ar[ll]\\
		&F(G/{C_2}_+,\HPt)\ar[ur]^{\simeq} &&F(G/{C_2}_+\wedge S^1,\HPt)\ar[ur]&&F(G/{C_2}_+\wedge S^2,\HPt)\ar[ur]
	},\label{tower}
\end{equation}
}
from which we get a spectral sequence with
\[
E_1^{V,s}=\pi_{V-s}(F(G/{C_2}_+\wedge S^s,\HPt))\Rightarrow\pi_{V-s}F(E\PP_+,\HPt),V\in RO(G).
\] 
The strong convergence is again guaranteed by \cite[Thm 7.1]{Boa99} and the remark below it, after we prove this spectral sequence collapses at $E_4$. $d_1$ is given by the following diagram:
{\scriptsize
\[
\xymatrix{
	[G/{C_2}_+,S^{-V}\wedge\HPt]^G\ar[r]^-{d_1}\ar[d]^{\cong}&[\Sigma^{-1}G/{C_2}_+\wedge S^1,S^{-V}\wedge\HPt]^G\ar[r]^-{d_1}\ar[d]^{\cong}&[\Sigma^{-2}G/{C_2}_+\wedge S^2,S^{-V}\wedge\HPt]^G\ar[r]^-{d_1}\ar[d]^{\cong}&\cdots\\
	\underline{\pi_0}(S^{-V}\wedge \HPt)(G/{C_2})\ar[r]^-{1-\gamma}&\underline{\pi_0}(S^{-V}\wedge \HPt)(G/{C_2})\ar[r]^-{1+\gamma}&\underline{\pi_0}(S^{-V}\wedge \HPt)(G/{C_2})\ar[r]^-{1-\gamma}&\cdots\\
}
\]
}
By \cite[3.3, p.38]{HHRa}, this is the cochain $C^*_G(EC_2';\underline{\pi_V}(\HPt))$, thus we get 
\begin{Thm} The $\PP$-homotopy limit spectral sequence takes the form
\begin{equation}
	E_2^{V,s}=H^{s}_G(EC_2';\underline{\pi_V}\HPt)\cong H^s(C_2';{\pi_V^{G/C_2}}\HPt)\Rightarrow \pi_{V-s}(F(E\PP_+,\HPt)).\label{PholimSS}
\end{equation}
\end{Thm}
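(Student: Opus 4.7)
The plan follows the path already sketched in the diagrams immediately preceding the statement. First I would filter $E\PP_+ \simeq S(\infty\alpha)_+$ by the equivariant skeleta $S(n\alpha)_+$, whose successive cofibers are $G/{C_2}_+ \wedge S^n$; this is the skeletal filtration of $EC_2'$ pulled back along $\varepsilon\colon G \to C_2'$. Applying the contravariant functor $F(-,\HPt)$ reverses the arrows and produces the tower (\ref{tower}), and extracting $\pi_{V-s}^{G/G}$ of the filtration quotients gives the $E_1$-page
$$E_1^{V,s} \;=\; \pi_{V-s}^{G/G} F(G/{C_2}_+ \wedge S^s,\, \HPt).$$
The Wirthm\"uller isomorphism (or equivalently the free-forgetful adjunction) together with the self-duality $D(G/{C_2}_+) \simeq G/{C_2}_+$ identifies each such group canonically with $\pi_V^{G/C_2}(\HPt) = \underline{\pi_V}\HPt(G/C_2)$, independently of $s$.

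The step needing care is the identification of $d_1$. The attaching maps in the standard cellular model of $EC_2'$ induce, on consecutive $G/{C_2}_+$-summands, the alternating multiplications by $1-\gamma$ and $1+\gamma$; after applying $F(-,\HPt)$ and passing to homotopy, these become exactly the alternating Weyl-action maps on $\pi_V^{G/C_2}\HPt$, as displayed in the diagram just above the theorem. This is precisely the cochain complex $C_G^*(EC_2';\,\underline{\pi_V}\HPt)$ of \cite[3.3, p.38]{HHRa}, whose cohomology is the Bredon cohomology of $EC_2'$ with the relevant coefficient system. Because only the orbit type $G/C_2$ occurs in this filtration, the Bredon complex collapses to the ordinary group-cohomology complex of $C_2' = W_G C_2$ with coefficients in the abelian group $\pi_V^{G/C_2}\HPt$, which delivers the stated $E_2$-term $H^s(C_2';\,\pi_V^{G/C_2}\HPt)$.

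Finally, strong convergence is invoked from \cite[Thm 7.1]{Boa99} and the remark following it; its hypotheses are satisfied once we know the spectral sequence has a horizontal vanishing line from some finite page onward. This will be verified later in the paper when the differentials are computed and collapse at $E_4$ is established, so until then the convergence assertion is conditional on that calculation. The substantive content of the theorem is really the $E_2$-identification, and the main delicate point is ensuring that the alternating signs $1 \pm \gamma$ produced by the attaching maps match the standard Bredon/group-cohomology differential rather than being off by a twist; this is exactly what the cellular model of $EC_2'$ guarantees.
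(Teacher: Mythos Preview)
Your proposal is correct and follows essentially the same route as the paper: filter $E\PP_+\simeq S(\infty\alpha)_+$ cellularly, apply $F(-,\HPt)$, identify $E_1$ with the cochain complex $C^*_G(EC_2';\underline{\pi_V}\HPt)$ via the alternating $1\pm\gamma$ differentials, and invoke \cite[Thm~7.1]{Boa99} for convergence once the $E_4$-collapse is established later. The only minor difference is that for the second isomorphism $H^s_G(EC_2';\underline{\pi_V}\HPt)\cong H^s(C_2';\pi_V^{G/C_2}\HPt)$ the paper simply cites \cite[Prop~23.2]{GM95}, whereas you argue it directly from the fact that only the orbit type $G/C_2$ appears in the filtration; both are valid and amount to the same observation.
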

The second identification of $E_2$ with the cohomology of $C_2'$ is proved in \cite[Prop 23.2, p.129]{GM95}. For the coefficient ${\pi_V^{G/C_2}}\HPt$ as a $C_2'$-module, we use the isomorphism
\[
\pi_V^{G/C_2}\HPt\cong\pi^{C_2/C_2}_{i^*_{C_2}V}(H[\ats^{-1}]).
\]
The right hand side is written in (\ref{C2}).
From this we know the left-hand side is
\[
\pi_{\bigstar}^{G/C_2}\HPt=\Z/2[\overline{\al}^{\pm},\overline{\ul}][\eal^{\pm}].
\]
Since the entire ring is over $\Z/2$, the $C_2'$-action is trivial.
\begin{Lem}
The $E_2$-page of the $\PP$-homotopy limit spectral sequence (\ref{PholimSS}) is
\begin{equation}
	\begin{split}
		&E_2^{\bigstar,*}=\Z/2[\overline{\al}^{\pm},\overline{\ul},\eal^{\pm},b]\quad\text{with}\quad |d_r|=(r-1,r),\\
		&|b|=(0,1),|\overline{\al}|=(-\lambda,0),|\overline{\ul}|=(2-\lambda,0), |e_{\alpha}|=(\alpha-1,0).
	\end{split}
\end{equation}
\end{Lem}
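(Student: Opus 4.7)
The plan is to assemble the $E_2$-page from the coefficient ring $\pi_V^{G/C_2}\HPt$ already identified and a standard mod-$2$ group cohomology calculation for $C_2'$. Since the coefficients form the $\Z/2$-algebra $\pi_{\bigstar}^{G/C_2}\HPt=\Z/2[\overline{\al}^{\pm},\overline{\ul},\eal^{\pm}]$ in which every $RO(C_4)$-graded component is either $0$ or $\Z/2$, and since the Weyl $C_2'$-action has just been shown to be trivial (an algebra automorphism of a one-dimensional $\Z/2$-vector space in each degree must be the identity), the computation $H^s(C_2';\pi_V^{G/C_2}\HPt)$ reduces to $H^s(C_2';\Z/2)$ for trivially acting $C_2'$ coefficients, tensored up.

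The concrete step is to plug the standard periodic free resolution
\[
\cdots\to\Z[C_2']\xrightarrow{1-\gamma}\Z[C_2']\xrightarrow{1+\gamma}\Z[C_2']\xrightarrow{1-\gamma}\Z[C_2']\to\Z
\]
into $\mathrm{Hom}_{\Z[C_2']}(-,M)$ for a trivially acting $\Z/2$-module $M$. Both $1-\gamma$ (because $\gamma$ acts trivially) and $1+\gamma=2$ (because we are mod $2$) become the zero map, so every differential in the resulting cochain complex vanishes. This gives $H^s(C_2';M)=M$ for all $s\geq 0$, and specializing $M=\Z/2$ with its cup product yields $H^*(C_2';\Z/2)=\Z/2[b]$ with $|b|=1$.

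Finally, because the $\PP$-homotopy limit spectral sequence is multiplicative (inherited from the pairing on the cellular filtration of $E\PP_+\simeq S(\infty\alpha)_+$ using $E\PP_+\wedge E\PP_+\simeq E\PP_+$) and each $\pi_V^{G/C_2}\HPt$ is a $\Z/2$-vector space, extension of scalars gives
\[
E_2^{\bigstar,*}\;=\;\pi_{\bigstar}^{G/C_2}\HPt\otimes_{\Z/2}H^*(C_2';\Z/2)\;=\;\Z/2[\overline{\al}^{\pm},\overline{\ul},\eal^{\pm},b].
\]
The $RO(C_4)$-bidegrees of $\overline{\al},\overline{\ul},\eal$ are inherited from the grading on the coefficient ring, and $b$ contributes $(0,1)$ cohomologically, exactly as claimed. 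The only subtle point is the multiplicativity of the spectral sequence together with the extension-of-scalars step; the latter is clean precisely because the coefficients are free $\Z/2$-modules in each degree, so no higher Tor-type obstructions appear.
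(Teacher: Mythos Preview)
Your proposal is correct and follows essentially the same approach as the paper. The paper's justification is terse: it records the coefficient ring $\pi_{\bigstar}^{G/C_2}\HPt=\Z/2[\overline{\al}^{\pm},\overline{\ul}][\eal^{\pm}]$, remarks that ``since the entire ring is over $\Z/2$, the $C_2'$-action is trivial,'' and then states the Lemma without further argument; you have simply filled in the group-cohomology computation $H^*(C_2';\Z/2)=\Z/2[b]$ and the extension-of-scalars step that the paper leaves implicit.
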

Since each term in (\ref{tower}) is a $\HPt$-module through the maps 
\[
F(S(k\alpha)_+,\HPt)\xleftarrow{}F(S(\infty\alpha)_+,\HPt)\xleftarrow{}F(S^0,\HPt)\simeq \HPt,
\]
the spectral sequence is a spectral sequence of $\HPt_{\bigstar}$-modules. Thus $d_r$ is $\al^{\pm},\ul,\uta$-linear. Here these elements in $\HPt_{\bigstar}$ act on $E_2$ through the restriction map $\res:\pi_V^{G/G}\HPt\to \pi_V^{G/C_2}\HPt$. Since everything is also a $F(S(\infty\alpha)_+,\HPt)$-module, $d_r$ is also $\uta^{-1}=e_{\alpha}^2$-linear. Because as in the last section, using the shearing isomorphism $G/{C_2}_+\wedge S^{2-2\alpha}\cong G/{C_2}_+$, we can show that $F(E\F_+,\HPt)$ is $\uta$-local. Thus in the following, we will use $e_{2\alpha}^{\pm}=\uta^{\mp}$ interchangeably. This spectral sequence also has a multiplicative structure as explained in \cite[p.5]{Gre18}, because of the paring of representation spheres $S^V\wedge S^W\to S^{V+W}$.

\subsubsection{The homotopy of \texorpdfstring{$\HPt$}{HPt}}

Next we consider the differentials. As computed in the following proposition, $b^2\in E_{\infty}$, which implies that $d_r$ is $b^2$-linear too. So the only cases we need to consider are 
\[
d_r(1),d_r(b),d_r(\eal),d_r(\eal b).
\]

\begin{Prop}
The only non-trivial differentials among the four generators listed above are
\[
d_3(b)=\frac{\overline{\ul}}{\overline{\al}}b^4\quad\text{and}\quad d_3(\eal)=\frac{\overline{\ul}}{\overline{\al}}b^3\eal.
\]
\end{Prop}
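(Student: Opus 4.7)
The plan is to proceed in three stages, using the multiplicativity of the spectral sequence and the $\HPt_\bigstar$-linearity of its differentials.

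First, since $\overline{\al}^{\pm 1}, \overline{\ul}, \eal^{\pm 2}$ are permanent cycles (they come from the image of $\HPt_\bigstar$ and the $\uta$-periodicity noted above the proposition), $d_r$ is linear over the subring they generate, so it suffices to evaluate $d_r$ on the four generators $1, b, \eal, \eal b$. A direct $RO(G)$-degree count shows $\pi^{G/C_2}_1\HPt = 0 = \pi^{G/C_2}_\alpha\HPt$: for a monomial $\overline{\al}^a\overline{\ul}^{b'}\eal^c$, matching the $\alpha$-coordinate of its $RO(G)$-degree forces $c = 0$ (resp.\ $c = 1$), and matching the integer coordinate then demands $2b'$ to be odd, impossible over $\Z$. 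Hence the targets of $d_2(b)$ and $d_2(\eal)$ vanish; together with $d_2(1) = 0$ and the Leibniz rule, $d_2 \equiv 0$ and $E_3 = E_2$.

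Second, the same bookkeeping at $E_3$ shows the targets of the potential $d_3(b)$ and $d_3(\eal)$ are the rank-one $\Z/2$-modules $\pi^{G/C_2}_2\HPt = \Z/2\bigl\{\overline{\al}^{-1}\overline{\ul}\bigr\}$ and $\pi^{G/C_2}_{\alpha+1}\HPt = \Z/2\bigl\{\overline{\al}^{-1}\overline{\ul}\eal\bigr\}$ (each with a unique monomial of the required degree). Thus $d_3(b) = x\cdot \frac{\overline{\ul}}{\overline{\al}} b^4$ and $d_3(\eal) = y\cdot \frac{\overline{\ul}}{\overline{\al}} \eal b^3$ for two bits $x, y \in \Z/2$ to determine.

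Third, and this is the heart of the proof, I would show $x = y = 1$ by tracking classes through the $\PP$-Tate cofiber sequence $\HPt \to F(E\PP_+, \HPt) \to \HPf$ and comparing with the known $\HPf_\bigstar = \Z/2[\uta, \aal^{\pm}, \al^{\pm}]$ from (\ref{geometric}). Since $\alpha|_{C_2}$ is trivial, $\res(\aal) = 0$ and $\aal$ is not detected at $s = 0$; yet $\aal$ maps to the invertible generator of $\HPf_\bigstar$ and so must survive at some $s \geq 1$. The only candidates by $RO(G)$-degree are $u_\alpha b$ at $s = 1$ and $\frac{\overline{\ul}}{\overline{\al}} u_\alpha b^3$ at $s = 3$. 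Using $u_\alpha \eal = 1$ together with Leibniz I would deduce $d_3(u_\alpha b) = (x+y)\frac{\overline{\ul}}{\overline{\al}} u_\alpha b^4$; combined with the inherited relation $2\aal = 0$ in $\HPt_\bigstar$ (whose detection class is forced to filtration $s = 2$), this excludes $(x,y) = (0,0)$. A second independent test on, say, the product $\atal = \aal^2$, whose image in $\HPf_\bigstar$ is the invertible class $\aal^2$, then pins down both bits individually. Finally, once $x = y = 1$ is known, a routine $RO(G)$-degree check handles all higher $d_r$'s ($r \geq 4$) on $1$ and $\eal b$: the relevant target groups either vanish by the earlier count or are already $d_3$-boundaries.

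The principal obstacle is the third stage: degree considerations alone cannot distinguish the four cases $(x,y) \in \{0,1\}\times\{0,1\}$, and Leibniz over $\Z/2$ does not directly link $x$ and $y$. Two independent inputs from the Tate square are required. Should the direct detection argument prove delicate, my fallback is to invoke the localization theorem from \cite{LocThm} identifying $F(E\PP_+, H) \simeq H[\uta^{-1}]$, from which $\pi_\bigstar F(E\PP_+, \HPt)$ can be read off using the main Tate square of Section~2, and the differentials then reconstructed by comparison.
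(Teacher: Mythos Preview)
Your first two stages are correct and are a clean way to set things up: the $d_2$-vanishing and the narrowing of $d_3(b),d_3(\eal)$ to single bits $x,y\in\Z/2$ follow exactly as you say.

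The third stage, however, has a genuine gap. First, there is no cofiber sequence $\HPt\to F(E\PP_+,\HPt)\to\HPf$; the rows of the $\PP$-Tate square are $E\PP_+\wedge\HPt\to\HPt\to\HPf$ and $E\PP_+\wedge F\to F\to F[\aal^{-1}]$, with a vertical comparison map $\HPt\to F$. More seriously, the substantive claim ``$\aal$ must be detected in $E_\infty$'' presupposes that $\aal\neq 0$ in $\pi_{-\alpha}F(E\PP_+,\HPt)$, which you have not established independently of the spectral sequence you are trying to run. Your ``second independent test on $\atal$'' is not spelled out, and in fact $d_3(\etal b^2)=0$ regardless of $x,y$ (Leibniz over $\Z/2$), so $\atal$ gives no new constraint. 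The Leibniz computation $d_3(u_\alpha b)=(x+y)\frac{\overline{\ul}}{\overline{\al}}u_\alpha b^4$ only tells you $x+y$, not $x$ and $y$ separately; and even to use that, you still need an external input fixing one bit. Your fallback to the localization theorem of \cite{LocThm} is circular here: the paper explicitly says that theorem was \emph{motivated} by this very computation, and in any case applying it would require already knowing $\pi_\bigstar H$ or $\pi_\bigstar\HPt$.

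The paper's argument is quite different. Rather than comparing with $\HPf$, it computes the groups $\pi_{k\alpha-2}\HPt$ and $\pi_{k\alpha-3}\HPt$ for $k\ge 2$ directly, by cellular induction along the cofiber sequences $G/{C_2}_+\to S^0\to S^\alpha$ (this is the content of the subsubsection \ref{dod}). The key input is $\pi_*\HPt=\Z/4[\ul/\al]$, already available from the main Tate square of Section~2, together with the gold relation, which produces a $\Z/4$ in $\pi_{4\alpha-2}\HPt$. One then chases $b$, $b^2$, $\eal$, $\eal b$ through the tower $\cdots\leftarrow F(S(k\alpha)_+,\HPt)\leftarrow\cdots$ and reads off exactly where lifting is obstructed; the obstruction for $b$ and $\eal$ occurs at $k=4$ (giving $d_3$), while $b^2$ and $\eal b$ lift all the way. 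This is concrete and avoids any circularity. If you want to salvage your approach, the cleanest missing ingredient is precisely this: compute $\pi_{4\alpha-2}\HPt\cong\Z/4$ independently (it only needs Section~2 and two steps of induction), and then your $x=y=1$ follows.
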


\begin{proof}
Since we are dealing with a spectral sequence of algebras, $1$ is always a permanent cycle. For degree reason, it cannot be hit by any differential, thus surives to $E_{\infty}^{0,0}$.

For $b$, in the following diagram, $b\in [\Sigma^{-1}G/{C_2}_+\wedge S^1,\HPt]^G$ maps to $[S(2\alpha)_+\wedge S^{-1},\HPt]^G$ and cannot be lifted to $[S(5\alpha)_+\wedge S^{-1},\HPt]^G$ because of (\ref{drb}).
{\scriptsize
\begin{equation}
	\xymatrixcolsep{1pc}
	\xymatrix{
		[S^{-2},\HPt]^G=0 &[S^{-2},\HPt]^G=0\ar[l]&[S^{-2},\HPt]^G=0\ar[l]&[S^{-2},\HPt]^G=0\ar[l]&\cdots\ar[l]\\
		[S^{2\alpha-2},\HPt]^G\ar[u]&[S^{3\alpha-2},\HPt]^G\ar[u]\ar[l]^{\aal}_{\cong}&[S^{4\alpha-2},\HPt]^G\ar[u]\ar[l]^{\aal}&[S^{5\alpha-2},\HPt]^G\ar[u]\ar[l]^{\aal}&\cdots\ar[l]\\
		[S(2\alpha)_+\wedge S^{-1},\HPt]^G\ar[u]^{\cong}&[S(3\alpha)_+\wedge S^{-1},\HPt]^G\ar[u]^{\cong}\ar[l]_{\cong}&[S(4\alpha)_+\wedge S^{-1},\HPt]^G\ar[u]^{\cong}\ar[l]&[S(5\alpha)_+\wedge S^{-1},\HPt]^G\ar[l]\ar[u]^{\cong}&\cdots\ar[l]\\
		[S^{-1},\HPt]^G=0\ar[u] &[S^{-1},\HPt]^G=0\ar[l]\ar[u]&[S^{-1},\HPt]^G=0\ar[l]\ar[u]&[S^{-1},\HPt]^G=0\ar[u]\ar[l]&\cdots\ar[l]
	}.
\end{equation}
}
Here the vertical exact sequences are derived from the cofiber sequences 
\[
S(k\alpha)_+\to S^0\to S^{k\alpha}.
\] 
The horizontal sequences (not exact) are derived from the following commutative diagram 
\[
\xymatrix{
	S(k\alpha)_+\ar[r]\ar[d] &S^0\ar[r]\ar[d] &S^{k\alpha}\ar[d]\\
	S((k+1)\alpha)_+\ar[r] &S^0\ar[r] &S^{(k+1)\alpha}
}.
\]
That the sequence in the second row (thus the third row) looks like the following is done by induction in subsubsection (\ref{dod}).
\begin{equation}
\xymatrix{
	[S^{2\alpha-2},\HPt]^G\ar[d]_{\cong}&[S^{3\alpha-2},\HPt]^G\ar[d]_{\cong}\ar[l]^{\aal}_{\cong}&[S^{4\alpha-2},\HPt]^G\ar[d]_{\cong}\ar[l]^{\aal}&[S^{5\alpha-2},\HPt]^G\ar[d]_{\cong}\ar[l]^{\aal}&\cdots\ar[l]\\
	\Z/2&\Z/2\ar[l]^1&\Z/4\ar[l]^1&\Z/2\ar[l]^2&\cdots\ar[l]
}\label{drb}
\end{equation}
Thus we get the differential
\[
d_3(b)=\frac{\overline{\ul}}{\overline{\al}}b^4.
\]

Similarly, we show $d_r(b^2)=0,r\geq 2$ by the following diagram. Note $b^2\in [\Sigma^{-2}G/{C_2}_+\wedge S^2,\HPt]^G$ maps to $[S(3\alpha)_+\wedge S^{-2},\HPt]^G$ and then can be lifted along the infinite sequence of isomorphisms. For degree reason, it cannot be hit by a differential. So $b^2\in E_{\infty}^{0,2}$.
{\scriptsize
\begin{equation}
	\xymatrixcolsep{1pc}
	\xymatrix{
		[S^{-3},\HPt]^G=0 &[S^{-3},\HPt]^G=0\ar[l]&[S^{-3},\HPt]^G=0\ar[l]&[S^{-3},\HPt]^G=0\ar[l]&\cdots\ar[l]\\
		[S^{2\alpha-3},\HPt]^G=0\ar[u]&[S^{3\alpha-3},\HPt]^G\ar[u]\ar[l]^-{\aal}&[S^{4\alpha-3},\HPt]^G\ar[u]\ar[l]^{\aal}_{\cong}&[S^{5\alpha-3},\HPt]^G\ar[u]\ar[l]^{\aal}_{\cong}&\cdots\ar[l]_-{\cong}\\
		[S(2\alpha)_+\wedge S^{-2},\HPt]^G\ar[u]^{\cong}&[S(3\alpha)_+\wedge S^{-2},\HPt]^G\ar[u]^{\cong}\ar[l]&[S(4\alpha)_+\wedge S^{-2},\HPt]^G\ar[u]^{\cong}\ar[l]&[S(5\alpha)_+\wedge S^{-2},\HPt]^G\ar[l]\ar[u]^{\cong}&\cdots\ar[l]\\
		[S^{-2},\HPt]^G=0\ar[u] &[S^{-2},\HPt]^G=0\ar[l]\ar[u]&[S^{-2},\HPt]^G=0\ar[l]\ar[u]&[S^{-2},\HPt]^G=0\ar[u]\ar[l]&\cdots\ar[l]
	}.
\end{equation}
}
That
\begin{equation}
	\xymatrix{
		[S^{2\alpha-3},\HPt]^G=0\ar[d]_{\cong}&[S^{3\alpha-3},\HPt]^G\ar[d]_{\cong}\ar[l]^{\aal}&[S^{4\alpha-3},\HPt]^G\ar[d]_{\cong}\ar[l]^{\aal}_{\cong}&[S^{5\alpha-3},\HPt]^G\ar[d]_{\cong}\ar[l]^{\aal}_{\cong}&\cdots\ar[l]_-{\cong}\\
		0&\Z/2\ar[l]&\Z/2\ar[l]_{\cong}&\Z/2\ar[l]_{\cong}&\cdots\ar[l]_{\cong}
	}\label{b^2}
\end{equation}
is an infinite sequence of isomorphism of $\Z/2$'s from the second term on is also proved by induction in subsubsection (\ref{dod}).

Now for $e_{\alpha}\in [G/{C_2}_+\wedge S^{\alpha-1},\HPt]^G$, its image in $[S(\alpha)_+\wedge S^{\alpha-1},\HPt]^G$ cannot be lifted to 
$[S(4\alpha)_+\wedge S^{\alpha-1},\HPt]^G$, which implies the differential 
\[
d_3(\eal)=b^3\frac{\overline{\ul}}{\overline{\al}}\eal.
\]
{\tiny
\begin{equation}
	\xymatrixcolsep{1pc}
	\xymatrix{
		[S^{\alpha-2},\HPt]^G=0 &[S^{\alpha-2},\HPt]^G=0\ar[l]&[S^{\alpha-2},\HPt]^G=0\ar[l]&[S^{\alpha-2},\HPt]^G=0\ar[l]&\cdots\ar[l]\\
		[S^{2\alpha-2},\HPt]^G\ar[u]&[S^{3\alpha-2},\HPt]^G\ar[u]\ar[l]^{\aal}_{\cong}&[S^{4\alpha-2},\HPt]^G\ar[u]\ar[l]^{\aal}&[S^{5\alpha-2},\HPt]^G\ar[u]\ar[l]^{\aal}&\cdots\ar[l]\\
		[S(\alpha)_+\wedge S^{\alpha-1},\HPt]^G\ar[u]^{\cong}&[S(2\alpha)_+\wedge S^{\alpha-1},\HPt]^G\ar[u]^{\cong}\ar[l]_{\cong}&[S(3\alpha)_+\wedge S^{\alpha-1},\HPt]^G\ar[u]^{\cong}\ar[l]&[S(4\alpha)_+\wedge S^{\alpha-1},\HPt]^G\ar[l]\ar[u]^{\cong}&\cdots\ar[l]\\
		[S^{\alpha-1},\HPt]^G=0\ar[u] &[S^{\alpha-1},\HPt]^G=0\ar[l]\ar[u]&[S^{\alpha-1},\HPt]^G=0\ar[l]\ar[u]&[S^{\alpha-1},\HPt]^G=0\ar[u]\ar[l]&\cdots\ar[l]
	}.
\end{equation}
}
The second row (thus the third row) is
\[
\xymatrix{
	[S^{2\alpha-2},\HPt]^G\ar[d]_{\cong}&[S^{3\alpha-2},\HPt]^G\ar[d]_{\cong}\ar[l]^{\aal}_{\cong}&[S^{4\alpha-2},\HPt]^G\ar[d]_{\cong}\ar[l]^{\aal}&[S^{5\alpha-2},\HPt]^G\ar[d]_{\cong}\ar[l]^{\aal}&\cdots\ar[l]\\
	\Z/2&\Z/2\ar[l]_1&\Z/4\ar[l]_1&\Z/2\ar[l]_2&\cdots\ar[l]
}
\]

For $\eal b$, we have
{\tiny
\begin{equation}
	\xymatrixcolsep{1pc}
	\xymatrix{
		[S^{\alpha-3},\HPt]^G=0 &[S^{\alpha-3},\HPt]^G=0\ar[l]&[S^{\alpha-3},\HPt]^G=0\ar[l]&[S^{\alpha-3},\HPt]^G=0\ar[l]&\cdots\ar[l]\\
		[S^{2\alpha-3},\HPt]^G=0\ar[u]&[S^{3\alpha-3},\HPt]^G\ar[u]\ar[l]^{\aal}&[S^{4\alpha-3},\HPt]^G\ar[u]\ar[l]^{\aal}_{\cong}&[S^{5\alpha-3},\HPt]^G\ar[u]\ar[l]^{\aal}_{\cong}&\cdots\ar[l]\\
		[S(\alpha)_+\wedge S^{\alpha-2},\HPt]^G\ar[u]^{\cong}&[S(2\alpha)_+\wedge S^{\alpha-2},\HPt]^G\ar[u]^{\cong}\ar[l]&[S(3\alpha)_+\wedge S^{\alpha-2},\HPt]^G\ar[u]^{\cong}\ar[l]&[S(4\alpha)_+\wedge S^{\alpha-2},\HPt]^G\ar[l]\ar[u]^{\cong}&\cdots\ar[l]\\
		[S^{\alpha-2},\HPt]^G=0\ar[u] &[S^{\alpha-2},\HPt]^G=0\ar[l]\ar[u]&[S^{\alpha-2},\HPt]^G=0\ar[l]\ar[u]&[S^{\alpha-2},\HPt]^G=0\ar[u]\ar[l]&\cdots\ar[l]
	}.
\end{equation}
}
The situation is similar to $b^2$. The two rows in the middle are the same as (\ref{b^2}). So we have $\eal b\in E_{\infty}^{\alpha-1,1}$ for degree reason.
\end{proof}

As a result, we have
\begin{Cor}
The $E_{\infty}$-page of (\ref{PholimSS}) is
\begin{equation}
	\begin{aligned}
		E_{\infty}^{\bigstar,*}=E_4^{\bigstar,*}&=\Z/2[\overline{\al}^{\pm},\overline{\ul},e_{2\alpha}^{\pm},b^2]\langle 1,\eal b\rangle/(b^3\overline{\ul}\eal,b^4\overline{\ul})\\
		&=\Z/2[\overline{\al}^{\pm},e_{2\alpha}^{\pm},b^2]\langle 1,\eal b\rangle\\
		&\oplus\Z/2[\overline{\al}^{\pm},e_{2\alpha}^{\pm}]\langle \overline{\ul}^{i\geq 1}\rangle\langle 1,\eal b,b^2\rangle.
	\end{aligned}
\end{equation}
\end{Cor}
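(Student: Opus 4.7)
The plan is to run the spectral sequence from the known data on $b$ and $\eal$. Concretely, I would (i) verify $E_3 = E_2$, (ii) propagate $d_3$ from the generators to everything using the Leibniz rule and $\HPt_{\bigstar}$-linearity, (iii) compute $E_4$ as $\ker d_3 / \operatorname{im} d_3$, and (iv) show that no higher differentials can occur.

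For step (i), every monomial in $E_2 = \Z/2[\overline{\al}^{\pm}, \overline{\ul}, e_{2\alpha}^{\pm}, \eal^{\pm}, b]$ has its $(\text{integer}) + (\alpha\text{-coefficient})$ part of its $RO(G)$-grading equal to $2c$, where $c$ is the $\overline{\ul}$-exponent. Since $d_2$ has bidegree $(1,2)$, its target monomial would have this parity flipped, forcing $2c$ to be odd, which is impossible. Hence $d_2 = 0$ and $E_3 = E_2$.

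For steps (ii) and (iii), the restriction $\HPt_{\bigstar} \to \pi^{G/C_2}_{\bigstar}\HPt$ makes the classes $\overline{\al}^{\pm}, \overline{\ul}, e_{2\alpha}^{\pm}$ permanent cycles, so $d_3$ vanishes on them. Combined with $d_3(b) = \frac{\overline{\ul}}{\overline{\al}} b^4$ and $d_3(\eal) = \frac{\overline{\ul}}{\overline{\al}} b^3 \eal$ from the proposition, the Leibniz rule over $\Z/2$ yields
\[
d_3\!\left(\overline{\al}^a \overline{\ul}^c e_{2\alpha}^e \eal^d b^s\right) \;\equiv\; (d + s)\,\overline{\al}^{a-1} \overline{\ul}^{c+1} e_{2\alpha}^e \eal^d b^{s+3} \pmod{2}.
\]
Thus $\ker d_3$ consists of the monomials with $d + s$ even, and $\operatorname{im} d_3$ consists of such monomials additionally satisfying $c \geq 1$ and $s \geq 3$. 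Splitting the quotient by the value of $c$ and using $\eal^2 = e_{2\alpha}$ to absorb the parity of $d$, the $c = 0$ part collects into $\Z/2[\overline{\al}^{\pm}, e_{2\alpha}^{\pm}, b^2]\langle 1, \eal b\rangle$, while the $c \geq 1$ part (forced into $s \in \{0, 1, 2\}$) collects into $\Z/2[\overline{\al}^{\pm}, e_{2\alpha}^{\pm}]\langle \overline{\ul}^{i \geq 1}\rangle\langle 1, \eal b, b^2\rangle$. This is precisely the asserted description of $E_4$.

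For step (iv), by multiplicativity it is enough to see that the generators $\overline{\al}^{\pm}, \overline{\ul}, e_{2\alpha}^{\pm}, b^2, \eal b$ of $E_4$ are permanent cycles: the first three by $\HPt_{\bigstar}$-linearity, and $b^2, \eal b$ by the infinite-lifting diagrams already appearing in the proof of the proposition. The only real obstacle is the parity bookkeeping in step (iii); no further geometric input is required.
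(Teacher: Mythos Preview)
Your argument is correct and follows essentially the same route as the paper. The paper states the Corollary as an immediate consequence of the preceding Proposition together with the $b^{2}$-linearity of $d_{r}$ (since $b^{2}$ is shown to be a permanent cycle there), reducing everything to $d_{r}(1),d_{r}(b),d_{r}(\eal),d_{r}(\eal b)$; your explicit parity check for $d_{2}=0$ and the direct Leibniz bookkeeping on monomials are a clean way to fill in the details the paper leaves implicit, but the inputs (the two $d_{3}$-values and the permanence of $b^{2}$ and $\eal b$ from the Proposition) and the conclusion are the same.
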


By definition of $\overline{\al}=\res(\al),\overline{\ul}=\res(\ul),\res(\uta^{\pm})=e_{2\alpha}^{\mp}$, we have $\overline{\al}^{\pm},\overline{\ul},e_{2\alpha}^{\pm}$ converge to $\al,\ul,\uta^{\mp}\in\pi_{\bigstar}F(E\PP_+,\HPt)$ repectively. Now the element $\frac{\aal}{\uta}\in\pi_{\alpha-2}F(E\PP_+,\HPt)$ restricts to $0\in \pi_{\alpha-2}F(G/{C_2}_+,\HPt)=\pi_{\alpha-2}^{G/C_2}\HPt=0$. We have that $\eal b$ converges to $\frac{\aal}{\uta}$. The element $\frac{\atal}{\uta}\in\pi_{-2}F(E\PP_+,\HPt)$ restricts to $0\in [S(2\alpha)_+\wedge S^{-2},\HPt]^G=0$, thus $b^2$ converges to $\frac{\atal}{\uta}$.

Now we solve the extenison problems. The SS strongly convergences as said before. Denote $\pi_{\star}F(E\PP_+,\HPt)=R$ for simplicity, then we have $R\cong\lim_s R/F^s$ where 
\[
F^s=ker(\pi_{\star}F(E\PP_+,\HPt)\to \pi_{\star}F(S(s\alpha)_+,\HPt)),
\] 
and 
\[
F^{0}=ker(\pi_{\star}F(E\PP_+,\HPt)\to \pi_{\star}F(*,\HPt))=R.
\] 
The tower
\[
\xymatrix{
	0=\frac{R}{F^{0}}&\frac{R}{F^1}\ar[l]&\frac{R}{F^2}\ar[l]&\frac{R}{F^3}\ar[l]&\frac{R}{F^4}\ar[l]&\cdots\ar[l]&\lim_s\frac{R}{F^s}\cong R\ar[l]\\
	&\frac{F^{0}}{F^1}\ar[u]&\frac{F^{1}}{F^2}\ar[u]&\frac{F^{2}}{F^3}\ar[u]&\frac{F^{3}}{F^4}\ar[u]
}
\]
has $\frac{F^s}{F^{s+1}}\cong E_{\infty}^{\bigstar,s}$.

A careful examination of the degrees of elements show that the only possible extensions are
\[
0\leftarrow \Z/2[\al^{\pm},\uta^{\pm},\ul]\leftarrow \Z/4[\al^{\pm},\uta^{\pm},\ul]\xleftarrow{2} \Z/2[\al^{\pm},\uta^{\pm},\ul]\langle \ul\aal^2\rangle\leftarrow 0.
\]
By linearity, we only need to prove the following
\[
0\leftarrow \Z/2\langle \uta\al\rangle\leftarrow \Z/4\langle \uta\al\rangle\xleftarrow{2} \Z/2\langle \ul\aal^2\rangle\leftarrow 0.
\]
The group in the middle is $\Z/4$ because of the gold relation. The rest of the extension problems are trivial for degree reason. As a result, we have
\begin{Cor}
We have
\begin{equation}
	\pi_{\bigstar}(F(E\PP_+,\HPt))=R=\Z/4[\al^{\pm},\uta^{\pm},\ul,\aal]/(\atal\ul=2\al\uta,2\aal)
\end{equation}
\end{Cor}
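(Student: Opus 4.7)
The plan is to use strong convergence of the $\PP$-homotopy limit spectral sequence (collapsed at $E_4$ by the previous proposition) to reduce the computation of $R := \pi_{\bigstar}F(E\PP_+,\HPt)$ to the $E_\infty$-associated graded plus extensions, and then to resolve those extensions using the gold relation $\atal\ul = 2\al\uta$, which descends from $\pi_{\bigstar}H$ along the composite $H \to \HPt \to F(E\PP_+,\HPt)$ and acts on $R$ via its $H$-module structure.

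First I would record the lifts. Restriction sends $\al,\ul,\uta^{\mp}$ to $\overline{\al},\overline{\ul},e_{2\alpha}^{\pm}$, so these permanent cycles detect $\al,\ul,\uta^{\mp}$ in $R$. The discussion preceding the corollary identifies $\eal b$ with $\aal/\uta$ and $b^2$ with $\atal/\uta$, the positive filtration coming precisely from $\res(\aal) = \res(\atal) = 0$ at the $C_2$-level. Multiplying by appropriate powers of $\uta$, every $E_\infty$-monomial lifts to an explicit polynomial in $\al^{\pm},\uta^{\pm},\ul,\aal$ inside $R$, so the only remaining question is which classes get combined by extensions.

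Next I would match bidegrees. A direct check of the linear forms $V - s$ on the five listed generating monomial families shows that two distinct $E_\infty$-monomials share a target degree only in the orientable regime, namely between
\[
\overline{\al}^{a+1-j} e_{2\alpha}^c \overline{\ul}^j \text{ (filtration } 0\text{)} \quad\text{and}\quad \overline{\al}^{a-j} e_{2\alpha}^c \overline{\ul}^{j+1} b^2 \text{ (filtration } 2\text{)}
\]
for each $a,c \in \Z$ and $j \geq 0$. All other bidegrees host at most one $\Z/2$-piece, so the corresponding extensions split automatically; this accounts for the $\aal$-summand $\Z/2\langle\aal\rangle[\al^{\pm},\uta^{\pm},\ul]$ detected by the $\eal b$-column.

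The main (and only non-routine) step is the resulting $\Z/2$-by-$\Z/2$ extension in the overlapping orientable degrees. Lifting both sides to $R$, the filtration-$2$ class reads $\al^{a-j}\uta^{-c-1}\ul^{j+1}\atal$, and the gold relation rewrites this as
\[
\al^{a-j}\uta^{-c-1}\ul^{j+1}\atal = \al^{a-j}\uta^{-c-1}\ul^j\cdot 2\al\uta = 2\al^{a+1-j}\uta^{-c}\ul^{j},
\]
so the extension is nonsplit and the corresponding target group is $\Z/4$. Assembling the orientable part as $\Z/4[\al^{\pm},\uta^{\pm},\ul]$ (with all $\Z/4$-extensions encoded compactly by $\atal\ul = 2\al\uta$ together with $\atal = \aal^2$) and adjoining the $\aal$-summand $\Z/2\langle\aal\rangle[\al^{\pm},\uta^{\pm},\ul]$ yields the stated presentation.
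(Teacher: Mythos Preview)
Your proposal is correct and follows essentially the same approach as the paper: identify the convergents of the $E_\infty$-generators, match degrees to locate the only possible nontrivial extensions (between filtration~$0$ and filtration~$2$ in the orientable part), and resolve them via the gold relation $\atal\ul=2\al\uta$. The paper is slightly terser---it reduces by linearity to the single case $0\leftarrow\Z/2\langle\uta\al\rangle\leftarrow\Z/4\langle\uta\al\rangle\xleftarrow{2}\Z/2\langle\ul\aal^2\rangle\leftarrow 0$ and invokes the gold relation there---whereas you write out the general lift and the rewriting explicitly, but the content is the same.
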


\begin{Rem} Actually, the above spectral sequence can be understood as the $\aal$-Bockstein spectral sequence. Because if we apply Spanier-Whitehead dual to the cofiber sequence $S(k\alpha)_+\to S^0\xrightarrow{\aal^k} S^{k\alpha}$, we get $D(S(k\alpha)_+)\xleftarrow{}S^0\xleftarrow{\aal^k} S^{-k\alpha}$. Thus $D(S(k\alpha)_+)=C\aal^k$, where $C\aal^k$ means cofiber of the map $\aal^k$. Then the tower (\ref{tower}) becomes $*\xleftarrow{}C\aal\wedge \HPt\xleftarrow{}C\aal^2\wedge\HPt\xleftarrow{}C\aal^3\wedge\HPt\xleftarrow{}\cdots$. In the same way, the HFPSS can also be viewed as the $\al$-Bockstein spectral sequence.
\end{Rem}

Now we return to the $\PP$-Tate square (\ref{FTate2}). The homotopy group of the right upper corner is computed by \cite[Prop.3.18]{HHRa}. It has $\pi_{*}\HPf=\Z/2[\frac{\uta}{\atal}]$. Thus 
\[
\pi_{\bigstar}\HPf=\Z/2[\uta,\aal^{\pm},\al^{\pm}].
\]

Since $\widetilde{E\{e,C_2\}}\simeq S^{\infty\alpha}$, the map $E\to E\wedge\widetilde{E\{e,C_2\}}$ is localization at $\aal$ for any $G$-spectrum $E$. Thus
\[
\pi_{\bigstar}\widetilde{E\PP}\wedge F(E\PP_+,\HPt)=\pi_{\bigstar}F(E\PP_+,\HPt)[\aal^{\pm}]=\Z/2[\uta^{\pm},\aal^{\pm},\al^{\pm}].
\]
Note $\ul$ is $\aal$-torsion when working over $\Z/2$. Thus it is killed when inverting $\aal$. We can also notice that the map $\HPf=\widetilde{E\PP_+}\wedge \HPt\to \widetilde{E\PP_+}\wedge F(E\PP_+,\HPt)$ is localization at $\uta$. Some simple calculation in the second row gives us
\begin{align*}
	\pi_{\bigstar}E\PP_+\wedge\HPt&=\Z/4[\al^{\pm},\uta^{\pm}]\langle \ul^i\rangle_{i\geq 1}\\
	&\oplus\Z/2[\al^{\pm},\uta^{\pm}]\langle \ul^i\aal\rangle_{i\geq 1}\\
	&\oplus 2\Z/2[\al^{\pm},\uta^{\pm}]\\
	&\oplus\Z/2\langle \Si\aal^{-i}\rangle_{i\geq 1}[\al^{\pm},\uta^{\pm}].
\end{align*}
The $2\Z/2\langle 1\rangle$ means it is $\Z/2$ and the generator maps to $2\in \Z/4$ in $\pi_{\bigstar}F(E\PP_+,\HPt)$. Others follow from linearity. Sometimes we may write $2\Z/2\langle 1\rangle=\Z/2\langle 2\rangle$.

Now consider the following exact sequence
\[
0\to K^{\Phi_2}\to \pi_{\bigstar}\HPf\xrightarrow{\delta}\pi_{\bigstar-1}E\PP_+\wedge\HPt\to C^{\Phi_2}\to 0.
\]
Here $K^{\Phi_2},C^{\Phi_2}$ denotes kernel and cokernel respectively. The only nontrivial connecting maps are
\[
\Z/2\langle \aal^{-i}\rangle_{i\geq 1}[\uta,\al^{\pm}]\underset{\cong}{\xrightarrow{\delta}}\Z/2\langle \Si\aal^{-i}\rangle_{i\geq 1}[\uta,\al^{\pm}]
\]
As a result, we are left with the extension problems
\begin{equation*}
0 \rightarrow C^{\Phi_2} \rightarrow\pi_{\bigstar}\HPt \rightarrow K^{\Phi_2} \rightarrow 0.
\end{equation*}
For degree reason, the only possible non-trivial extension is the following
\begin{align*}
	0 \rightarrow 2\Z/2[\al^{\pm},\uta]
	\rightarrow \pi_{\bigstar}\HPt \rightarrow \Z/2[\al^{\pm},\uta] \rightarrow 0.\\
\end{align*}
By linearity, we consider the above extension in degree 0. Since every spectrum in the $\PP$-Tate diagram (\ref{FTate}) has $\pi_1=0,\pi_{-1}=0$, the 5-lemma tells us the vertical map in the middle induced by $\HPt\to F(E\PP_+,\HPt)$ is an isomorphism
\[
\xymatrix{
	0\ar[r] &\Z/2\langle 2\rangle\ar[r]\ar[d]_{\cong} &\pi_0(\HPt)\ar[r]\ar[d] &\Z/2\langle 1\rangle\ar[r]\ar[d]_{\cong} &0\\
	0\ar[r] &\Z/2\langle 2\rangle\ar[r]^-2&\Z/4\langle 1\rangle\ar[r]^-1 &\Z/2\langle 1\rangle\ar[r]&0.
}
\]
Thus, we conclude
\begin{Thm} The top level of the $RO(C_4)$-graded Green functor $\underline{\pi_{\bigstar}}\HPt$ is
\begin{equation*}
	\begin{split}
		\HPt_{\bigstar}&=\Z/4[\al^{\pm},\uta,\ul,\aal]/(2\al\uta=\atal\ul,2\aal)\\
		&\oplus\Z/4[\al^{\pm}]\langle \uta^{-i}\ul^j\rangle_{i,j\geq 1}\langle 1,\aal\rangle/2\aal\\
		&\oplus\Z/2\langle \Si\aal^{-i}\uta^{-j}\rangle_{i,j\geq 1}[\al^{\pm}]\\
		&\oplus\Z/2[\al^{\pm}]\langle 2\uta^{-i}\rangle_{i\geq 1}.
	\end{split}
\end{equation*}
Note that by our notational convention, $\HPt_{\bigstar}=\underline{\pi_{\bigstar}}\HPt(G/G)$. 
\end{Thm}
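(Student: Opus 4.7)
The plan is to extract $\pi_{\bigstar}\HPt$ from the long exact sequence associated to the top cofiber sequence $E\PP_+\wedge \HPt \to \HPt \to \HPf$ in the $\PP$-Tate square (\ref{FTate2}). Both outer homotopies are now known: $\pi_{\bigstar}\HPf = \Z/2[\uta,\aal^{\pm},\al^{\pm}]$ follows from the geometric fixed-point formula (\ref{geometric}), and $\pi_{\bigstar}E\PP_+\wedge\HPt$ was displayed just above as a sum of four summands, obtained by exploiting the equivalence of the left-hand column of (\ref{FTate2}). The problem thus reduces to two steps: (i) identify the connecting map $\delta\colon \pi_{\bigstar}\HPf \to \pi_{\bigstar-1}E\PP_+\wedge\HPt$ so as to read off its kernel $K^{\Phi_2}$ and cokernel $C^{\Phi_2}$, and (ii) solve the resulting short exact sequence $0\to C^{\Phi_2}\to \pi_{\bigstar}\HPt\to K^{\Phi_2}\to 0$.

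For step (i), I would use the natural map of $\PP$-Tate squares induced by the unit $\HPt \to F(E\PP_+,\HPt)$. Its left-hand vertical is already an equivalence, and the bottom connecting map was analyzed in the previous subsection to determine $R = \pi_{\bigstar}F(E\PP_+,\HPt)$. Naturality lets me identify the top $\delta$ with the bottom $\delta$ summand by summand on both sides. Concretely, $\delta$ vanishes on the $\aal^{\geq 0}$ part of $\pi_{\bigstar}\HPf$ (each such class lifts to $\HPt$ since $\aal$, $\uta$, $\al$ already live there), and it sends the $\aal^{-i}$ classes isomorphically onto the $\Si\aal^{-i}$ summand of $\pi_{\bigstar-1}E\PP_+\wedge\HPt$. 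The other two summands $2\Z/2[\al^{\pm},\uta^{\pm}]$ and $\Z/4[\al^{\pm},\uta^{\pm}]\langle \ul^{i\geq 1}\rangle$ of $\pi_{\bigstar}E\PP_+\wedge\HPt$ land entirely in $C^{\Phi_2}$, producing the last two summands of the claimed answer, while $K^{\Phi_2}$ produces the first two summands up to the extension to be solved in step (ii).

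For step (ii), most potential extensions split for degree reasons, since distinct summands live in distinct $RO(C_4)$-bidegrees. The only delicate case is the bidegree containing $\uta\al$, where $C^{\Phi_2}$ contributes $2\Z/2\langle \uta\al\rangle$ and $K^{\Phi_2}$ contributes $\Z/2\langle \ul\aal^2\rangle$, leaving an ambiguity between $\Z/2\oplus\Z/2$ and $\Z/4$. To resolve it I would again compare with $F(E\PP_+,\HPt)$ via the same vertical map of $\PP$-Tate squares. In this bidegree $\pi_{1}$ and $\pi_{-1}$ vanish at both endpoints, so the outer vertical maps in the induced diagram of short exact sequences are isomorphisms; the middle group on the $F(E\PP_+,\HPt)$-side is $\Z/4$ by the gold relation $\atal\ul = 2\al\uta$ built into $R$, so the five-lemma forces the middle group on the $\HPt$-side to be $\Z/4$ as well. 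Propagating by $\al^{\pm}$-linearity then gives the gold-relation presentation displayed in the first two summands.

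The only real obstacle is this single gold-relation extension; identifying $\delta$ is routine once the natural map into $F(E\PP_+,\HPt)$ is in place, and every other extension splits off on degree grounds.
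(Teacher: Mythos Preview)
Your strategy is exactly the paper's: analyze the connecting map $\delta$ in the top row of the $\PP$-Tate square, then resolve the single nontrivial extension by the 5-lemma against $R=\pi_{\bigstar}F(E\PP_+,\HPt)$. However, your bookkeeping in step (i) contains a slip that would cost you the third summand of the answer.

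You assert that $\delta$ sends the $\aal^{-i}$ classes of $\pi_{\bigstar}\HPf$ isomorphically onto the entire summand $\Z/2\langle\Si\aal^{-i}\rangle_{i\geq 1}[\al^{\pm},\uta^{\pm}]$ of $\pi_{\bigstar-1}E\PP_+\wedge\HPt$. But $\pi_{\bigstar}\HPf=\Z/2[\uta,\aal^{\pm},\al^{\pm}]$ carries only nonnegative powers of $\uta$, so $\delta$ can only hit the $\uta^{\geq 0}$ part of that summand. The classes $\Si\aal^{-i}\uta^{-j}$ with $i,j\geq 1$ survive into $C^{\Phi_2}$ and are precisely the third summand $\Z/2\langle\Si\aal^{-i}\uta^{-j}\rangle_{i,j\geq 1}[\al^{\pm}]$ of the theorem; they do not arise from either of the two $C^{\Phi_2}$-summands you named. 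You also omit the summand $\Z/2[\al^{\pm},\uta^{\pm}]\langle\ul^i\aal\rangle_{i\geq 1}$ of $\pi_{\bigstar}E\PP_+\wedge\HPt$; it too lies entirely in $C^{\Phi_2}$ and supplies the $\langle\aal\rangle$-part of the second summand.

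In step (ii) your argument is fine and matches the paper, but one label is off: at the relevant degree $K^{\Phi_2}=\Z/2[\uta,\aal,\al^{\pm}]$ contributes $\Z/2\langle\uta\al\rangle$, not $\Z/2\langle\ul\aal^2\rangle$ (there is no $\ul$ in $K^{\Phi_2}$). The identity $2\cdot\al\uta=\atal\ul$ is the \emph{output} of the extension, not its input. The paper runs the 5-lemma at degree $0$ and then propagates by $\al^{\pm},\uta$-linearity.
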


\begin{Rem}
Here the classes $\Z/2\langle \Si\uta^{-i}\aal^{-j}\rangle_{i,j\geq 1}$, unlike before, do not come from any part of our main Tate square (\ref{Tate}) as we can find $\aal^{-1}$ from nowhere. It should be understood as a class $\Z/2\langle \Si\uta^{-1}\aal^{-1}\rangle\in \pi_{3\alpha-3}\HPt$ that is infinitely divisible by $\aal$ and $\uta$. From the next subsection we will see that this class is the image under $H\to \HPt$ of a class $\tre\in\pi_{3\alpha-3}H$.\\
\end{Rem}
For clarity, we will denote $\Z/2\langle \Si\uta^{-i}\aal^{-j}\rangle_{i,j\geq 1}=\Z/2\Tre$. Thus we get
\begin{equation}
	\begin{split}
		\HPt_{\bigstar}&=\Z/4[\al^{\pm},\uta,\ul,\aal]/(2\al\uta=\atal\ul,2\aal)\\
		&\oplus\Z/4[\al^{\pm}]\langle \uta^{-i}\ul^j\rangle_{i,j\geq 1}\langle 1,\aal\rangle/2\aal\\
		&\oplus\Z/2\Tre[\al^{\pm}]\\
		&\oplus\Z/2[\al^{\pm}]\langle 2\uta^{-i}\rangle_{i\geq 1}.
	\end{split}\label{HPt}
\end{equation}	

\subsubsection{Determination of the differentials}\label{dod} To determine the differentials in the last subsubsection, we need to know the sequences of groups
\begin{equation}
	\pi_{2\alpha-2}\HPt\xleftarrow{\aal} \pi_{3\alpha-2}\HPt\xleftarrow{\aal} \pi_{4\alpha-2}\HPt\xleftarrow{\aal} \pi_{5\alpha-2}\HPt\xleftarrow{\aal}\cdots\label{2alphatower}
\end{equation}

\begin{equation}
	\pi_{2\alpha-3}\HPt\xleftarrow{\aal} \pi_{3\alpha-3}\HPt\xleftarrow{\aal} \pi_{4\alpha-3}\HPt\xleftarrow{\aal} \pi_{5\alpha-3}\HPt\xleftarrow{\aal}\cdots\label{3alphatower}
\end{equation}

This is easily done using induction based on the following cofiber sequences
\[
G/{C_2}_+\wedge S^{k\alpha}\xrightarrow{res} S^{k\alpha}\to S^{(k+1)\alpha}
\]
obtained from smashing $G/{C_2}_+\xrightarrow{res} S^0\to S^{\alpha}$ with $S^{k\alpha}$. The map $res$ is induced from the only $G$-map $G/C_2\to *$ which will become restriction upon applying $[-,\Sigma^n\HPt]^G$. Since we only care about the $G/G$-level Mackey functor, we only need to compute
\[
\xymatrix{
	\pi_{*+k\alpha}\HPt(G/G) \ar@/_/[d]_{\res}\\
	\pi_{*+k\alpha}\HPt(G/C_2)\ar@/_/[u]_{\tr}
}
\]
for $k\geq 1$. Actually, since $\HPt$ is concentrated over $C_2$, i.e., $i^*_e\HPt\simeq *$, the bottom level is always 0, i.e., this is a $C_2'$ computation. And we have $i^*_{C_2}\HPt\simeq H\wedge \widetilde{EC_2}\simeq H[a_{\sigma}^{-1}]$.

To get started, in the cofiber sequence
\[
H_h\to H\to \HPt,
\]
using results from (\ref{H_h}) and the definition of $H$, we have
\begin{align*}
	H_h{}_*&=4\Z\langle 1\rangle\\
	&\oplus \Z/4\langle \Si(\frac{\ul}{\al})^i\rangle_{i\geq 1},\\
	H_*&=\Z\langle 1\rangle.
\end{align*}
As a result, we get $\HPt_*=\Z/4[\frac{\ul}{\al}]$. Thus
\[
\pi_{*}\HPt=
\xymatrix{
	\Z/4[\frac{\ul}{\al}] \ar@/_/[d]_1\\
	\Z/2[\frac{u_{2\sigma}}{\ats}]\ar@/_/[u]_2\ar@/_/[d]_0\\
	0\ar@/_/[u]_0.
}.
\]
$\tr=2$ is derived from $\res=1$ and the cohomological condition $\tr\circ \res=|G/C_2|=2$. This condition holds because $\HPt$ is a $H$-algebra thus its homotopy is a $\uZ$-module \cite[Prop 16.3]{TW95}.

\begin{Rem}
When we are dealing with a spectrum $\HPt$ which is much simper than $H$ itself, cellular induction could be a third method to compute its $RO(G)$-graded homotopy. The author actually also computed $\HPt_{\bigstar}$ using this induction method, but the $\PP$-Tate square is a cleaner one and it only require knowledge of $\HPt_{k\alpha-2}, \HPt_{k\alpha-3},k\geq 2$.
\end{Rem}

From the following two cofiber sequences (we apply ${G/C_2}_+\wedge -$ to the first one to get the second one, and they correspond to the computation of $\pi_{*+k\alpha}^{G/G}\HPt$ and $\pi_{*+k\alpha}^{G/C_2}\HPt$ respectively)
\begin{equation}
\begin{split}
\xymatrix{
	\ar[r]&G/{C_2}_+\ar[r]^-{res} &S^0\ar[r] &S^{\alpha}\ar[r] &G/{C_2}_+\wedge S^1\ar[r] &S^1\ar[r]&\cdots,\\
	\ar[r]&2G/{C_2}_+\ar[r]^-{\nabla} &G/{C_2}_+\ar[r] &G/{C_2}_+\wedge S^{\alpha}\ar[r] &2G/{C_2}_+\wedge S^1\ar[r]^-{\nabla} &S^1\ar[r]&\cdots,
}
\end{split}\label{induction}
\end{equation}
we know
\begin{equation*}
   \pi_{*+\alpha}\HPt=
\xymatrix{
	\langle 2\aal^{-1}\rangle\Z/2[\frac{\ul}{\al}] \ar@/_/[d]_0\\
	\langle \eal\rangle\Z/2[\frac{u_{2\sigma}}{\ats}]\ar@/_/[u]_0\ar@/_/[d]_0\\
    0\ar@/_/[u]_0
}, \label{1alpha}
\end{equation*}
where restricitons and transfers are $0$ for degree reason. The computation is as follows. In (\ref{induction}), 
\[
2G/{C_2}=G/{C_2}\coprod G/{C_2}.
\] 
The folding map $2G/{C_2}\xrightarrow{\nabla}G/{C_2}$ is identified with the map 
\[
G/{C_2}\times G/{C_2}\xrightarrow{1\times res}G/{C_2}\times *
\] 
through 
\[
G/{C_2}\times G/{C_2}\cong \{(1,1),(\bar{\gamma},\bar{\gamma})\}\coprod \{(1,\bar{\gamma}),(\bar{\gamma},1)\}.
\] 
It becomes diagonal upon applying $[-,\Sigma^n\HPt]^G,n\in\Z$.

Applying $[-,\Sigma^{-*}\wedge\HPt]^G$ to the first cofiber sequence in (\ref{induction}), we get the exact sequence
\begin{equation*}
    \cdots\leftarrow\pi^{G/C_2}_*\xleftarrow{\res}\pi_*\xleftarrow{\aal}\pi_{\alpha+*}\leftarrow{}\pi^{G/C_2}_{*+1}\xleftarrow{\res}\pi_{*+1}\leftarrow\cdots.
\end{equation*}
The non-trivial restrictions are the surjections $\Z/4\twoheadrightarrow \Z/2$ when $*=2k,k\geq 0$. The non-tirival classes all come from kernels of the restricitons, which are $2\cdot (\frac{\ul}{\al})^i,i\geq 0$. Lifting along the map $\aal$ gives us the $G/G$-level of the graded Mackey functor $\underline{\pi}_{*+\alpha}\HPt$.

Applying $[-,\Sigma^{-*}\wedge\HPt]^G$ to the second cofiber sequence in (\ref{induction}), we get the exact sequence
\begin{equation*}
    \cdots\leftarrow 2\pi^{G/C_2}_{*}\xleftarrow{\Delta}\pi^{G/C_2}_{*}\xleftarrow{\aal}\pi^{G/C_2}_{\alpha+*}\leftarrow 2\pi^{G/C_2}_{*+1}\xleftarrow{\Delta}\pi^{G/C_2}_{*+1}\leftarrow\cdots.
\end{equation*}
Here $2\pi^{G/C_2}_{*}=\pi^{G/C_2}_{*}\oplus \pi^{G/C_2}_{*}$ and $\Delta$ is the diagonal map. $\Delta$ is always injective, and in the non-trivial cases, we have $\Delta:\Z/2\to \Z/2\oplus\Z/2$. All the non-trivial classes come from cokernels of $\Delta$. Since the diagonals in $\Z/2[\frac{\uts}{\ats}]\oplus \Z/2[\frac{\uts}{\ats}]$ are killed, we can choose either the left or right copy of $\Z/2$ to get 
\[
Coker(\Delta)_{*+1}=\Z/2[\frac{\uts}{\ats}]\subset 2\pi^{G/C_2}_{*+1}, *=-1+2k, k\geq 0.
\]
Since $\eal\cdot \pi^{G/C_2}_{*+1}=\pi^{G/C_2}_{\alpha+*}$, we get the $G/C_2$-level of the graded Mackey functor $\underline{\pi}_{*+\alpha}\HPt$. $\res,\tr$ are $0$ for degree reason: the $G/G$-level is concentrated in even gradings of the trivial representation, while the $G/C_2$-level is concentrated in odd gradings of the trivial representation.

For $\underline{\pi}_{*+2\alpha}\HPt$, smash $S^{\alpha}$ with (\ref{induction}) and applying $[-,\Sigma^n\HPt]^G,n\in\Z$, we get
\begin{equation*}
   \underline{\pi}_{*+2\alpha}\HPt=
\xymatrix{
	\Z/2\langle2\uta^{-1}\rangle\ar@/_/[d]_0&&\Z/4\langle\uta^{-1} (\frac{\ul}{\al})^i\rangle_{i\geq 1} \ar@/_/[d]_1\\
	\Z/2\langle\etal\rangle\ar@/_/[d]_0\ar@/_/[u]_1&\oplus &\Z/2\langle\etal\cdot(\frac{\uts}{\ats})^i\rangle_{i\geq 1}\ar@/_/[u]_2\ar@/_/[d]_0\\
	0\ar@/_/[u]_0&&0\ar@/_/[u]_0.
}, \label{2alpha}
\end{equation*}
The Mackey structure of the first summand is determined by degree reason and that the top class is killed by $\aal$ (\ref{lemrestr}). It can also be deduced by noting that the map $S^{2\alpha}\to G/{C_2}_+\wedge S^{1+\alpha}\cong G/{C_2}_+\wedge S^{2\alpha}$ becomes $\tr$ after applying a contravariant functor. The generator on the top is determined by Frobenius reciprocity: we get $\tr(\etal)=2\uta^{-1}$ by noticing $\tr(\etal)\cdot \uta=\tr(\etal\cdot\res(\uta))=\tr(1)=2$. The second summand follows from $(\frac{\ul}{\al})$-linearity once we know the following extension
\begin{equation*}
\begin{aligned}
    0\leftarrow{}\pi_{\alpha}=\Z/2\langle2\aal^{-1}\rangle\xleftarrow{\aal}\pi_{2\alpha}=\Z/4\langle\uta^{-1}\cdot \frac{\ul}{\al}\rangle\xleftarrow{\tr}\pi^{G/C_2}_{2\alpha}=\Z/2\langle\etal\cdot(\frac{\uts}{\ats})\rangle\leftarrow 0.
\end{aligned}
\end{equation*}
The gold relation tells us that the middle group is $\Z/4$: Firstly, we have $2\aal^{-2}=\uta^{-1}\frac{\ul}{\al}$, where the former class is a lift of $2\aal^{-1}$ along $\aal$. This is true since if $\aal^2\cdot\uta^{-1}\cdot \frac{\ul}{\al}=0\in\pi_0\HPt$ rather than $2$, it will lead to a contradiction once we map to $H^t_{\bigstar}$. Secondly, $\tr(\etal\cdot\frac{\uts}{\ats})=2\cdot\uta^{-1}(\frac{\ul}{\al})$ by Frobenious reciprocity. Thus we deduce that the middle group is $\Z/4$. $\res=1$ follows from the cohomological condition.

Smashing $S^{2\alpha}$ with (\ref{induction}) and applying $[-,\Sigma^n\HPt]^G,n\in\Z$ gives us
\begin{equation*}
   \underline{\pi}_{*+3\alpha}\HPt=
\xymatrix@C0.5em{
	\Z/2\langle\tre\rangle\ar@/_/[d]_0&&\Z/2\langle2\aal^{-1}\uta^{-1}\rangle \ar@/_/[d]_0&&\Z/2\langle2\aal^{-1}\uta^{-1}(\frac{\ul}{\al})^i\rangle_{i\geq 1} \ar@/_/[d]_0\\
	\Z/2\langle\ethal\rangle\ar@/_/[d]_0\ar@/_/[u]_1&\oplus &0\ar@/_/[u]_0\ar@/_/[d]_0&\oplus&0\ar@/_/[u]_0\ar@/_/[d]_0\\
	0\ar@/_/[u]_0&&0\ar@/_/[u]_0&&0\ar@/_/[u]_0.
} \label{3alpha}
\end{equation*}

\begin{Rem}
This is a good way to notice the existence of the class $\tre\in \HPt_{3\alpha-3}$ which lifts to $H_{3\alpha-3}$ since $({H_h})_{3\alpha-3}=({H_h})_{3\alpha-4}=0$.
\end{Rem}

Keep going and solving extensions by the gold relation as above gives us:
\begin{equation*}
   \underline{\pi}_{*+4\alpha}\HPt=
\xymatrix@C0.5em{
	\Z/2\langle\tr(e_{4\alpha})\rangle\ar@/_/[d]_0&&\Z/2\langle\aal^{-1}\tre\rangle \ar@/_/[d]_0&&\Z/4\langle(\frac{\ul}{\al})^i\uta^{-2}\rangle_{i\geq 1} \ar@/_/[d]_1\\
	\Z/2\langle e_{4\alpha}\rangle\ar@/_/[d]_0\ar@/_/[u]_1&\oplus &0\ar@/_/[u]_0\ar@/_/[d]_0&\oplus&\Z/2\langle (\frac{\uts}{\ats})^i\etal^2\rangle_{i\geq 1}\ar@/_/[u]_2\ar@/_/[d]_0\\
	0\ar@/_/[u]_0&&0\ar@/_/[u]_0&&0\ar@/_/[u]_0.
} \label{4alpha}
\end{equation*}

and
\begin{equation*}
   \underline{\pi}_{*+5\alpha}\HPt=
\xymatrix@C0.5em{
	\Z/2\langle \tre\uta^{-1}\rangle\ar@/_/[d]_0&&\Z/2\langle2\aal^{-1}\uta^{-2}\rangle[\frac{\ul}{\al}] \ar@/_/[d]_0&&\Z/2\langle\tre\aal^{-2}\rangle\ar@/_/[d]_0\\
	\Z/2\langle e_{5\alpha}\rangle\ar@/_/[d]_0\ar@/_/[u]_1&\oplus &0\ar@/_/[u]_0\ar@/_/[d]_0&\oplus&\Z/2\langle e_{5\alpha}\cdot\frac{\uts}{\ats}\rangle\ar@/_/[u]_0\ar@/_/[d]_0\\
	0\ar@/_/[u]_0&&0\ar@/_/[u]_0&&0\ar@/_/[u]_0.
} \label{5alpha}
\end{equation*}

Here $\tr(5\alpha)=\tre\uta^{-1}$ follows from Frobenious reciprocity. In the last summand, $\res=0$ is deduced from noticing $\pi_{6\alpha-3}\HPt=\Z/2\langle \tre\aal^{-3}\rangle$ and use (\ref{lemrestr}). Actually $\tre$ is infinitely $\aal$-divisible, so all the restrictions involving it are $0$. Again by (\ref{lemrestr}), $\tr=0$.

We can keep going and use Spanier-Whitehead duality to (\ref{induction}) to compute $\pi_{*\alpha+*}\HPt$. But to understand (\ref{2alphatower}) and (\ref{3alphatower}), knowing $\pi_{i\alpha-2}\HPt$ and $\pi_{i\alpha-3}\HPt$ for $i\geq 2$ is enough. The $\aal$-tower in (\ref{2alphatower}) is
\begin{equation*}
    \Z/2\langle 2\uta^{-1}\rangle\underset{1}{\xleftarrow{\aal}}\Z/2\langle 2\aal^{-1}\uta^{-1}\rangle\underset{1}{\xleftarrow{\aal}}\Z/4\langle (\frac{\ul}{\al})\uta^{-2}\rangle\underset{2}{\xleftarrow{\aal}}\Z/2\langle 2\aal^{-1}(\frac{\ul}{\al})\uta^{-2}\rangle\xleftarrow{\aal}\cdots.
\end{equation*}
The $\aal$-tower in (\ref{3alphatower}) is
\begin{equation*}
    0\underset{}{\xleftarrow{\aal}}\Z/2\langle \tre\rangle\underset{1}{\xleftarrow{\aal}}\Z/2\langle \aal^{-1}\tre\rangle\underset{1}{\xleftarrow{\aal}}\Z/2\langle \aal^{-2}\tre\rangle\xleftarrow{\aal}\cdots.
\end{equation*}
Depending on the parity of the number of $\alpha$ in the grading, the second tower is deduced either for degree reason or by noticing $\res=1$ on $\pi_{2k\alpha-2}\HPt=\Z/4\langle (\frac{\ul}{\al})^{k-1}\uta^{-k}\rangle,k\geq 2$.\\

\section{Remaining computations in the main Tate square}
Now we will finish the computation in the main Tate square (\ref{Tate}). The essential step is to understand the connecting homomorphism
\[
0\to K\to \HPt_{\bigstar}\xrightarrow{\delta} {H_h}_{\bigstar-1}\to C\to 0.\label{connecting}
\]

For convenience, we decompose $\HPt_{\bigstar}$ into two parts
\[
\HPt_{\bigstar}=\HPt_{\leq 0}\oplus \HPt_{\geq 1}
\]
where
\begin{align}
\HPt_{\leq 0}&=\Z/4[\al^{\pm},\aal,\ul,\uta]/(4\al,2\aal,\atal\ul=2\al\uta)\\
\HPt_{\geq 1}&=\Z/2\Tre[\al^{\pm}]\label{1}\\
&\oplus\Z/2[\al^{\pm}]\langle \uta^{-i}\ul^j\rangle_{i,j\geq 1}\langle \aal\rangle\label{2}\\
&\oplus\Z/2\langle 2\uta^{-i}\rangle_{i\geq 1}[\al^{\pm}]\label{3}\\
&\oplus\Z/4\langle \uta^{-k}\ul^i\rangle_{i,k\geq 1}[\al^{\pm}].\label{4}
\end{align}
Here $\leq 0,\geq 1$ means the coefficients before $\alpha$. Accordingly, we compute the connecting homomorphism in two steps: we have the following two exact sequences

\begin{align}
 0\to K_1\to \HPt_{\leq 0}\xrightarrow{\delta} {H_h}_{\bigstar-1}\to C_1\to 0.\label{seq1}
\end{align}

\begin{align}
 0\to K_2\to \HPt_{\geq 1}\xrightarrow{\delta} C_1\to C\to 0.\label{seq2}
\end{align}
Obviouisly, $K=K_1\oplus K_2$. For the first sequence (\ref{seq1}) additively,
\begin{align*}
 \HPt_{\leq 0}&=\Z/4[\al^{\pm},\uta,\ul]\langle 1,\aal\rangle/2\aal\\
 &\oplus \Z/2[\al^{\pm},\uta]\langle \aal^i\rangle_{i\geq 2}.
\end{align*}
There are obvious isomorphisms:

\begin{align*}
 &\Z/2\langle \frac{\aal}{\al^i}\rangle_{i\geq 1}[\uta,\ul]\xr{\quad\delta\quad}{\cong}\Z/2\langle \Si\frac{\aal}{\al^i}\rangle_{i\geq 1}[\uta,\ul]\\
 &\Z/4\langle \frac{1}{\al^i}\rangle_{i\geq 1}[\uta,\ul]\xr{\quad\delta\quad}{\cong}\Z/4\langle \Si\frac{1}{\al^i}\rangle_{i\geq 1}[\uta,\ul]\\
\end{align*}

Then we look at the elements with factor $\aal^i,i
\geq 2$. Note that $\delta$ is the composite of the three maps in the Tate square:

\[
 \xymatrix{
 \HPt\ar[r]^{\delta}\ar[d] &\Sigma H_h\\
 H\uZ^t\ar[r] &\Sigma H_h\ar[u]_{\cong}
 }.\label{connecting square}
\]
$\delta(\frac{\atal}{\al})=2\cdot\Si\frac{\uta}{\ul}=0$ in ${H_h}_{\bigstar}$, which can be computed using the gold relation in $H\uZ^t_{\bigstar}$. Thus $\delta(\frac{a_{\alpha}^i}{\al})=2\cdot\Si\frac{\uta\aal^{i-2}}{\ul}=0,i\geq 2$, which implies $\frac{a_{\alpha}^i}{\al}[\uta,\al]\in K,i\geq 2$.\\ 

On the other hand, $\delta(\frac{\atal}{\al^2})=2\cdot \Si\frac{\uta}{\al\ul}\neq 0$ in ${H_h}_{\bigstar}$. We have the following sequence:
\[
 0\to\Z/2\langle \frac{\atal}{\al^2}\rangle[\uta,\frac{1}{\al}]\xr{\delta}{2}\Z/4\langle \Si\frac{\uta}{\al\ul}\rangle[\uta,\frac{1}{\al}]\to \Z/2\langle \Si\frac{\uta}{\al\ul}\rangle[\uta,\frac{1}{\al}]\to 0,
\]
providing us $\Z/2\langle \Si\frac{\uta}{\al\ul}\rangle[\uta,\frac{1}{\al}]\in C_1$.

Since all elements in $\HPt_{\bigstar}$ are torsion, the elements $4\Z[\uta^{\pm},\ul^{\pm}]$ survives and gives elements in $C_1$. Thus we get
\begin{align*}
 K_1&=\Z/4[\al,\uta,\ul]\langle 1,\aal\rangle/2\aal\\
 &\oplus\Z/2\langle \aal^i\rangle_{i\geq 2}[\uta,\al]\\
 &\oplus\Z/2\langle \frac{\aal^j}{\al}\rangle_{j\geq 2}[\uta]\\
 &\oplus\Z/2\langle \aal^i\al^{-j}\rangle_{i\geq 3,j\geq 2}[\uta].
\end{align*}

\begin{align*}
 C_1&=4\Z[\uta^{\pm},\ul^{\pm}]\\
 &\oplus\Z/2\langle \Si\frac{\aal}{\al^i\uta^j}\rangle_{i,j\geq 1}[\ul]\\
 &\oplus\Z/2\langle \Si\frac{\aal}{\al^i\ul^j}\rangle_{i,j\geq 1}[\uta^{\pm}]\\
 &\oplus\Z/4\langle \Si\frac{1}{\al^i\uta^j}\rangle_{i,j\geq 1}[\ul]\\
 &\oplus\Z/4\langle \Si\frac{1}{\al^i\ul^j}\rangle_{i\geq, j\geq 2}[\uta^{\pm}]\\
 &\oplus\Z/4\langle \Si\frac{1}{\al^i\ul}\rangle_{i\geq 1}[\uta^{-1}]\\
 &\oplus\Z/2\langle \Si\frac{1}{\al^i\ul}\rangle_{i\geq 1}\langle \uta^j\rangle_{j\geq 1}.
\end{align*}

Now we consider the sequence (\ref{seq2}). For degree reason, the elements in (\ref{1}) do not hit any element under $\delta$. For (\ref{2}), we have the following isomorphism:
\begin{align*}
\Z/2\langle \frac{\aal\ul^i}{\uta^j\al^k}\rangle_{i,j,k\geq 1}\xr{\quad\delta\quad}{\cong}\Z/2\langle \Si\frac{\aal\ul^i}{\uta^j\al^k}\rangle_{i,j,k\geq 1}
\end{align*}
providing us $\Z/2\langle \frac{\aal\ul^j}{\uta^i}\rangle[\al]\in K_2$ and $\Z/2\langle \Si\frac{\aal}{\al^i\uta^j}\rangle_{i,j\geq 1}\in C$.

With the obvious name comparison, we have the follwing exact sequence and isomorphism:
\begin{equation}
 0\to \Z/2\langle 2\uta^{-i}\al^{-j}\rangle_{i,j\geq 1}\xr{2}{\delta}\Z/4\langle \Si\uta^{-i}\al^{-j}\rangle_{i,j\geq 1}\to \Z/2\langle \Si\uta^{-i}\al^{-j}\rangle_{i,j\geq 1}\to 0, 
\end{equation}
and
\begin{equation}
 \Z/4\langle \uta^{-k}\ul^i\al^{-j}\rangle_{i,j,k\geq 1}\xr{\quad\delta\quad}{\cong}\Z/4\langle \Si\uta^{-k}\ul^i\al^{-j}\rangle_{i,j,k\geq 1}.
\end{equation}

Take every thing into account, we get
\begin{align*}
 K_2&=\Z/2\Tre[\aal^{-1}][\al^{\pm}]\\
 &\oplus \Z/2\langle \frac{\aal\ul^i}{\uta^j}\rangle_{i,j\geq 1}[\al]\\
 &\oplus\Z/2\langle 2\uta^{-i}\rangle_{i\geq 1}[\al]\\
 &\oplus \Z/4\langle \uta^{-k}\ul^{i}\rangle_{i,k\geq 1}[\al].
\end{align*}

\begin{align*}
 C&=4\Z[\uta^{\pm},\ul^{\pm}]\\
 &\oplus \Z/2\langle \Si\frac{\aal}{\al^i\ul^k}\rangle_{i,k\geq 1}[\uta^{\pm}]\\
 &\oplus \Z/4\langle \Si\frac{1}{\al^i\ul^j}\rangle_{i\geq 1,j\geq 2}[\uta^{\pm}]\\
 &\oplus \Z/4\langle \Si\frac{1}{\al^i\ul}\rangle_{i\geq 1}[\uta^{-1}]\\
 &\oplus \Z/2\langle \Si\frac{\uta^j}{\al^i\ul}\rangle_{i,j\geq 1}\\
 &\oplus \Z/2\langle \Si\frac{\aal}{\al^i\uta^j}\rangle_{i,j\geq 1}\\
 &\oplus \Z/2\langle \Si\frac{1}{\al^i\uta^j}\rangle_{i,j\geq 1}.
\end{align*}
Now we are only left with the extension problems:
\begin{equation}
 0\to C\to H\uZ_{\bigstar}\to K\to 0\label{ext}
\end{equation}
with 
\begin{align*}
 K=K_1\oplus K_2.
\end{align*}

For classes of the form $\theta\uta^i\ul^j$ for $i,j\in \Z$ and $\theta=1,2,4$, we have the following extensions except the exotic case (\ref{exotic}) that will be discussed below:

\begin{Prop}
The only extensions in (\ref{ext}) are the ones listed below and the exotic ones (\ref{exotic}).
\begin{equation}
\begin{split}
 &0\to 4\Z[\uta,\ul]\xrightarrow{4}\Z[\uta,\ul]\to\Z/4[\uta,\ul]\to 0,\\
 &0\to 4\Z\langle \uta^{-1}\rangle_{i\geq}\xrightarrow{2}\Z\langle 2\uta^{-i}\rangle_{i\geq 1}\to \Z/2\langle 2\uta^{-i}\rangle_{i\geq 1}\to 0,\\
 &0\to 4\Z\langle \uta^{-k}\ul^i\rangle_{i,k\geq 1}\xrightarrow{4}\Z\langle \uta^{-k}\ul^i\rangle_{i,k\geq 1}\to \Z/4\langle \uta^{-k}\ul^i\rangle_{i,k\geq 1}\to 0,\\
 &0\to \Z\langle 4\ul^{-j}\rangle_{j\geq 2}[\uta^{-1}]\xrightarrow{1}\Z\langle 4\ul^{-j}\rangle_{j\geq 1}[\uta^{-1}]\to 0\to 0,\\
 &0\to 4\Z\langle 2\uta^k\ul^{-1}\rangle_{k\geq 1}\xrightarrow{2}\Z\langle 2\uta^k\ul^{-1}\rangle_{k\geq 1}\to \Z/2\langle \frac{\atal}{\al}\rangle[\uta]\to 0,\\
 &0\to \Z\langle 4(\frac{\uta}{\ul})^k\rangle_{k\geq 1}\langle \ul^{-i}\rangle_{i\geq 1}\xrightarrow{}\Z\langle 4(\frac{\uta}{\ul})^k\rangle_{k\geq 1}\langle \ul^{-i}\rangle_{i\geq 1}\to 0 \to 0,\label{uext2}
\end{split}
\end{equation}
\end{Prop}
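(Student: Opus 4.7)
The strategy is to systematically compare generators of $C$ and $K$ by their $RO(C_4)$-bigradings, reducing the analysis to finitely many potentially non-split extensions; then to detect each surviving extension through the natural map $H \to H^h$ and the multiplicative relations in
\[
H^h_{\bigstar} = \Z[\aal,\al,\uta^{\pm},\ul^{\pm}]/(2\aal,\, 4\al,\, \atal\ul - 2\al\uta).
\]

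First I would enumerate the generators of $C$ and $K = K_1 \oplus K_2$ by their $RO(C_4)$-bigradings. The $\Si$-prefixed classes in $C$ live in bigradings with strictly negative underlying integer grading, while the $4\Z[\uta^{\pm}, \ul^{\pm}]$ portion of $C$ and all torsion generators of $K$ occupy bigradings whose underlying integer grading is even. A careful enumeration should show that the only pairs $(c, k)$ with $c \in C$, $k \in K$ in compatible bidegrees are precisely those appearing in the six listed extensions together with the exotic cases (\ref{exotic}); every other pair would contribute a split summand of $H\uZ_{\bigstar}$.

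Next, for each candidate non-split extension, I would pick a lift $\tilde{x} \in H\uZ_{\bigstar}$ of a $K$-generator $x$ and trace $n\tilde{x}$, where $n$ is the order of $x$, under the canonical map $H\uZ \to H^h$. Since $\Z[\uta^{\pm}, \ul^{\pm}] \subset H^h_{\bigstar}$ is torsion-free, non-vanishing of $n\tilde{x}$ there establishes non-triviality of the extension. Extensions (1), (2), (3) then follow immediately from the obvious lifts $1$, $2\uta^{-i}$, $\uta^{-k}\ul^i$ respectively, since $4$, $4\uta^{-i}$, $4\uta^{-k}\ul^i$ are non-zero in $H^h_{\bigstar}$. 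For extension (5) I would invoke the gold relation: $\frac{\atal}{\al}\uta^{k-1} = \frac{2\uta^k}{\ul}$ in $H^h_{\bigstar}$, so any lift $\tilde{x}$ of $\frac{\atal}{\al}\uta^{k-1}$ satisfies $2\tilde{x} \mapsto \frac{4\uta^k}{\ul}$, which is the $4\Z$-generator labelled $2\uta^k\ul^{-1}$. Extensions (4) and (6) simply assert that certain $C$-pieces inject into $H\uZ_{\bigstar}$ with no matching $K$-class, which is immediate from the bookkeeping of the first step.

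The hard part will be the exhaustive bidegree enumeration, making sure no potential pairing has been overlooked, and distinguishing the ``elementary'' extensions above from the exotic ones (\ref{exotic}); the latter arise from multiplicative relations in $H\uZ_{\bigstar}$ that are invisible to the additive detection via $H^h$ and require a separate argument, presumably using the $\PP$-Tate square of Section \ref{methodone} to produce additional classes (such as lifts of $\tre$) rather than the main Tate square alone.
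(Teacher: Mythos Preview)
Your detection strategy via $H\to H^h$ is the right instinct, but there is a genuine gap for the $\Z/4$-quotients in Lines 1 and 3. Showing that $n\tilde{x}$ is nonzero in $H^h_{\bigstar}$ only shows the extension $0\to\Z\to H_V\to\Z/n\to 0$ is non-split; when $n=4$ this still permits $H_V\cong\Z\oplus\Z/2$ (extension class $2\in\Z/4\cong\operatorname{Ext}^1(\Z/4,\Z)$). You would need the extension class to be a \emph{unit} in $\Z/4$, i.e.\ that $4\tilde{x}$ hits the generator of $C_V$ rather than twice it. This is fixable by chasing through $H\to\HPt\to H^t$ and $H\to H^h\to H^t$ simultaneously to pin down the image of $\tilde{x}$ in $H^h_V$ modulo $4$, but as written your argument does not do this. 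There is also a circularity in your ``obvious lifts $1$, $2\uta^{-i}$, $\uta^{-k}\ul^i$'': for Line 3 the symbol $\uta^{-k}\ul^i$ is not yet known to name an element of $H_{\bigstar}$ --- that is exactly what the extension is supposed to establish.

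The paper sidesteps both issues by proving directly that $H_V\to H^h_V$ is injective in the relevant degrees. The observation is that the fiber $F$ of $H\to H^h$ coincides with the fiber of $\HPt\to H^t$ (both vertical cofibers in the Tate square are $H_h$, so the vertical fibers agree). One then reads off from the already-computed $\HPt_{\bigstar}$ and $H^t_{\bigstar}$ that $\HPt_V\to H^t_V$ is injective and $\pi_{V+1}H^t=0$, forcing $\pi_VF=0$. This gives $H_V\hookrightarrow H^h_V=\Z$ immediately, hence $H_V\cong\Z$, with no extension-class bookkeeping and no need to name lifts in advance. Your approach and the paper's ultimately use the same square, but the fiber-$F$ framing is what makes the argument go through cleanly.
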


\begin{proof}
Once we proved the middle group are $\Z$'s, the name of the generators are pretty straightforward by mapping it to $H^h$. To see the middle groups are $\Z$'s (we only need to care about the cases in Line 1,2,3,5), let us consider the Tate square from another perspective:
\begin{equation}
	\xymatrix{
	F\ar[r]^{\simeq}\ar[d] &F\ar[d]\\
	H\ar[r]\ar[d] &\HPt\ar[d]\\
	H^h\ar[r] &H^t.
}
\end{equation}
Here the two $F$'s are the fibers of the obvious vertical maps. We know that for a class in ${H^h}_{\bigstar}$ which is in the subring $\Z[\uta^{\pm},\ul^{\pm}]$, it generate a $\Z$. Since the only subgroups of $\Z$ are $d\Z,d\in\Z_{\geq 0}$, we only need to prove that the Borel completion map $H\to H^h$ induce injections in the degrees that appear in (\ref{uext2}). Equivalently, it suffices to prove $\pi_VF=0$ in the degrees that appear in (\ref{uext2}).

Just by looking at the names, we have the following exact sequences telling us Line 1 and Line 2 in (\ref{uext2}).
\begin{equation}
	\xymatrix{
		F_{V}\ar[r]^{\simeq}\ar[d] &0\ar[d]\\
		\HPt_{V}\ar[r]^-{\cong}\ar[d]&\Z/4[\uta,\ul]\ar[d]^1\\
		H^t_{V}\ar[r]^-{\cong} &\Z/4[\uta,\ul].
	}
\end{equation}

\begin{equation}
	\xymatrix{
		F_{V}\ar[r]^{\simeq}\ar[d] &0\ar[d]\\
		\HPt_{V}\ar[r]^-{\cong}\ar[d]&\Z/2\langle 2
		\uta^{-i}\rangle_{i\geq 1}\ar[d]^2\\
		H^t_{V}\ar[r]^-{\cong} &\Z/4\langle \uta^{-i}\rangle_{i\geq 1}.
	}
\end{equation}

Note here we abused the notation a little bit. $V$ denote the corresponding degrees of the elements on the right. For Line 3, if the any of the classes $4\Z\langle \uta^{-k}\ul^i\rangle_{i,k\geq 1}$ lives in degree $V$, then we have $\pi_V\HPt=\pi_{V+1}H^t=0$, thus $\pi_VF=0$.

For Line 5, we have the gold relation in $H^t_{\bigstar}$
\[
\frac{\atal}{\al}=2\frac{\uta}{\ul}.
\]
This implies that we have the short exact sequences
\begin{equation}
	\begin{split}
	\xymatrix{
		F_V\ar[r]^{\simeq}\ar[d] &0\ar[d]\\
		\HPt_V\ar[r]^-{\cong}\ar[d]&\Z/2\langle \frac{\atal}{\al}\rangle[\uta]\ar[d]^2\\
		H^t_V\ar[r]^-{\cong} &\Z/4\langle \frac{\uta}{\ul}\rangle[\uta].
	}
   \end{split}\label{uext3}
\end{equation}
Thus we have proved that the middle groups in (\ref{uext2}) are $\Z$'s.
\end{proof}

\begin{Rem}
Note that \cite[Prop 3.3]{HHRb} proved that the middle group are $\Z$'s in the case $\pi_{|V|-V}H$ and $\pi_{V-|V|}H$, where $V$ is an actual representation of $G$ and $V^G=0$. They did not consider the cases of virtual representations of underlying degree $0$ like $\pi_{2\alpha-\lambda}H=\Z\langle \uta^{-1}\ul\rangle$, $\pi_{\lambda-2\alpha}H=\Z\langle 2\uta\ul^{-1}\rangle$, etc.
\end{Rem}

In the next proposition we prove the exotic extensions. They are related to the exotic multiplication (\ref{exoticm}) in the next section. Note that they would get involved in non-trivial extension of Mackey functors if we want to generalize to $C_{2^n},n\geq 3$. See \cite[Example 5.4]{basu21} for a reference. The author of that paper discovered this in the odd primary $p$ case, but we can consider $RO(C_{p^n})\text{'}\subset\text{'} RO(C_{2^n})$ as the index $2$ subgroup of orientable representations as in \cite{Zeng17}.

\begin{Prop}[The exotic extentions]
We have the following extensions
\begin{equation}
\begin{split}
    &0\to \Z\langle 4(\frac{\uta}{\ul})^k\rangle_{k\geq 2}[\uta]\xrightarrow{}\Z\langle 4(\frac{\uta}{\ul})^k\rangle_{k\geq 2}[\uta]\oplus\Z/2\langle (\frac{\atal}{\al})^k\rangle_{k\geq 2}[\uta]\\
    &\xrightarrow{} \Z/2\langle (\frac{\atal}{\al})^k\rangle_{k\geq 2}[\uta]\to 0.\label{exotic}
\end{split}
\end{equation}
\end{Prop}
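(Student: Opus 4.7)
The plan is to split the extension in~\eqref{exotic} by producing, in each relevant degree, an explicit order-two lift of the generator $(\atal/\al)^k\uta^m \in \HPt$ for $k\ge 2$ and $m\ge 0$. The key observation is that iterating the gold relation $\atal\ul=2\al\uta$ gives the identity $(\atal/\al)^k\uta^m = 2^k(\uta/\ul)^k\uta^m$ in $H^t_{\bigstar}$, and for $k\ge 2$ this is divisible by $4$, hence vanishes in $H^t_V$. This divisibility — which fails for the $k=1$ case treated by Line~5 of the previous proposition — is exactly what supplies the extra order-two class.

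Fix $V$ to be the degree of $(\atal/\al)^k\uta^m$. A short sweep through the four summands of $\HPt_{\bigstar}$ in~\eqref{HPt}, comparing the coefficient of $\alpha$ and the underlying dimension, shows that $\HPt_V = \Z/2\langle(\atal/\al)^k\uta^m\rangle$ (every other candidate monomial is either ruled out by parity or reduces to a multiple of this one via $\aal^2\ul=2\al\uta$) and that $\HPt_{V+1} = H^t_{V+1} = 0$. The first vanishing makes $H_{h,V}\hookrightarrow\pi_V H$ injective, and the second identifies $H_{h,V}\to H^h_V$ with the inclusion $4\Z\hookrightarrow\Z$.

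Because $(\atal/\al)^k\uta^m$ vanishes in $H^t$, it lies in $\ker\delta$ and admits some lift $y\in\pi_V H$. The image of $y$ in $H^h_V = \Z\langle(\uta/\ul)^k\uta^m\rangle$ reduces mod $4$ to the image of $(\atal/\al)^k\uta^m$ in $H^t_V$, which is $0$; hence this image has the form $a\cdot 4(\uta/\ul)^k\uta^m$ for some $a\in\Z$. Letting $z\in\pi_V H$ denote the image of $4(\uta/\ul)^k\uta^m\in H_{h,V}$, the modified lift $y':=y-az$ still projects to $(\atal/\al)^k\uta^m$ in $\HPt$ but dies in $H^h_V$.

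Finally, $2y'$ dies in $\HPt$, so it is the image of some $w\in H_{h,V}$; but $2y'$ also dies in $H^h_V$, so $w$ pushes to $0$ there, and injectivity of $4\Z\hookrightarrow\Z$ forces $w=0$, hence $2y'=0$. Since $y'$ projects nontrivially to $\HPt$, it has order exactly two, and the subgroup it generates is a complementary $\Z/2$-summand to the image of $H_{h,V}$, giving the asserted splitting~\eqref{exotic}. The delicate point throughout is the coordinated use of the two rows of the main Tate square~\eqref{Tate}: the top row supplies the lift via $\ker\delta$, while the bottom row pins down its order through the injectivity of $H_h\hookrightarrow H^h$ in these specific degrees.
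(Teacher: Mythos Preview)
Your proof is correct and follows essentially the same route as the paper: both arguments hinge on the observation that $(\atal/\al)^k$ maps to $2^k(\uta/\ul)^k=0$ in $H^t_{\bigstar}$ for $k\ge 2$, and both exploit the interplay between the two rows of the main Tate square to detect the extra $\Z/2$. Your version is more explicit---you construct the splitting order-two element by hand and carefully verify the needed vanishing $\HPt_{V+1}=H^t_{V+1}=0$---whereas the paper phrases the same computation as $\pi_VF=\Z/2$ and then reads off $\pi_VH\cong\Z\oplus\Z/2$ from the (automatically split) sequence $0\to\pi_VF\to\pi_VH\to\pi_VH^h=\Z$.
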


\begin{proof}
This follows from (\ref{uext3}): $\frac{\atal}{\al}\in \HPt_V$ maps to $2\cdot\frac{\uta}{\ul}\in H^t_V$ means $(\frac{\atal}{\al})^2$ maps to $0$ as $H^t_V$ is $4$-torsion. This tells us the middle groups in (\ref{exotic}) are $\Z\oplus \Z/2$'s by the exact sequences $0\to\pi_V F=\Z/2\to \pi_VH\to \pi_VH^h=\Z\to 0$. This appeared in the exotic multiplication mentioned in \cite[Prop 6.9]{Zeng17}.

Another way to tell the exotic extension (\ref{exotic}) is that, suppose we have the extension
\begin{equation*}
\begin{split}
    0\to \Z\langle 4(\frac{\uta}{\ul})^k\rangle_{k\geq 2}\xrightarrow{2}\Z\langle 2(\frac{\uta}{\ul})^k\rangle_{k\geq 2}
    \xrightarrow{} \Z/2\langle (\frac{\atal}{\al})^k\rangle_{k\geq 2}\to 0.
\end{split}
\end{equation*}
instead. This means $2\cdot(\frac{\uta}{\ul})^k=(\frac{\atal}{\al})^k,k\geq 2$ in $H^t_{\bigstar}$, which is not true.
\end{proof}

For the rest of the terms, there is no extension for degree reasons. As a result, we have
\begin{Thm}
The top level of the $RO(C_4)$-graded Green functor $\underline{\pi_{\bigstar}}H\uZ$ is
\begin{align}
\begin{aligned}
\pi_{\bigstar}^{C_4/C_4}H\uZ&=\Z[\uta,\ul,\aal,\al]/(2\aal,4\al,\atal\ul-2\al\uta)\\
&\oplus\Z\langle 2\uta^{-i}\rangle_{i\geq 1}\\
&\oplus\Z\langle \uta^{-k}\ul^i\rangle_{i,k\geq 1}\\
&\oplus\Z\langle 2\uta^k\ul^{-1}\rangle_{k\geq 1}\\
&\oplus\Z\langle 4\ul^{-j}\rangle_{j\geq 1}[\uta^{-1}]\\
&\oplus\Z\langle 4\uta^{k}\ul^{-j}\rangle_{k\geq 1,j\geq 2}\\
&\oplus \Z/2\langle \Si\frac{\aal}{\al^i\ul^j}\rangle_{i,j\geq 1}[\uta^{\pm}]\\
&\oplus \Z/4\langle \Si\frac{1}{\al^i\ul^j}\rangle_{i\geq 1,j\geq 2}[\uta^{\pm}]\\
&\oplus \Z/4\langle \Si\frac{1}{\al^i\ul}\rangle_{i\geq 1}[\uta^{-1}]\\
&\oplus \Z/2\langle \Si\frac{\uta^j}{\al^i\ul}\rangle_{i,j\geq 1}\\
&\oplus \Z/2\langle \Si\frac{\aal}{\al^i\uta^j}\rangle_{i,j\geq 1}\\
&\oplus \Z/2\langle \Si\frac{1}{\al^i\uta^j}\rangle_{i,j\geq 1}\\
&\oplus\Z/2\langle \aal^i\al^{-j}\rangle_{i\geq 3,j\geq 1}[\uta]\\
&\oplus\Z/2\Tre[\al^{\pm}]\\
&\oplus \Z/2\langle \frac{\aal\ul^i}{\uta^j}\rangle_{i,j\geq 1}[\al]\\
&\oplus\Z/2\langle 2\uta^{-i}\rangle_{i\geq 1}\langle \al^j\rangle_{j\geq 1}\\
&\oplus \Z/4\langle \uta^{-k}\ul^{i}\rangle_{i,k\geq 1}\langle \al^j\rangle_{j\geq 1}.\label{answer}
\end{aligned}
\end{align}
\end{Thm}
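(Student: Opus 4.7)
The plan is to assemble the final answer from the two main inputs already computed: the $C_4/C_4$-level homotopy of $H_h$ (formula (\ref{H_h})) and of $\HPt$ (formula (\ref{HPt})), linked by the main Tate cofiber sequence $H_h \to H \to \HPt$. This cofiber sequence produces a four-term exact sequence
\[
0 \to C \to \pi_{\bigstar}^{C_4/C_4} H\uZ \to K \to 0,
\]
where $K = \ker(\delta)$ and $C = \mathrm{coker}(\delta)$ for the connecting homomorphism $\delta:\HPt_{\bigstar} \to {H_h}_{\bigstar-1}$. My first step is therefore to read off $K$ and $C$ by combining the decomposition $K = K_1 \oplus K_2$ and the cokernel $C$ already isolated from the two exact sequences (\ref{seq1}) and (\ref{seq2}). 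Concretely, I would factor $\delta$ through the Tate square column $\HPt \to H^t \to \Sigma H_h$ and compute it on generators, using the gold relation $\frac{\atal}{\al} = 2\frac{\uta}{\ul}$ in $H^t_{\bigstar}$ to determine which monomials are hit.

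The second step is to solve the extension problem $0 \to C \to H_{\bigstar} \to K \to 0$. The bulk of the extensions split for grading reasons because the surviving classes of $C$ (living in negative-degree $\al$-divisibility towers) sit in different $RO(C_4)$-gradings from the torsion classes of $K$. The delicate extensions are those indexed by integer pairs $\uta^i\ul^j$, where both sides of the extension are $\Z$-free or $\Z$-finite in the same grading. For these I would use a refined Tate square in which the fiber $F$ of the Borel completion map $H \to H^h$ sits above the comparison map $\HPt \to H^t$: $\pi_V F$ vanishes in a degree $V$ iff $H \to H^h$ is injective there. Chasing the comparison square and invoking the isomorphism $\pi_V \HPt \xrightarrow{\cong} \pi_V H^t$ in the relevant gradings forces $\pi_V F = 0$, pinning down the middle groups as the specific infinite cyclic extensions listed in (\ref{uext2}).

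The hard part, and where I expect the real subtlety, is the exotic extension (\ref{exotic}) controlling the classes $(\frac{\atal}{\al})^k$ for $k \geq 2$. These cannot be handled by the naive Bockstein-style argument above because the relation $\frac{\atal}{\al} \mapsto 2\frac{\uta}{\ul}$ in $H^t$ forces $(\frac{\atal}{\al})^k$ to vanish in $H^t$ (which is $4$-torsion on the relevant classes), so the class survives in $\pi_V F$ and contributes an extra $\Z/2$ summand on top of the expected $\Z$ generated by $4(\frac{\uta}{\ul})^k$. I would detect this by first computing $\pi_V F = \Z/2$ in these degrees from the short exact sequence $0 \to \pi_V F \to \pi_V H \to \pi_V H^h \to 0$, and then ruling out the alternative extension $0 \to \Z \xrightarrow{2} \Z \to \Z/2 \to 0$ by the observation that it would force $2(\frac{\uta}{\ul})^k = (\frac{\atal}{\al})^k$ in $H^t_{\bigstar}$, which fails.

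Having solved every extension, the final step is merely bookkeeping: collect the pieces of $K_1$, $K_2$, $C$, the standard non-exotic extensions from (\ref{uext2}), and the exotic $\Z \oplus \Z/2$ middle groups from (\ref{exotic}), and reorganize the resulting list of summands by the structural pattern (positive-$\alpha$-grading classes, negative-$\alpha$-grading classes mediated by $\Si$, and the integer-graded $\Z$-summands). The output is exactly (\ref{answer}).
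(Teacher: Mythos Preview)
Your proposal is correct and follows essentially the same route as the paper: compute $K$ and $C$ from the connecting homomorphism $\delta$ (factored as $\HPt \to H^t \to \Sigma H_h$ and evaluated on the split $\HPt_{\leq 0}\oplus\HPt_{\geq 1}$), then resolve the extensions using the fiber $F$ of $H\to H^h$ compared against the column $\HPt\to H^t$, with the exotic cases handled exactly as you describe. One small wording caution: when you write that you compute $\pi_V F=\Z/2$ ``from the short exact sequence $0\to\pi_V F\to\pi_V H\to\pi_V H^h\to 0$'', that sequence by itself is circular since $\pi_V H$ is the unknown; the actual input is the right-hand column $F\simeq\mathrm{fib}(\HPt\to H^t)$, which you correctly invoked earlier, and the middle column is then used to read off $\pi_V H$.
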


\begin{Rem}
\begin{enumerate}[label=(\alph*)]
\item Each $RO(G)$-degree comprises one of the $\Z,\Z/2,\Z/4$'s listed above, except the exotic ones listed in (\ref{exotic}), which are $\Z\oplus\Z/2$'s. This is very helpful when we determine the multiplicative structure.
\item For the torsion free elements, the number before them indicates their image in $H^h_{\bigstar}$. For example, $2\uta\ul^{-1}$ means the element that will map to $2\cdot \uta\ul^{-1}$ in $H^h_{\bigstar}$. It is also the element that satisfies $2\uta\ul^{-1}\cdot\ul=2\cdot\uta$.
\item There are some terms that can be collected together if the reader perfer. For example, the terms $\Z\langle \uta^{-k}\ul^{i}\rangle_{i,k\geq 1}$ and $\Z/4\langle \uta^{-k}\ul^{i}\rangle_{i,k\geq 1}\langle \al^j\rangle_{j\geq 1}$. But we choose to keep it to separate the torsion-free part and torsion part.
\end{enumerate}\label{Rem}
\end{Rem}

\section{Ring structure of \texorpdfstring{$\pi^{G/G}_{\bigstar}H\uZ$}{piH}}\label{ringstructure}
For the multiplications in (\ref{answer}), the author has determined the entire multiplicative structure, but the results are too messy to be presented here. We will just write down the general guidelines and show some specific examples. The reader should be able to easily determine the products of two arbitrary classes following this line.

Firstly, we have the positive cone which is a subring:
\[
\pi_{pos}H\uZ:=\underset{V \text{actual rep},n\in\Z}{\oplus}\pi_{n-V}^{G/G}H\uZ=\Z[\uta,\ul,\aal,\al]/(2\aal,4\al,\atal\ul=2\al\uta)
\]
and $\pi^{G/G}_{\bigstar}H\uZ$ is an algebra over it. The module structure over $\pi_{pos}H$ is straightforward as every spectrum in the Tate square (\ref{Tate}) is a $H\uZ$-module and every map including the connecting homomorphism is a $H\uZ$-module map. Thus we know multiplications by elements from the positive cone are by names, which are all we need in most applications. 

For products of general classes, except the ones in the exotic multiplications (\ref{exoticm}), Remark \ref{Rem} tells us that the candidates live in cyclic groups. Thus we can easily determine the relations by examining both sides.

For example, consider $\frac{\ul}{\uta}\cdot \Si\frac{1}{\al\ul^2}$, the only possible products are $\epsilon\cdot \Si\frac{1}{\al\ul}\uta^{-1},\epsilon =0,1$ for degree reason. But $\epsilon=0$ is impossible since we have $\frac{\ul}{\uta}\cdot \Si\frac{1}{\al\ul^2}\cdot\uta=\Si\frac{1}{\al\ul}\neq 0$. Another example is $\frac{\aal^3}{\al}\cdot \Si\frac{1}{\al^2\ul^2}=\epsilon\cdot\Si\frac{\aal\uta}{\al^2\ul^3},\epsilon=0,1$. Multiplying both sides by $\al$ rules out the possibility of $\epsilon=1$, since the left hand side will become $0$ (multiplication by $\aal^2$ will introduce $2$ by performing the gold relation, and $2$ kills $\alpha$) while the right hand side will not.

We also need to warn the reader that sometimes a more careful look is needed to determine the product.
For example, 
\[
\frac{2\uta}{\ul}\cdot\aal=0.
\]
Because of degree, $\frac{2\uta}{\ul}\cdot\aal$ can only be $\epsilon\cdot\frac{\aal^3}{\al},\epsilon=0,1$. If $\frac{2\uta}{\ul}\cdot\aal=\frac{\aal^3}{\al}$ is true, map it through $H\to \HPt\to H^t$ will give us $\frac{\aal^3}{\al}$, map it through $H\to H^h\to H^t$ will give us $0$ since in $H^h_{\bigstar}$, $\frac{2\uta}{\ul}\cdot\aal=2\cdot \frac{\uta}{\ul}\cdot\aal=0$. Thus we get a contradiction.

Products of classes involving only powers of $\ul$ and $\uta$ are determined by mapping them to $H^h_{\bigstar}$. For example $4\frac{\uta^2}{\uta^3}\cdot 4\frac{\uta^4}{\uta^1}=4\cdot 4\frac{\uta^6}{\uta^4}$, $2\uta^{-1}\cdot \frac{2\uta^2}{\ul}=2\cdot \frac{2\uta}{\ul}$.

\begin{Prop}
Except the exotic multiplications (\ref{exoticm}), products of the classes $\theta \uta^i\ul^j,\theta=1,2,4$ are determined by their images in $H^h_{\bigstar}$. Products of classes with $\Si$ in front of them are $0$.
\end{Prop}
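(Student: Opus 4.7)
The plan is to treat the two assertions separately, organized around the main Tate square (\ref{Tate}).

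For the torsion-free classes $\theta\uta^i\ul^j$ with $\theta\in\{1,2,4\}$, I would show that in each such degree the Borel completion map $H\to H^h$ is injective, whence the product in $\pi_\bigstar H$ is uniquely determined by its image in $H^h_\bigstar$. Injectivity follows from verifying $\pi_VF=0$ for the fiber $F$ of $H\to H^h$, exactly as in the argument preceding the exotic extension proposition. Because $H^h_\bigstar=\Z[\aal,\al,\uta^{\pm},\ul^{\pm}]/(4\al,2\aal,\atal\ul-2\al\uta)$ is explicit and the torsion-free generators sit inside the subring $\Z[\uta^{\pm},\ul^{\pm}]$, products reduce to ordinary integer multiplication of the $\theta$-coefficients together with addition of exponents, and lift back uniquely. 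The exotic cases (\ref{exoticm}) are precisely the product degrees where $\pi_VF\neq 0$ so that an extra $\Z/2$-summand of $\pi_VH$ is invisible to $H^h$; these must be handled separately by the exotic extension proposition already proved.

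For the $\Si$-classes, the key observation is that every such class is the image under $\iota:H_h\to H$ of a class produced by the connecting map $\pi_{\bigstar+1}H^t\to\pi_\bigstar H_h$ from the bottom row of (\ref{Tate}). Consequently every $\Si$-class maps to $0$ in $\HPt$ (by exactness of the top row $H_h\to H\to\HPt$) and to $0$ in $H^h$ (by exactness of the bottom row $H_h\to H^h\to H^t$ combined with the vertical map $H\to H^h$). Since both $H\to H^h$ and $H\to\HPt$ are ring maps, a product $xy$ of two $\Si$-classes vanishes in both $\HPt_\bigstar$ and $H^h_\bigstar$. As the Tate square is a homotopy pullback, the Mayer--Vietoris sequence
\[
\pi_{V+1}H^t\xrightarrow{\ \partial\ }\pi_VH\longrightarrow\pi_VH^h\oplus\pi_V\HPt
\]
then forces $xy$ to itself be a $\Si$-class (or zero).

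It remains to upgrade ``$\Si$-class'' to ``$0$'' in each product degree, and I expect this to be the main obstacle. I would carry it out by a short inspection of the additive list (\ref{answer}): every $\Si$-generator carries a factor $\al^{-i}$ with $i\geq 1$ (or an $\aal^{-j}$ factor for the $\Tre$-family), so a product lives in a degree of $\alpha$-depth $\geq 2$; comparing the resulting degree with the indexing ranges of the $\Si$-summands in (\ref{answer}), one checks that the product either lies in a degree containing no $\Si$-generator at all, or can be annihilated by multiplying with an appropriate power of $\al$ or $\aal$ and invoking $4\al=0$, $2\aal=0$, together with the gold relation. For products involving $\tre$, one uses that $\tre$ is infinitely $\aal$- and $\uta$-divisible, so any nonzero product would have to be, too, which is incompatible with the finite indexing in (\ref{answer}). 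This bookkeeping is the only nontrivial piece of the argument; once it is done, combined with the first paragraph it completes the proposition.
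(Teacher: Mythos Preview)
The paper does not supply a detailed proof of this proposition; it is asserted immediately after the general guideline that non-exotic degrees are cyclic (Remark \ref{Rem}(a)) and that products of $\uta,\ul$-classes can be read off in $H^h_{\bigstar}$. Your approach is more explicit than anything in the paper and is essentially correct, with two comments.

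First, your treatment of the $\tre$-family is misplaced. The classes in $\Z/2\Tre[\al^{\pm}]$ carry no $\Si$ in (\ref{answer}) and are not covered by this proposition; they are handled separately in Proposition \ref{trelist}. You should remove them from the discussion here.

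Second, the ``bookkeeping'' you flag as the main obstacle is in fact a one-line parity argument. Every $\Si$-class has \emph{odd} integer grading: the symbol $\Si=\Sigma^{-1}$ contributes $-1$ to the trivial-representation coordinate, while each of $\aal,\al,\uta,\ul$ has even integer grading. A product of two $\Si$-classes therefore lands in even integer grading. Since you have already shown such a product lies in the image of $\partial:\pi_{V+1}H^t\to\pi_VH$, which consists precisely of the $\Si$-classes in (\ref{answer}), and no $\Si$-class sits in even integer degree, the product must vanish. This replaces your proposed degree-depth analysis entirely.
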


\subsection{Multiplications involving \texorpdfstring{$\tre$}{tre}}
Here we have a complete list of multiplications involving the special class $\tre$. This class does not exist in the odd primary case of \cite{Zeng17}.

\begin{Prop}
All multiplications involving $\tre$ are:
\begin{enumerate}
    \item The module structure over $\pi_{pos}H$:

    \begin{enumerate}
        \item The ones that are determined by multiplication by names:
        \[
        \tre\frac{\al^k}{\aal^i\uta^j}\cdot\aal^r\cdot\uta^s\cdot\al^t=\tre\frac{\al^{k+t}}{\aal^{i-r}\uta^{j-s}},i-r,j-s\geq 0,k\in\Z,r,s,t\geq 0.
        \]
        \item The trivial products:
        \begin{align*}
        &\Tre[\al^{\pm}]\cdot\ul=0\\
        &\langle\tre\rangle[\aal^{-1},\al^{\pm}]\cdot\uta=0\\
        &\langle\tre\rangle[\uta^{-1}]\cdot\aal=0.
        \end{align*}
    
        \item The non-trivial ones are the implications of the following relation
        \begin{equation*}
        \tre\frac{1}{\uta^i\al^j}\cdot\aal=\Si\frac{1}{\uta^{i+1}\al^j},i\geq 0,j\geq 1. 
        \end{equation*}
        By implications we mean we can multiply or divide this relation by powers of $\aal,\al,\uta,\ul$ to get other relations.
    \end{enumerate}
    
    \item Multiplication by classes not coming from the positive cone:
        The only non-trivial multiplications are the implications of the following relation:
        \begin{equation*}
        \tre\cdot\frac{2\uta}{\ul}=\Si\frac{\aal}{\al\uta},
        \end{equation*}
        or equivalently, the relation
        \[
        \tre\frac{1}{\atal}\cdot\frac{2\uta}{\ul}=\tre\frac{1}{\al}.
        \]
\end{enumerate}    
\label{trelist}
\end{Prop}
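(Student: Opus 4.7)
The plan is to combine three tools. First, the identification $\tre = \tr(e_{3\alpha})$ lets me apply Frobenius reciprocity $\tre\cdot x = \tr(e_{3\alpha}\cdot\res(x))$, reducing products to the $G/C_2$-level where $i^*_{C_2}\HPt\simeq H[a_{2\sigma}^{-1}]$ is already understood from the previous section with coefficient ring $\pi^{G/C_2}_{\bigstar}\HPt = \Z/2[\overline{\al}^{\pm},\overline{\ul},\eal^{\pm}]$. Second, the main Tate square (\ref{Tate}) lets me move products between $H\uZ$, $\HPt$, and $H_h$ via the connecting map $\delta$. Third, Remark \ref{Rem} ensures that, away from the exotic cases (\ref{exotic}) (none of which involve $\tre$), each target $RO(C_4)$-degree of $\pi^{G/G}_{\bigstar}H\uZ$ is cyclic, so the task reduces to deciding between $0$ and a specified generator.

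The statements of Part (1a) are definitional: after lifting the classes $\tre\cdot\al^t\aal^{-i}\uta^{-j}$ from the summand $\Z/2\Tre[\al^{\pm}]$ of $\HPt_{\bigstar}$ to $H\uZ_{\bigstar}$ via the Tate square, the $H\uZ$-module structure forces multiplication by positive powers of $\aal$, $\uta$, $\al$ to act on names by associativity. The vanishing statements of Part (1b) then follow from Frobenius together with the $C_2$-level ring structure: in each case one verifies that $e_{3\alpha}\cdot\res(x)$ either vanishes (using $\res(\aal)=0$) or lies in a degree where the transfer is forced to be trivial. One must also rule out a non-trivial lift through $H_h\to H\uZ$, which is done by inspecting (\ref{H_h}) and checking that the relevant summands are zero in each of the three target-degree strata.

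The key non-trivial relation in Part (1c) comes directly from the connecting map $\delta\colon\HPt\to\Sigma H_h$. In $\HPt_{\bigstar}$ the class $\tre$ is identified (as in the remark preceding Section 4) with the formal symbol $\Si\uta^{-1}\aal^{-1}$ of the summand $\Z/2\Tre[\al^{\pm}]$, so $\tre\cdot\aal\cdot\uta^{-i}\al^{-j}$ formally equals $\Si\uta^{-(i+1)}\al^{-j}$, which does \emph{not} lie in the $\Tre$-summand of $\HPt_{\bigstar}$. Hence this product vanishes in $\HPt$ and lifts via $\delta$ to the generator of the summand $\Z/2\langle\Si\frac{1}{\al^i\uta^j}\rangle_{i,j\geq 1}$ of $H_h$; passing back through the Tate square delivers the displayed relation in $H\uZ_{\bigstar}$, and the further implications follow by $\aal$-, $\uta$-, $\al$-linearity.

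For Part (2), the relation $\tre\cdot\frac{2\uta}{\ul}=\Si\frac{\aal}{\al\uta}$ is a $\tre$-analogue of the gold relation. Multiplying both sides by $\ul$, Frobenius gives $\ul\cdot\tre\cdot\frac{2\uta}{\ul}=2\uta\cdot\tre=2\cdot\tr(\res(\uta)\cdot e_{3\alpha})=2\tre=0$ since $\tre$ is $2$-torsion; inspection of the target degree confirms the proposed right-hand side also satisfies this. To identify the product up to the unique non-zero $\Z/2$ candidate, I use the gold relation $\atal\ul=2\al\uta$ in $H^t_{\bigstar}$, where $\al$ is invertible, to rewrite $\frac{2\uta}{\ul}=\frac{\atal}{\al}$, after which two applications of (1c) produce $\tre\cdot\atal/\al=\Si\aal/(\al\uta)$. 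Completeness of the list follows by running through the remaining cyclic target degrees of $\pi^{G/G}_{\bigstar}H\uZ$ and verifying with the same methods that all other products of $\tre$ with non-positive-cone classes vanish. The main obstacle is the accurate bookkeeping: tracking the $\Si$-shifts, the connecting map $\delta$, and the cyclic target summands through the many implicated degrees is uniform but tedious.
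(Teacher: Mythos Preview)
Your overall strategy---Frobenius reciprocity, the Tate square, and cyclicity of target degrees---matches the paper's ``general guideline'' approach, and your treatment of (1a) and (1b) is fine. But there is a genuine gap in your argument for (1c), and your argument for (2) inherits it.

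For (1c) you correctly observe that $\tre\frac{1}{\uta^i\al^j}\cdot\aal$ maps to zero in $\HPt_{\bigstar}$ (the target degree there is empty). From the cofiber sequence $H_h\to H\to\HPt$ this tells you the product lies in the image of $(H_h)_{\bigstar}\to H_{\bigstar}$. But that does \emph{not} identify it as the nonzero generator: it could still be zero. Your sentence ``lifts via $\delta$ to the generator'' conflates two different things---$\delta$ is the connecting map $\HPt_{\bigstar}\to (H_h)_{\bigstar-1}$, not a lift from $H$ to $H_h$---and in any case no argument is given for nontriviality. The paper's (terse) method, which you should use instead, runs the other direction via Lemma~\ref{lemrestr}: the class $\Si\frac{1}{\uta^{i+1}\al^j}$ restricts to zero at the $G/C_2$-level (Mackey functor (17)), hence lies in $\mathrm{im}(\aal)$; but the source degree $\pi_{V+\alpha}H$ is the single $\Z/2$ generated by $\tre\frac{1}{\uta^i\al^j}$, so the multiplication must be nontrivial.

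For (2), your rewriting ``$\frac{2\uta}{\ul}=\frac{\atal}{\al}$'' is only valid in $H^t_{\bigstar}$, and both $\tre$ and $\Si\frac{\aal}{\al\uta}$ map to zero there, so that comparison is vacuous. What you actually need is the identity $\frac{2\uta}{\ul}\cdot\al=\atal$ in $H_{\bigstar}$ (obtained by clearing $\ul$ and using the gold relation), which gives $\tre\cdot\frac{2\uta}{\ul}=(\tre\frac{1}{\al}\cdot\al)\cdot\frac{2\uta}{\ul}=\tre\frac{1}{\al}\cdot\atal$; this is exactly the paper's reduction. Then $\tre\frac{1}{\al}\cdot\atal=(\tre\frac{1}{\al}\cdot\aal)\cdot\aal$, and you invoke (1c) followed by name multiplication. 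So your (2) is salvageable once (1c) is proved correctly, but as written it rests on the same unfilled gap.
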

\begin{proof}
All of the proofs are by careful analyses following the general guideline. To see the equivalence of the two relations in $(2)$, we only need to know $\tre\frac{1}{\al}\cdot\atal=\Si\frac{\aal}{\al\uta}$. This is true since the left hand side restricts to $0$ in $G/C_2$-level because of $\atal$, thus it is hit by a non-trivial multiplication by $\aal$.\\
\end{proof}

\begin{Rem}
Note here we have the multiplication:
\[
\tre\frac{1}{\al}\cdot \atal=\tre\cdot\frac{2\uta}{\ul}=\Si\frac{\aal}{\al\uta}.
\]
For elements from the exotic extensions (\ref{exotic}), we have
\[
\tre\frac{1}{\al^2}\cdot \atal^2=\tre\cdot\frac{4\uta^2}{\ul^2}=0.
\]
\end{Rem}

\subsection{The exotic multiplications}
\indent There are two cases where classes with different names can occupy the same $RO(G)$-degree. The first case is in the positive cone. For example $\atal\ul$ and $\al\uta$ live in the same degree, as well as multiples of these classes. But in these cases, there are still one generator: multiples of $\al\uta$.

The second case is the two groups of elements 
\[
\Z\langle 4(\frac{\uta}{\ul})^k\rangle_{k\geq 2}[\uta]\quad\text{and}\quad \Z/2\langle (\frac{\atal}{\al})^k\rangle_{k\geq 2}[\uta].
\] 
In these degrees, we have the exotic extensions (\ref{exotic}):
\begin{equation*}
\begin{split}
    &0\to \Z\langle 4(\frac{\uta}{\ul})^k\rangle_{k\geq 2}[\uta]\xrightarrow{}\Z\langle 4(\frac{\uta}{\ul})^k\rangle_{k\geq 2}[\uta]\oplus\Z/2\langle (\frac{\atal}{\al})^k\rangle_{k\geq 2}[\uta]\\
    &\xrightarrow{} \Z/2\langle (\frac{\atal}{\al})^k\rangle_{k\geq 2}[\uta]\to 0.
\end{split}
\end{equation*}
Let us consider $\Z\langle 2\frac{\uta}{\ul}\rangle$. $(2\frac{\uta}{\ul})^2$ lives in one of the above degrees, thus we can suppose
\[
(2\frac{\uta}{\ul})^2=a\cdot 4(\frac{\uta}{\ul})^2+b\cdot (\frac{\atal}{\al})^2,a\in\Z, b\in \Z/2.
\]
We know $4(\frac{\uta}{\ul})^2$ maps to $0$ through $H\to \HPt$ since it comes from $H_h$. $(\frac{\atal}{\al})^2$ maps to $0$ through $H\to H^h$ since it is $2$-torsion and in the coresponding degree in $H^h_{\bigstar}$, we have $\Z\langle (\frac{\uta}{\ul})^2\rangle$. Thus we deduce $a=1$ by mapping to $H^h$, and $b=1$ by mapping through $H\to \HPt\to H^t$. (Remember that as in (\ref{uext3}), $2\frac{\uta}{\ul}$ maps to $\frac{\atal}{\al}$ in $H^t_{\bigstar}$.)

Following the same argument, we can consider higher powers. Together with $\uta$-linearity, we have
\begin{Prop}
We have the following multiplication for $k\geq 2, i\geq 0$.
\begin{equation}
(2\frac{\uta}{\ul})^k\uta^i= 4(\frac{\uta}{\ul})^k\uta^i+(\frac{\atal}{\al})^k\uta^i.
\label{exoticm}
\end{equation}
\end{Prop}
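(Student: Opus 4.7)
The plan is to exploit the exotic extension (\ref{exotic}), which presents $\pi^{G/G}_V H\uZ$ in the relevant $RO(C_4)$-degree $V = k(\lambda-2\alpha) + i(2-2\alpha)$ as a split direct sum $\Z \oplus \Z/2$ with torsion-free generator $4(\uta/\ul)^k\uta^i$ and $2$-torsion generator $(\atal/\al)^k\uta^i$. Thus $(2\uta/\ul)^k\uta^i$ decomposes uniquely as
\[
(2\uta/\ul)^k \uta^i = a \cdot 4(\uta/\ul)^k\uta^i + b \cdot (\atal/\al)^k\uta^i
\]
for some $a \in \Z$ and $b \in \Z/2$, and the task is to pin down the pair $(a,b)$.

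To extract $a$, I would push the candidate relation through the Borel completion map $H\uZ \to H^h$. The $2$-torsion summand vanishes there since $H^h_{\bigstar}$ is torsion-free in the pertinent degree, and the identity reduces to a direct computation in the Laurent description of $H^h_{\bigstar}$ from Section~2. To extract $b$, I would instead use the composite $H\uZ \to \HPt \to H^t$. The top row of the main Tate square (\ref{Tate}) is a cofibre sequence, so the torsion-free generator $4(\uta/\ul)^k\uta^i$ pulls back from $H_h$ and therefore vanishes in $\HPt$; consequently, the image of $(2\uta/\ul)^k\uta^i$ in $H^t$ equals $b\cdot(\atal/\al)^k\uta^i$. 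Applying the gold relation $\atal/\al = 2\uta/\ul$ in $H^t_{\bigstar}$ to rewrite the left-hand side then pins $b$ down. This is exactly the argument sketched just before the statement for the case $k=2$, $i=0$.

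Having established the identity at $i = 0$, the passage to arbitrary $i \geq 0$ is purely formal: every map in the Tate square (\ref{Tate}) is an $H\uZ$-module map, so multiplication by $\uta^i$ is linear, carries the decomposition to the decomposition, and promotes the base case to the general one.

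The main obstacle is the bookkeeping of Remark~\ref{Rem}(b): one must consistently track what the integer ``$4$'' in the symbol $4(\uta/\ul)^k\uta^i$ encodes, namely the image in $H^h_{\bigstar}$ rather than a literal multiplicity in $\pi^{G/G}_{\bigstar}H\uZ$, so that the $\Z$-summand generator is correctly identified in each degree and the two detection maps to $H^h$ and to $H^t$ combine coherently into the stated identity rather than into a relation off by powers of $2$.
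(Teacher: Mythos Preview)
Your proposal is correct and follows essentially the same approach as the paper: decompose $(2\uta/\ul)^k\uta^i$ in the split $\Z\oplus\Z/2$ of (\ref{exotic}), detect the $\Z$-coefficient by mapping to $H^h$ (where the torsion summand dies), detect the $\Z/2$-coefficient by mapping through $H\to\HPt\to H^t$ (where the $H_h$-born class $4(\uta/\ul)^k$ dies and the gold relation rewrites $2\uta/\ul$ as $\atal/\al$), and then invoke $\uta$-linearity for general $i$. This is exactly the argument the paper gives for $k=2$ and then extends by the same reasoning to higher $k$.
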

This is mentioned in \cite[Prop 6.9]{Zeng17} in $C_{p^2}$, $p$ odd case. It is considered exotic because the product is a sum of two generators, while in other cases the product of any two classes is expressed in terms of a single generator. The exotic ones are the only situation where this will happen. This is simply because the class $\frac{2\uta}{\ul}$ has a non-trivial image $\frac{\atal}{\al}$ in $H^t_{\bigstar}$, while other possible torsion free elements in these degrees $\frac{4\uta^k}{\ul^j},k\geq j\geq 2$ has a trivial image in $H^t_{\bigstar}$.\\

For example, we have
\[
\frac{2\uta}{\ul}\cdot 4(\frac{\uta}{\ul})^2=2\cdot 4(\frac{\uta}{\ul})^3
\]
because $4(\frac{\uta}{\ul})^2$ maps to $0$ through $H\to \HPt$ (its image through $H_{\bigstar}\to H^h_{\bigstar}\to H^t_{\bigstar}$ is 0 as $H^t_{\bigstar}$ is $4$-torsion). Similarly, another example is $
\frac{4\uta^3}{\ul^3}\cdot \frac{\ul}{\uta}=4\frac{\uta^2}{\ul^2}$.

\subsection{Reading off the localizations \texorpdfstring{$H[\aal^{-1}]_{\bigstar}$}{aal} and \texorpdfstring{$H[\al^{-1}]_{\bigstar}$}{al}} Our computation is based on localizations at the Euler classes $\al,\aal$. One we know the ring structure, it is very easy to go the other way to read off the localized rings $H[\aal^{-1}]_{\bigstar}$ and $H[\al^{-1}]_{\bigstar}$. 

Firstly, for the geometric fixed point $\HPf=H[\aal^{-1}]$. The only terms in $\pi_{\bigstar}H$ that are not killed by powers of $\aal$ are the positive cone and the terms
\[
\frac{\aal^i}{\al^j}[\uta],i\geq 3,j\geq 1.
\]
For all of the rest of the terms, we have to perform the gold relation once we get a term involving $\aal^2$. Then we get the factor $2$. Thus they will be killed if multiplied by $\aal$ again. (Most of them are killed by $\aal^3$, terms involving $\tre$ may be killed by $\aal^4$ by $(1)(c)$ in (\ref{trelist}).)

As a result, we have
\begin{equation*}
\pi_{\bigstar}\HPf=\Z/2[\uta,\atal^{\pm},\al^{\pm}].\
\end{equation*}
as in (\ref{geometric}).

Secondly, for $\HPt$, we can see that the terms not killed by a power of $\al$ in (\ref{answer}) are: the positive cone, $\Z\langle 2\uta^{-i}\rangle_{i\geq 1},\Z\langle \uta^{-k}\ul^i\rangle_{i,k\geq 1},\Z\langle 2\uta^k\ul^{-1}\rangle_{k\geq 1}$, and all terms in the last five summands of (\ref{answer}). Note that we have $\frac{2\uta}{\ul}\cdot\al=\atal$ just by multiplying both sides by $\ul$. As a result, we get $\pi_{\bigstar}\HPt$.

\section{Mackey functor structure}
In this section we determine the $RO(C_4)$-graded Green functor structure of $\underline{\pi_{\bigstar}}H\uZ$. For the definitions of $RO(G)$-graded Mackey functors and Green functors, see \cite[Sec 2,3]{LM06}.

\subsection{Generalities 
on Mackey functors and recollections of the \texorpdfstring{$C_2$}{C2}-computation} Firstly, we recall the abstract properties of a cohomological Green functor $\underline{R}$ (\cite{Web00,Zeng17}) that we will use:
\begin{enumerate}
    \item Restrictions and Weyl actions are ring homomorphisms.
    \item (cohomological condition) 
    \[
    tr^H_K\circ res^H_K=|H/K|,K\subset H\subset G.
    \]
    \item (double coset formula) 
    \[
    res^H_J\circ tr^H_K=\underset{x\in[J\backslash H/K]}{\Sigma}tr^J_{J\cap ^xK}\circ c_x\circ res^K_{J^x\cap K},J,K\subset H.
    \]
    \item (Frobenius reciprocity)
    \[
    tr^H_K(a\cdot res^H_K(b))=tr^H_K(a)\cdot b, K\subset H,a\in R(G/K),b\in R(G/H).
    \]
\end{enumerate}

In addition to the above, for any $C_4$-Mackey functor $\underline{M}$, we have
\begin{equation}
    \begin{aligned}
    &\text{im}(\res)\subset \underline{M}(C_4/C_2)^{C_2'},\\ &\text{im}(res^4_1)\subset \underline{M}(C_4/e)^{C_4}\label{fixed}.
    \end{aligned}
\end{equation}
This is deduced from applying $\underline{M}$ to the following commutative diagram:
\[
\xymatrix{
G/G\ar[r]^{c_{\gamma}=id}& G/G\\
G/H\ar[r]^{c_\gamma}\ar[u]& G/H\ar[u]
}
\]
for subgroups $H=e, C_2$. $c_{\gamma}(xH)=\gamma^3xH, x\in G$. 

For the Mackey functor structures related to the orientation classes $\uta,\ul$ and Euler classes $\aal,\al$(\cite[Prop 3.3, Def 3.4]{HHRb}), we have:
\begin{equation}
\xymatrix{
	\Z\langle\uta\rangle\ar@/_/[d]_{1}&\Z\langle\ul\rangle\ar@/_/[d]_{1}&\Z/2\langle\aal\rangle\ar@/_/[d]_{0}&\Z/4\langle\al\rangle\ar@/_/[d]_{1}\\
	\Z\langle\widebar{\uta}\rangle\ar@/_/[u]_{2}\ar@/_/[d]_{1}&\Z\langle\widebar{\ul}\rangle\ar@/_/[u]_{2}\ar@/_/[d]_{1}&0\ar@/_/[u]_{0}\ar@/_/[d]_{0}&\Z/2\langle\ats\rangle\ar@/_/[d]_{0}\ar@/_/[u]_{2}.\\
	\Z\langle\widebar{\widebar{\uta}}\rangle\ar@/_/[u]_{2}&\Z\langle\widebar{\widebar{\ul}}\rangle\ar@/_/[u]_{2}&0\ar@/_/[u]_{0}&0\ar@/_/[u]_{0}
}
\end{equation}
Note in the above, three of the restrictions are $1$ by the definition of these classes, and the transfers are deduced from the cohomological condition.

Now we keep the picture in mind:
\begin{equation}
    \underline{\pi}_{*+*\alpha+*\lambda}H\uZ=
\xymatrix{
\pi^{G/G}_{*+*\alpha+*\lambda}H\ar@/_/[d]_{\res}\\
\pi^{C_2/C_2}_{*+2*\sigma}H\ar@/_/[u]_{\tr}[\eal^{\pm}]\ar@/_/[d]_{res^2_1}\acts\gamma\\
\pi^{e/e}_{*}H\ar@/_/[u]_{tr^2_1}[\overline{\eal}^{\pm},e_{\lambda}^{\pm}]\acts\gamma.
}\label{mack}
\end{equation}
We would like to consider this picture as three commutative rings connected by restrictions and transfers. 

We quote the $C_2$-equivariant computation from \cite[Prop 6.5]{Zeng17}:
\begin{Prop}
\begin{equation}
    \begin{aligned}
    \pi_{*+*\sigma}^{C_2/C_2}H\uZ&=\Z[\uts,a_{\sigma}]/(2a_{\sigma})\\
    &\oplus \Z\langle 2\uts^{-i}\rangle_{i\geq 1}\\
    &\oplus \Z/2\langle \Si\uts^{i}a_{\sigma}^{-j}\rangle_{i,j\geq 1}
    \end{aligned}\label{C2}
\end{equation}
\end{Prop}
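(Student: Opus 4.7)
The plan is to execute the same generalized Tate square strategy used throughout the paper, one group smaller. The main Tate square for $G = C_2$ is
\[
\xymatrix{
H_h \ar[d]_{\simeq} \ar[r] & H \ar[d] \ar[r] & \widetilde{EC_2}\wedge H \ar[d] \\
H_h \ar[r] & H^h \ar[r] & H^t,
}
\]
with $H = H\uZ$. Since the only chain of proper families for $C_2$ is $\emptyset \subset \{e\}$, there is no intermediate $\PP$-homotopy limit spectral sequence to consider; only the HFPSS on the middle column appears.

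First I would compute $H^h_{\bigstar}$ via the HFPSS $E_2^{V,s} = H^s(C_2; \pi^e_V H) \Rightarrow \pi^{C_2/C_2}_{V-s}H^h$. Because $\pi^e_V H$ is concentrated in virtual representations with underlying degree $0$, the spectral sequence collapses at $E_2$ by the same \cite[Thm 7.1]{Boa99} argument used in Section 2. On the $0$-line we have $H^*(C_2;\Z) = \Z[x]/2x$ with $|x|=2$ together with the sign-representation piece $H^*(C_2;\widetilde{\Z}) = \Z/2\langle y\rangle[x]/2x$; combined with the $\uts$-periodicity coming from the shearing isomorphism $C_2/e_+ \wedge S^{2-2\sigma} \simeq C_2/e_+$, this yields $H^h_{\bigstar} = \Z[\ats, \uts^{\pm}]/(2\ats)$. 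Inverting $\ats$ gives $H^t_{\bigstar} = \Z/2[\ats^{\pm}, \uts^{\pm}]$, and the kernel--cokernel sequence for $H^h \to H^t$ produces
\[
(H_h)_{\bigstar} = 2\Z[\uts^{\pm}] \oplus \Z/2\langle \Si \ats^{-i}\rangle_{i\geq 1}[\uts^{\pm}].
\]

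Next, the right column of the Tate square is handled by the special case $\pi_{\bigstar}(\widetilde{EC_2}\wedge H) = \Z/2[\uts, \ats^{\pm}]$ of formula (\ref{geometric}). I would read off the connecting homomorphism $\delta \colon \pi_{\bigstar}(\widetilde{EC_2}\wedge H) \to \pi_{\bigstar - 1}H_h$ exactly as in Section 4: the $\ats$-negative part of $\pi_{\bigstar}(\widetilde{EC_2}\wedge H)$ maps isomorphically onto the suspended negative cone of $H_h$, while the $\ats$-non-negative part $\Z/2[\uts, \ats]$ meets the $2$-divisible torsion-free classes $2\Z\langle \uts^{-i}\rangle_{i\geq 1}$ after chasing through $H \to H^h \to H^t$. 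What remains is the extension problem $0 \to C \to H_{\bigstar} \to K \to 0$.

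The main obstacle that appears in the $C_4$ case, namely the exotic extensions stemming from the gold relation $\atal\ul = 2\al\uta$, is absent here because $C_2$ has no irreducible representation of type $\lambda$ and therefore no analogue of the class $\frac{2\uta}{\ul}$ with non-trivial image in $H^t$. The only non-trivial extension is the familiar $0 \to 2\Z \to \Z \to \Z/2 \to 0$ in integer degrees, dictated by $\pi_0 H\uZ = \Z$ and extended to all of $\pi_{*}^{C_2/C_2}H$ by $\uts$-linearity; every other extension is between torsion-free and torsion groups sitting in distinct $RO(C_2)$-degrees and splits by inspection. Reassembling the positive cone $\Z[\uts, \ats]/(2\ats)$, the $2$-divisible classes $\Z\langle 2\uts^{-i}\rangle_{i\geq 1}$, and the suspended negative cone $\Z/2\langle \Si \uts^{-i}\ats^{-j}\rangle_{i,j\geq 1}$ yields the stated presentation.
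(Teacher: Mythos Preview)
The paper does not actually prove this proposition; it is quoted verbatim from \cite[Prop 6.5]{Zeng17}. Your proposal supplies an independent proof by running the paper's own Tate-square machinery one group lower, which is a perfectly natural thing to do and is essentially the $n=1$ case of the method advertised in the introduction.

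The outline is correct, but there is a notational slip that, if taken literally, is an error. Throughout you write $\ats$, which in this paper's conventions means $a_{2\sigma}=a_\sigma^2$, whereas the Euler class for $C_2$ that you need is $a_\sigma$ itself. Your HFPSS discussion correctly produces the odd class $y\in H^1(C_2;\widetilde{\Z})$, and that class is precisely what detects $a_\sigma$; but the conclusion $H^h_{\bigstar}=\Z[\ats,\uts^{\pm}]/(2\ats)$ as written would miss all odd powers of $a_\sigma$. The intended statement is $H^h_{\bigstar}=\Z[a_\sigma,\uts^{\pm}]/(2a_\sigma)$, and similarly $H^t_{\bigstar}=\Z/2[a_\sigma^{\pm},\uts^{\pm}]$, $(H_h)_{\bigstar}=2\Z[\uts^{\pm}]\oplus\Z/2\langle\Si a_\sigma^{-i}\rangle_{i\geq 1}[\uts^{\pm}]$, and $\pi_{\bigstar}(\widetilde{EC_2}\wedge H)=\Z/2[\uts,a_\sigma^{\pm}]$.

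One further imprecision: your description of the connecting homomorphism is slightly off. The $a_\sigma$-nonnegative part $\Z/2[\uts,a_\sigma]$ of $\pi_{\bigstar}(\widetilde{EC_2}\wedge H)$ does not ``meet'' anything under $\delta$; it lies entirely in the kernel $K$ for degree reasons. The $a_\sigma$-negative part maps isomorphically only onto the $\uts^{\geq 0}$ portion of the suspended classes in $(H_h)_{\bigstar}$, leaving $\Z/2\langle\Si a_\sigma^{-j}\uts^{-i}\rangle_{i,j\geq 1}$ and all of $2\Z[\uts^{\pm}]$ in the cokernel $C$. The nontrivial extension $0\to 2\Z\to\Z\to\Z/2\to 0$ then occurs in degrees $k(2-2\sigma)$ for $k\geq 0$, pairing $2\Z\langle\uts^k\rangle\subset C$ with $\Z/2\langle\uts^k\rangle\subset K$; the remaining $2\Z\langle\uts^{-i}\rangle_{i\geq 1}$ survives untouched to give the middle summand of the proposition. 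With these corrections your argument goes through.
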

Note here we only care about $RO(C_4)$-grading, and the image of $i^*_{C_2}:RO(C_4)\to RO(C_2)$ is the subring $\{m+2n\sigma|m,n\in\Z\}\subset RO(C_2)$. Thus the only $C_2$-Mackey functors related to us are:
\begin{equation*}
\xymatrix{
	\Z[\uts]\ar@/_/[d]_{1}&\Z/2[\uts]\langle \ats^i\rangle_{i\geq 1}\ar@/_/[d]_{0}&\Z\langle 2\uts^{-i}\rangle_{i\geq 1}\ar@/_/[d]_{2}&\Z/2\langle \Si\uts^{-i}\ats^{-j}\rangle_{i,j\geq 1}\ar@/_/[d]_{0}\\
	\Z[\widebar{\uts}]\ar@/_/[u]_{2}&0\ar@/_/[u]_{0}&\Z\langle {u_{\sigma}}^{-2i}\rangle_{i\geq 1}\ar@/_/[u]_{1}&0\ar@/_/[u]_{0}\\
}
\end{equation*}
A reminder for the reader is that in the $C_2$-equivariant computation, there are the classes ${u_{\sigma}}^{\pm i}$ with $i$ odd that are not covered by non-trivial classes or torsion free classes on the $C_2/C_2$-level. The two cases are
\begin{equation*}
\xymatrix{
	0\ar@/_/[d]_{0}&\Z/2\langle \Si\uts^{-i}a_{\sigma}^{-1}\rangle_{i\geq 1}\ar@/_/[d]_{0}\\
	\Z\langle {u_{\sigma}}^{-1}\rangle[\uts]\ar@/_/[u]_{0}&\Z\langle {u_{\sigma}}^{-3}\rangle[\uts^{-1}]\ar@/_/[u]_{1}.\\
}
\end{equation*}
That the transfers in the second case are $1$'s can be seen by noticing that $\Z/2\langle \Si\uts^{-i}a_{\sigma}^{-1}\rangle$ are killed by $a_{\sigma}$. The Weyl group actions are determined by: the generator of $C_2$ act by $-1$ on ${u_{\sigma}}$ (and thus $1$ on $\widebar{\uts}$).

Here in the $C_4$-equivariant computation, we have the analogous cases:
\[
\xymatrix{
	0\ar@/_/[d]_{0}&\Z/2\langle \tre\rangle[\uta^{-1}]\ar@/_/[d]_{0}\\
	\Z\langle \eal\rangle[\etal^{-1}]\ar@/_/[u]_{0}\ar@/_/[d]_{1}&\Z\langle \ethal\rangle\ar@/_/[u]_{1}[\etal]\ar@/_/[d]_{1}\\
	\Z\langle \widebar{\eal}\rangle[\widebar{\etal}^{-1}]\ar@/_/[u]_{2}&\Z\langle \widebar{\ethal}\rangle[\widebar{\etal}]\ar@/_/[u]_{2}.
}
\]
When possible, we choose to use $\etal$ rather than $\widebar{\uta}^{-1}$ to avoid having too much bars. Of course, $\ual^{-2}$ is another way to do so.

The above $C_2$-computation tells us about $res^2_1,tr^2_1$, i.e., the two lower levels in our $C_4$-Mackey functor computation. It only concerns with $\{*+*\lambda\}$-grading. We extend the result to $RO(C_4)$-grading by $\eal$-periodicity of the two lower levels. Restrictions commute with the products, thus knowing $\widebar{\eal}=res^2_1(\eal)$ is enough. The problem is to extend $tr^2_1$ to $RO(C_4)$-grading. There are two ways to do this:
\begin{enumerate}
    \item By Frobenius reciprocity, we have $tr^2_1(\widebar{\eal}\cdot x)=\eal\cdot tr^2_1(x),x\in \pi^{G/e}_VH$. Thus we can treat $tr^2_1$ as linear over $\eal$.
    \item Using the same argument as Lemma \ref{lemrestr} below with the cofiber sequence
    \[
    {C_2/e}_+\xrightarrow{res^2_1} S^0\xrightarrow{a_{\sigma}} S^{\sigma},
    \]
    we can show $im(tr^2_1)=ker(a_{\sigma})$. Thus a non-trivial $tr^2_1$ is the same as an trivial $a_{\sigma}$ multiplication, which is the same as a trivial $a_{\sigma}\eal^i$-multiplication, $i\in \Z$. And we can extend $tr^2_1$ by $\eal$-periodicity.
\end{enumerate}

When it comes to $\res,\tr$, we use the following cofiber sequences:
\begin{equation*}
	\begin{split}
	&S^{-1}\xrightarrow{\aal}S^{\alpha-1}\to G/{C_2}_+\xrightarrow{res}S^0\xrightarrow{\aal}S^{\alpha},\\
	&S^1\xleftarrow{\aal}S^{1-\alpha}\xleftarrow{}G/{C_2}_+\xleftarrow{tr}S^0\xleftarrow{\aal}S^{-\alpha}.
	\end{split}
\end{equation*}
Applying $[-,S^{-V}\wedge H\uZ]^G$ give us exact sequences
\begin{equation}
	\begin{split}
	&\pi_{V-1}H\xleftarrow{\aal}\pi_{V+\alpha-1}H\xleftarrow{}\pi^{G/{C_2}}_VH\xleftarrow{\res}\pi_V H\xleftarrow{\aal}\pi_{V+\alpha}H,\\
	&\pi_{V+1}H\xrightarrow{\aal}\pi_{V+1-\alpha}\to \pi_V^{G/{C_2}}H\xrightarrow{\tr}\pi_VH\xrightarrow{\aal}\pi_{V-\alpha}H.
	\end{split}
\end{equation}
which tells us
\begin{Lem}
$ker(\aal)=im(\tr)$ and $im(\aal)=ker(\res)$.\label{lemrestr}
\end{Lem}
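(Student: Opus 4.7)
The plan is to deduce the lemma directly from the exactness of the two long sequences displayed immediately above it; essentially nothing further is needed once one reads off exactness at the appropriate spot. The only real work is making the identifications of the connecting maps precise, and the rest is a one-line argument at each of the two equalities.

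First, I would start with the Puppe-style cofiber sequence
\[
S^{-1}\xrightarrow{\aal}S^{\alpha-1}\to G/{C_2}_+\xrightarrow{res}S^0\xrightarrow{\aal}S^{\alpha},
\]
apply the representable contravariant functor $[-,S^{-V}\wedge H\uZ]^G$, and use the adjunction $[G/{C_2}_+,S^{-V}\wedge H\uZ]^G \cong \pi_V^{G/C_2}H$ (together with the convention, fixed in the introduction, that the map labeled $\res$ really becomes the restriction after applying $[-,S^{-V}\wedge H\uZ]^G$) to obtain the first displayed long exact sequence. Exactness at the distinguished $\pi_V H$ spot then reads $\ker(\res)=\mathrm{im}(\aal)$, which is the second claim of the lemma.

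Next I would take the Spanier-Whitehead dual of the first cofiber sequence to get the second one,
\[
S^1\xleftarrow{\aal}S^{1-\alpha}\xleftarrow{}G/{C_2}_+\xleftarrow{tr}S^0\xleftarrow{\aal}S^{-\alpha},
\]
and again apply $[-,S^{-V}\wedge H\uZ]^G$. The map labeled $tr$ becomes the transfer by the same convention (and by the fact that the transfer is Spanier-Whitehead dual to the restriction on the orbit $G/C_2$). Exactness of the resulting sequence at $\pi_V H$ gives $\ker(\aal)=\mathrm{im}(\tr)$, which is the first claim.

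The only step that is not completely formal is the identification of the homomorphisms induced by the space-level maps $res$ and $tr$ with the Mackey-functor restriction and transfer on $\underline{\pi}_V H\uZ$; but this is exactly the convention set up in the introduction, so no further argument is required. There is no genuine obstacle here — the lemma is a bookkeeping consequence of exactness, and its role in the paper is to re-express vanishing/non-vanishing of Euler class multiplications in terms of (non)triviality of transfers and restrictions, which is what gets used in the subsequent Mackey-functor tabulations.
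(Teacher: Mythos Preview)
Your argument is correct and matches the paper's primary approach: the lemma is stated immediately after the two long exact sequences and follows by reading off exactness at $\pi_V H$ in each, exactly as you do. The paper additionally remarks (following \cite[Lem~4.2]{HHRb}) that one can avoid dualizing and instead use only the first cofiber sequence together with the shearing isomorphism $\eal:\pi^{G/C_2}_V H\xrightarrow{\cong}\pi^{G/C_2}_{V+\alpha-1}H$ to identify the connecting map $\pi^{G/C_2}_V H\to \pi_{V+\alpha-1}H$ with $\tr$; this buys you a one-sequence proof but requires knowing the shearing map, whereas your two-sequence version is more symmetric and entirely formal.
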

This idea comes from \cite[Lem 4.2]{HHRb}. Following their method, we can also derive this result just from the first cofiber sequence by noticing a shearing isomorphism:
\begin{equation}
	\begin{split}
	\xymatrix{
	\pi_{V-1}H&\pi_{V+\alpha-1}H\ar[l]_-{\aal}&\pi^{G/{C_2}}_VH\ar[l]\ar[d]^{\cong}_{\eal}&\pi_V H\ar[l]_-{\res}&\pi_{V+\alpha}H\ar[l]_-{\aal},\\
	&&\pi^{G/{C_2}}_{V+\alpha-1}H\ar[ul]^{\tr}
	}
	\end{split}
\end{equation}

\subsection{Complete \texorpdfstring{$C_4$}{C4}-Mackey functor structure}
Now we list all the Mackey functors involved in $\underline{\pi^{G/G}_{\bigstar}}H\uZ$ in the order of (\ref{answer}). The non-trivial Mackey functors that has trivial $G/G$-level will be listed after this. We will number these Mackey functors and explain the methods to determine them below.

First note that the positive cone additively splits as
\begin{equation*}
    \begin{aligned}
    \pi_{pos}H\uZ&=\Z[\uta,\ul,\aal,\al]/(2\aal,4\al,\atal\ul=2\al\uta)\\
    &=\Z[\uta,\ul]\\
    &\oplus \Z/4[\uta,\ul]\langle \al^i\rangle_{i\geq 1}\\
    &\oplus \Z/2[\uta,\al]\langle \aal^i\rangle_{i\geq 1}\\
    &\oplus \Z/2[\uta,\al]\langle \ul^i\rangle_{i\geq 1}\langle \aal\rangle.
    \end{aligned}
\end{equation*}

\begin{Thm}
The Mackey functors involved in $\underline\pi_{\bigstar}H\uZ$ are the ones listed below, from $(1)$ to $(38)$.
\end{Thm}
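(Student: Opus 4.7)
\bigskip

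\noindent\emph{Proof proposal.} The plan is to traverse the additive decomposition of $\pi^{G/G}_{\bigstar}H\uZ$ given in the final theorem of section 4 term by term, and for each $RO(C_4)$-grading $V$ determine the full Mackey functor $\underline{\pi_V}H\uZ$: the two lower levels, the restrictions $\res, res^4_1$, the transfers $\tr, tr^4_1$, and the Weyl actions. Once all the pieces are in place, we collect the distinct isomorphism types into a finite list and check it has $38$ entries.

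\medskip

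First I would feed in the $C_2$-equivariant data. Via the isomorphism $\pi^{G/C_2}_V H\uZ\cong \pi^{C_2/C_2}_{i^*_{C_2}V}(i^*_{C_2}H\uZ)$ and the quoted computation (\ref{C2}), together with $res^2_1, tr^2_1$ already known from the $C_2$-Mackey structure, the bottom two levels of $\underline{\pi_V}H\uZ$ are determined whenever $V=m+n\lambda$. I extend to arbitrary $V\in RO(C_4)$ by $\eal$-periodicity, noting that Frobenius reciprocity $tr^2_1(\bar{\eal}\cdot x)=\eal\cdot tr^2_1(x)$ makes $tr^2_1$ compatible with this extension. The Weyl $C_2'$-action on the $G/C_2$-level is trivial on all classes over $\Z/2$; on the $\Z$-valued classes it is determined by how $\eal$ transforms (the generator acts by $-1$ on $\eal$ and trivially on $\overline{\uta}, \overline{\ul}$), and the images of $\res$ must land in the $C_2'$-invariants by (\ref{fixed}).

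\medskip

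Next I determine $\res$ and $\tr$ at each degree. The restrictions of the ring generators $\uta, \ul, \aal, \al, \tre$ are either given by definition or computed in the $\PP$-Tate analysis of section 3. For products of such generators one then applies multiplicativity: $\res(yz)=\res(y)\res(z)$. For the "divided" classes of the form $\Si\frac{\cdots}{\al^i\ul^j\uta^k}$ coming from the connecting homomorphism, Lemma \ref{lemrestr} gives $\res\equiv 0$, since each such class sits in $\mathrm{im}(\aal)=\ker(\res)$; similarly any top class annihilated by $\aal$ comes from $\mathrm{im}(\tr)$, and a preimage is read off from the middle level. The cohomological condition $\tr\circ\res=|C_4/C_2|=2$ and Frobenius reciprocity $\tr(\res(a)\cdot b)=a\cdot\tr(b)$ pin down the numerical coefficients of the transfers in the remaining cases; the transfer values for the Tate classes and for $\tre$-multiples listed in Proposition \ref{trelist} feed in directly.

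\medskip

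Finally I account for the \emph{hidden} Mackey functors whose $G/G$-level is zero but which nonetheless appear (analogously to the two cases flagged at the end of the $C_2$-recollection, e.g.\ the classes $\eal, \ethal, \tre/\uta^i$). These are produced by the $\eal$-periodic extension of the bottom two levels and are completely determined there: their transfers are forced by the $\aal$-periodicity argument analogous to (\ref{lemrestr}) applied to $a_\sigma$ and the cofiber sequence ${C_2/e}_+\to S^0\to S^\sigma$. The main obstacle, I expect, is not any single identification but the bookkeeping: ensuring each $RO(C_4)$-grading is assigned exactly one Mackey functor consistent with the ring structure of section 5, in particular with the gold relation $\atal\ul=2\al\uta$ and the exotic extensions (\ref{exotic}), where two potentially different transfers/restrictions must be forced to agree. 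Once this consistency check is carried out gradingwise, tallying the isomorphism types produces the list $(1)$--$(38)$.
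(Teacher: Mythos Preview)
Your overall strategy matches the paper's: feed in the known $C_2$-data for the bottom two levels via $\eal$-periodicity, then determine $\res,\tr$ at the top using Lemma~\ref{lemrestr} (the $\aal$-criterion), the cohomological condition, and Frobenius reciprocity, and finally sweep up the Mackey functors with trivial $G/G$-level. That is exactly how the paper proceeds.

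There is one concrete slip. You assert that every ``divided'' class $\Si\frac{\cdots}{\al^i\ul^j\uta^k}$ lies in $\mathrm{im}(\aal)=\ker(\res)$ and hence has $\res\equiv 0$. This is false: for instance the classes $\Si\frac{1}{\al^i\ul^j}$ with $i\geq 1,\,j\geq 2$ (type~(12)) and $\Si\frac{\uta^j}{\al^i\ul}$ (type~(14)) are \emph{not} $\aal$-divisible, and the paper shows $\res=1$ there (then $\tr=2$ by the cohomological condition). The paper in fact spends a paragraph on (12),(13) checking non-divisibility by a formal gold-relation manipulation. So Lemma~\ref{lemrestr} is the right tool, but you must apply it in both directions case by case rather than uniformly.

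A smaller omission: for the torsion-free ``divided'' classes such as $2\uta^{-i}$, $4\ul^{-j}$, $2\uta^k\ul^{-1}$ (types (5)--(9)), multiplicativity of $\res$ on positive-cone generators alone does not suffice, since these are not products of generators. The paper handles them by applying $\res$ to their defining relations (e.g.\ $4\ul^{-1}\cdot\ul=4$) or equivalently by comparing with $H^h$; you should make that step explicit.
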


The Mackey functors with non-trivial $G/G$-levels are:

\begin{equation*}
\xymatrix{
\Z[\uta,\ul]\ar@/_/[d]_{1}&\Z/4[\uta,\ul]\langle \al^i\rangle_{i\geq 1}\ar@/_/[d]_{1}&\Z/2[\uta,\al]\langle \aal^i\rangle_{i\geq 1}\ar@/_/[d]_{0}&\Z/2[\uta,\al]\langle \ul^i\rangle_{i\geq 1}\langle \aal\rangle\ar@/_/[d]_{0}\\
\Z[\widebar{\uta},\uts]\ar@/_/[u]_{2}\ar@/_/[d]_{1}&\Z/2[\widebar{\uta},\uts]\langle \ats^i\rangle_{i\geq 1}\ar@/_/[u]_{2}\ar@/_/[d]_{0}&0\ar@/_/[u]_{0}\ar@/_/[d]_{0}&0\ar@/_/[u]_{0}\ar@/_/[d]_{0}\\
\Z[\widebar{\widebar{\uta}},\widebar{\uts}]\ar@/_/[u]_{2}&0\ar@/_/[u]_{0}&0\ar@/_/[u]_{0}&0\ar@/_/[u]_{0}\\
(1) &(2) &(3) &(4)
}
\end{equation*}

\begin{equation*}
\xymatrix{
\Z\langle 2\uta^{-i}\rangle_{i\geq 1}\ar@/_/[d]_{2}&\Z\langle \uta^{-k}\ul^i\rangle_{i,k\geq 1}\ar@/_/[d]_{1}&\Z\langle 2\uta^k\ul^{-1}\rangle_{k\geq 1}\ar@/_/[d]_{1}\\
\Z\langle \etal^i\rangle_{i\geq 1}\ar@/_/[u]_{1}\ar@/_/[d]_{1}&\Z\langle \etal^k\cdot\uts^i\rangle_{i,k\geq 1}\ar@/_/[u]_{2}\ar@/_/[d]_{1}&\Z\langle 2\uts^{-1}\cdot\widebar{\uta}^k\rangle_{k\geq 1}\ar@/_/[u]_{2}\ar@/_/[d]_{2}\\
\Z\langle \widebar{\etal}^i\rangle_{i\geq 1}\ar@/_/[u]_{2}&\Z\langle \widebar{\etal}^k\cdot\uts^i\rangle_{i,k\geq 1}\ar@/_/[u]_{2}&\Z\langle \widebar{\uts}^{-1}\cdot\widebar{\widebar{\uta}}\,^k\rangle_{k\geq 1}\ar@/_/[u]_{1}.\\
(5) &(6) &(7)
}
\end{equation*}

\begin{equation*}
\xymatrix{
\Z\langle 4\ul^{-j}\rangle_{j\geq 1}[\uta^{-1}]\ar@/_/[d]_{2}&\Z\langle 4\uta^k\ul^{-j}\rangle_{k\geq 1,j\geq 2}\ar@/_/[d]_{2}&\Z/2\langle \Si\frac{\aal}{\al^i\ul^j}\rangle_{i\geq 2,j\geq 1}[\uta^{\pm}]\ar@/_/[d]_{0}\\
\Z\langle 2\uts^{-j}\rangle_{j\geq 1}[\etal]\ar@/_/[u]_{1}\ar@/_/[d]_{2}&\Z\langle 2\uts^{-j}\cdot\widebar{\uta}^k\rangle_{k\geq 1,j\geq 2}\ar@/_/[u]_{1}\ar@/_/[d]_{2}&0\ar@/_/[u]_{0}\ar@/_/[d]_{0}\\
\Z\langle \widebar{\uts}^{-j}\rangle_{j\geq 1}[\widebar{\etal}]\ar@/_/[u]_{1}&\Z\langle \widebar{\uts}^{-j}\cdot\widebar{\widebar{\uta}}\,^k\rangle_{k\geq 1,j\geq 2}\ar@/_/[u]_{1}&0\ar@/_/[u]_{0}\\
(8) &(9) &(10)
}
\end{equation*}

\begin{equation*}
\xymatrix{
\Z/2\langle \Si\frac{\aal}{\al\ul^j}\rangle_{j\geq 1}[\uta^{\pm}]\ar@/_/[d]_{0}&\Z/4\langle \Si\frac{1}{\al^i\ul^j}\rangle_{i\geq 1,j\geq 2}[\uta^{\pm}]\ar@/_/[d]_{1}&\Z/4\langle \Si\frac{1}{\al^i\ul}\rangle_{i\geq 1}[\uta^{-1}]\ar@/_/[d]_{1}\\
\Z\langle 2\uts^{-2}\eal \rangle[\widebar{\uta}^{\pm},\uts^{-1}]\ar@/_/[u]_{1}\ar@/_/[d]_{2}&\Z/2\langle \Si\frac{1}{\ats^i\uts^j}\rangle_{i\geq 1,j\geq 2}[\etal^{\mp}]\ar@/_/[u]_{2}\ar@/_/[d]_{0}&\Z/2\langle \Si\frac{1}{\ats^i\uts}\rangle_{i\geq 1}[\etal]\ar@/_/[u]_{2}\ar@/_/[d]_{0}\\
\Z\langle \widebar{\uts}^{-2}\widebar{\eal} \rangle[\widebar{\widebar{\uta}}\,^{\pm},\widebar{\uts}^{-1}]\ar@/_/[u]_{1}&0\ar@/_/[u]_{0}&0\ar@/_/[u]_{0}\\
(11) &(12) &(13)
}
\end{equation*}

\begin{equation*}
\xymatrix{
\Z/2\langle \Si\frac{\uta^j}{\al^i\ul}\rangle_{i,j\geq 1}\ar@/_/[d]_{1}&\Z/2\langle \Si\frac{\aal}{\al\uta^j}\rangle_{j\geq 1}\ar@/_/[d]_{0}\\
\Z/2\langle \Si\frac{1}{\ats^i\uts}\cdot\widebar{\uta}^j\rangle_{i,j\geq 1}\ar@/_/[u]_{0}\ar@/_/[d]_{0}&\Z\langle 2\uts^{-1}\cdot\eal\rangle[\etal]\ar@/_/[u]_{1}\ar@/_/[d]_{2}\\
0\ar@/_/[u]_{0}&\Z\langle \widebar{\uts}^{-1}\cdot\widebar{\eal}\rangle[\widebar{\etal}]\ar@/_/[u]_{1}\\
(14) &(15)
}
\end{equation*}

\begin{equation*}
\xymatrix{
\Z/2\langle \Si\frac{\aal}{\al^i\uta^j}\rangle_{i\geq 2,j\geq 1}\ar@/_/[d]_{0}&\Z/2\langle \Si\frac{1}{\al^i\uta^j}\rangle_{i,j\geq 1}\ar@/_/[d]_{0}\\
0\ar@/_/[u]_{0}\ar@/_/[d]_{0}&0\ar@/_/[u]_{0}\ar@/_/[d]_{0}\\
0\ar@/_/[u]_{0}&0\ar@/_/[u]_{0}\\
(16) &(17)
}
\end{equation*}

\begin{equation*}
\xymatrix{
\Z/2\langle \frac{\aal^3}{\al}\rangle[\aal,\uta]\ar@/_/[d]_{0}&\Z/2\langle \frac{\aal^3}{\al^2}\rangle[\al^{-1}][\uta]\ar@/_/[d]_{1}&\Z/2\langle \frac{\aal^4}{\al^3}\rangle[\al^{-1}][\uta][\frac{\atal}{\al}]\ar@/_/[d]_{0}\\
0\ar@/_/[u]_{0}\ar@/_/[d]_{0}&\Z/2\langle \Si\frac{1}{\uts\ats}\cdot u_{3\alpha}\rangle[\ats^{-1}][\widebar{\uta}]\ar@/_/[u]_{0}\ar@/_/[d]_{0}&0\ar@/_/[u]_{0}\ar@/_/[d]_{0}\\
0\ar@/_/[u]_{0}&0\ar@/_/[u]_{0}&0\ar@/_/[u]_{0}\\
(18) &(19) &(20)
}
\end{equation*}

\begin{equation*}
\xymatrix{
\Z/2\langle \frac{\aal^5}{\al^3}\rangle[\al^{-1}][\uta][\frac{\atal}{\al}]\ar@/_/[d]_{0}&\Z/2\langle \frac{\aal^4}{\al^2}\rangle[\uta][\frac{\atal}{\al}]\ar@/_/[d]_{0}\\
\Z/2\langle \Si\frac{1}{\uts^2\ats}\cdot u_{5\alpha}\rangle[\ats^{-1}][\widebar{\uta}][\frac{\widebar{\uta}}{\uts}]\ar@/_/[u]_{0}\ar@/_/[d]_{0}&\Z\langle 2\uts^{-2}\cdot\widebar{\uta}\rangle[\widebar{\uta}][\frac{\widebar{\uta}}{\uts}]\ar@/_/[u]_{0}\ar@/_/[d]_{2}\\
0\ar@/_/[u]_{0}&\Z\langle \widebar{\uts}^{-2}\cdot\widebar{\widebar{\uta}}\rangle[\widebar{\widebar{\uta}}][\frac{\widebar{\widebar{\uta}}}{\widebar{\uts}}]\ar@/_/[u]_{1}\\
(21) &(22)
}
\end{equation*}

\begin{equation*}
\xymatrix{
\Z/2\langle \frac{\aal^5}{\al^2}\rangle[\uta][\frac{\atal}{\al}][\aal]\ar@/_/[d]_{0}&\Z/2\langle \tre\rangle[\uta^{-1}]\ar@/_/[d]_{0}\\
0\ar@/_/[u]_{0}\ar@/_/[d]_{0}&\Z\langle \ethal\rangle[\etal]\ar@/_/[u]_{1}\ar@/_/[d]_{1}\\
0\ar@/_/[u]_{0}&\Z\langle \widebar{\ethal}\rangle[\widebar{\etal}]\ar@/_/[u]_{2}\\
(23) &(24)
}
\end{equation*}

\begin{equation*}
\xymatrix{
\Z/2\langle \tre\rangle[\uta^{-1}]\langle \al^i\rangle_{i\geq 1}\ar@/_/[d]_{0}&\Z/2\langle \tre\rangle[\uta^{-1}]\langle \frac{1}{\al^k}\rangle_{k\geq 1}\ar@/_/[d]_{0}\\
\Z/2\langle \ethal\rangle[\etal]\langle \ats^i\rangle_{i\geq 1}\ar@/_/[u]_{1}\ar@/_/[d]_{0}&0\ar@/_/[u]_{0}\ar@/_/[d]_{0}\\
0\ar@/_/[u]_{0}&0\ar@/_/[u]_{0}\\
(25) &(26)
}
\end{equation*}

\begin{equation*}
\xymatrix{
\Z/2\langle \tre\rangle[\uta^{-1}]\langle (\frac{\al}{\atal})^i\rangle_{i\geq 1}\ar@/_/[d]_{0}&\Z/2\langle \tre\rangle[\uta^{-1}]\langle (\frac{\al}{\atal})^i\cdot\al^k\rangle_{i,k\geq 1}\ar@/_/[d]_{0}\\
\Z\langle \ethal\rangle[\etal]\langle (\uts\cdot\etal)^i\rangle_{i\geq 1}\ar@/_/[u]_{0}\ar@/_/[d]_{1}&\Z/2\langle \ethal\rangle[\etal]\langle (\uts\cdot\etal)^i\cdot\ats^k\rangle_{i,k\geq 1}\ar@/_/[u]_{0}\ar@/_/[d]_{0}\\
\Z\langle \widebar{\ethal}\rangle[\widebar{\etal}]\langle (\widebar{\uts}\cdot\widebar{\etal})^i\rangle_{i\geq 1}\ar@/_/[u]_{2}&0\ar@/_/[u]_{0}\\
(27) &(28)
}
\end{equation*}

\begin{equation*}
\xymatrix{
\Z/2\langle \tre\rangle[\uta^{-1}]\langle (\frac{\al}{\atal})^i\cdot\al^{-k}\rangle_{i,k\geq 1}\ar@/_/[d]_{0}&\Z/2\langle \frac{\aal\ul^i}{\uta^j}\rangle_{i,j\geq 1}[\al]\ar@/_/[d]_{0}\\
0\ar@/_/[u]_{0}\ar@/_/[d]_{0}&0\ar@/_/[u]_{0}\ar@/_/[d]_{0}\\
0\ar@/_/[u]_{0}&0\ar@/_/[u]_{0}\\
(29) &(30)
}
\end{equation*}

\begin{equation*}
\xymatrix{
\Z/2\langle 2\uta^{-i}\rangle_{i\geq 1}\langle \al^j\rangle_{j\geq 1}\ar@/_/[d]_{0}&\Z/4\langle \uta^{-k}\ul^i\rangle_{i,k\geq 1}\langle \al^j\rangle_{j\geq 1}\ar@/_/[d]_{1}\\
\Z/2\langle \etal^i\rangle_{i\geq 1}\langle \ats^j\rangle_{j\geq 1}\ar@/_/[u]_{1}\ar@/_/[d]_{0}&\Z/2\langle \etal^k\cdot\uts^i\rangle_{i,k\geq 1}\langle \ats^j\rangle_{j\geq 1}\ar@/_/[u]_{2}\ar@/_/[d]_{0}\\
0\ar@/_/[u]_{0}&0\ar@/_/[u]_{0}\\
(31) &(32)
}
\end{equation*}

\begin{Rem}
Here we want to mention that in the degrees involved in the exotic extensions (\ref{exotic}), the Mackey functors split into direct sums in the category of $C_4$-Mackey functors of $(8)$ when $k\geq j\geq 2$ and $(19)$ with $G/C_2$,$G/e$-level replaced by $0$:
{\scriptsize
\begin{equation*}
\xymatrix@C=0.3em{
\Z\langle 4(\frac{\uta}{\ul})^k\rangle_{k\geq 2}[\uta]\oplus\Z/2\langle (\frac{\atal}{\al})^k\rangle_{k\geq 2}[\uta]\ar@/_/[d]_{(2\quad 0)}\,=&\Z\langle 4(\frac{\uta}{\ul})^k\rangle_{k\geq 2}[\uta]\ar@/_/[d]_2&\Z/2\langle (\frac{\atal}{\al})^k\rangle_{k\geq 2}[\uta]\ar@/_/[d]_{0}\\
\Z\langle 2\uts^{-k}\cdot\widebar{\uta}^k\rangle[\etal^{-1}]\ar@/_/[u]_{(1\quad 0)}\ar@/_/[d]_{2}&\Z\langle 2\uts^{-k}\cdot\widebar{\uta}^k\rangle[\etal^{-1}]\ar@/_/[u]_{1}\ar@/_/[d]_{2}\quad\quad\quad\oplus&0\ar@/_/[d]_{0}\ar@/_/[u]_{0}\\
\Z\langle \widebar{\uts^{-k}}\cdot\widebar{\widebar{\uta}}\,^k\rangle[\widebar{\etal}^{-1}]\ar@/_/[u]_{1}&\Z\langle \widebar{\uts^{-k}}\cdot\widebar{\widebar{\uta}}\,^k\rangle[\widebar{\etal}^{-1}]\ar@/_/[u]_{1}&0\ar@/_/[u]_{0}\\
}
\end{equation*}
}
If we want to generalize to $C_{2^n},n\geq 3$, the Mackey functors in the corresponding degrees do not split, see \cite{basu21}.
\end{Rem}

By checking the $G/C_2$-level of all the Mackey functors listed above, we find that the non-trivial Mackey functors that have trivial $G/G$-levels are (the two $i$'s in $(33)$ take the same value, same for others):
\begin{equation*}
\xymatrix{
0\ar@/_/[d]_{0}&0\ar@/_/[d]_{0}\\
\Z\langle\uts^i \rangle_{i\geq 0}\langle e_{(1+2i)\alpha}\rangle_{i\geq 0}[\widebar{\uta}]\ar@/_/[u]_{0}\ar@/_/[d]_{1}&\Z/2\langle\uts^j\rangle_{j\geq 1}\langle \ats^i\rangle_{i\geq 1}\langle e_{(2j+1)\alpha}\rangle_{j\geq 0}[\widebar{\uta}]\ar@/_/[u]_{0}\ar@/_/[d]_{0}\\
\Z\langle\widebar{\uts}^i \rangle_{i\geq 0}\langle \widebar{e_{(1+2i)\alpha}}\rangle_{i\geq 0}[\widebar{\widebar{\uta}}]\ar@/_/[u]_{2}&0\ar@/_/[u]_{0}\\
(33) &(34)
}
\end{equation*}

\begin{equation*}
\xymatrix{
0\ar@/_/[d]_{0}&0\ar@/_/[d]_{0}\\
\Z/2\langle \ats^i\rangle_{i\geq 1}\langle \eal\rangle_{j\geq 0}[\widebar{\uta}]\ar@/_/[u]_{0}\ar@/_/[d]_{0}&\Z\langle 2\uts^{-1}\rangle\langle \ual\rangle[\widebar{\uta}]\ar@/_/[u]_{0}\ar@/_/[d]_{2}\\
0\ar@/_/[u]_{0}&\Z\langle \widebar{\uts}^{-1}\rangle\langle \widebar{\ual}\rangle[\widebar{\widebar{\uta}}]\ar@/_/[u]_{1}\\
(35) &(36)
}
\end{equation*}

\begin{equation*}
\xymatrix{
0\ar@/_/[d]_{0}&0\ar@/_/[d]_{0}\\
\Z/2\langle \Si\frac{1}{\ats^i\uts}\rangle_{i\geq 1}\langle \ual\rangle[\etal]\ar@/_/[u]_{0}\ar@/_/[d]_{2}&\Z/2\langle \Si\frac{1}{\ats^i\uts^j}\rangle_{i\geq 1,j\geq 2}\langle u_{(2j-1)\alpha}\rangle[\etal]\ar@/_/[u]_{0}\ar@/_/[d]_{2}\\
0\ar@/_/[u]_{0}&0\ar@/_/[u]_{1}\\
(37) &(38)
}
\end{equation*}

By a careful degree counting, there's no Mackey functor having trivial $C_4/C_4$ and $C_4/C_2$-levels and non-trivial $C_4/e$-level.

Now we show how to determine $\res,\tr$ and Weyl actions.
\begin{itemize}
    \item $(1)-(4)$ are determined by definition.
    \item $(5)-(9)$ are determined by using the same method. Let us take $4\langle \ul^{-j}\rangle[\uta^{-1}]$ in $(8)$-as an example. By $\uta,\ul$-divisibility (multiplication by any of these two classes is an isomorphism on $4\ul^{-i}\uta^{-j},i,j\geq 1$), we only need to determine the transfer and restriction for $4\ul^{-1}$. This class satisfies $4\ul^{-1}\cdot \ul=4$. Applying $\res$ to this identity and using the fact that $\res(\ul)=\uts$ tells us that $\res(4\ul^{-1})=2\cdot 2\uts^{-1}$. Another way to see this is to map to $H^h$, and we have the following diagram
    \[
    \xymatrix{
    \Z\langle 4\ul^{-1}\rangle=H_{\lambda-2}\ar[d]_{\res}\ar[r]^-4 &H^0(C_4;\pi^{G/e}_{\lambda-2})=\Z\langle \ul^{-1}\rangle\ar[d]_{1}\\
    \Z\langle 2\uts^{-1}\rangle=H_{2\sigma-2}\ar[r]^-2&H^0(C_2;\pi^{G/e}_{2\sigma-2})=\Z\langle \uts^{-1}\rangle.
    }
    \]
    $\tr$ follows from the cohomological condition.
    \item $(11)$. The top classes are killed by $\aal$.
    \item $(12),(13)$. By the cohomological condition $\tr\circ\res=2$, we know both $\tr$ and $\res$ are non-zero. Now we can deduce $\res=1$ from (\ref{lemrestr}) by noticing the top level classes are not divisible by $\aal$, or deduce $\tr=2$ by noticing $2$ times of the top level classes are killed by $\aal$. 
    
    Here to prove the top level classes are not divisible by $\aal$, we use our old trick of formal operation. Take $\Si\frac{1}{\al^i\ul^j}$ as an example. Formally (modulo the constant $2$ in the gold relation) we have $\Si\frac{1}{\al^i\ul^j\aal}=\Si\frac{\aal}{\al^i\ul^j\atal}=\Si\frac{\aal}{\al^{i+1}\ul^{j-1}\uta}$. So we wonder whether we have $\Si\frac{\aal}{\al^{i+1}\ul^{j-1}\uta}\cdot\aal=\Si\frac{1}{\al^i\ul^j}$, which is wrong because the left hand side is $2$ times of the right hand side.
    \item $(14)$. Top classes are not hit (divisible) by $\aal$.
    \item $(15)$. Top classes are killed by $\aal$.
    \item $(19)-(22)$. Top classes are not killed by $\aal$. In $(19)$, top classes are not hit by $\aal$, while in $(21),(22)$, top classes are hit by $\aal$.
    \item $(24)-(29)$. All $\res$'s are $0$ since the top classes are divisible by $\aal$. $\tr$'s are again determined by whether the top classes are killed by $\aal$ or not. We use the multiplications involving $\tre$ (\ref{trelist}).
    \item $(31),(32)$. Determined by $\al$-linearity and $(5),(6)$.
\end{itemize}

Now for the Weyl actions. For simplicity, we use $\gamma$ for both of the actions on $G/C_2$ and $G/e$-level. We only need to worry about the $\Z$'s on the bottom two levels. From the discussion of (\ref{fixed}), any class that is hit by $\res$ is fixed by $\gamma$. Thus we are only left with $(15)$, $(22)$, $(24)$, $(27)$, $(33)$, $(36)$. However, we can notice that for all of these cases, the only thing we need to know to determine all the Weyl actions is that $\gamma\cdot \eal=-\eal$ and that $\gamma$ act as ring homomorphisms. Take $(15)$ as an example. From $(7)$ we know $\gamma\cdot 2\uts^{-1}=2\uts^{-1}$ since $2\uts^{-1}\cdot\widebar{\uta}$ is hit by $\res$. Thus the classes $2\uts^{-1}\cdot\eal^i$ has trivial action for even $i$ and $\gamma=-1$ for odd $i$.

The fact that $\gamma\cdot \eal=-\eal$ can be checked by a simple cellular computation: $\pi_{\alpha-1}^{G/C_2}H=H^1(S^{\alpha})(G/C_2)$. The latter is the cohomology of the following cochain after applying $(-)^{C_2}$:
\[
0\to\Z\xrightarrow{1+\widebar{\gamma}}\Z[G/C_2].
\]
Then $\gamma\cdot \ual=-\ual$ is deduced as a consequence.

\section{The \texorpdfstring{$\PP$}{pp}-homotopy limit spectral sequence for \texorpdfstring{$\pi_{\bigstar}F(E\PP_+,H)$}{pph}}
In this section we compute the $\PP$-homotopy limit spectral sequence for $\pi_{\bigstar}F(E\PP_+,H)$. Taking $\mathscr{F}=\mathscr{P}$ and $E=H\uZ$ in the generalized Tate square (\ref{FTate}), we get the following square:
\begin{equation}
	\xymatrix{
		E\PP_+\wedge H\ar[d]_{\simeq}\ar[r] &H\ar[d]\ar[r] &\HPf\ar[d]\\
		E\PP_+\wedge F(E\PP_+,H)\ar[r] &F(E\PP_+,H)\ar[r] &F(E\PP_+,H)[\aal^{-1}]
	}.
\end{equation}
As in subsection \ref{methodone}, the cellular filtration of $E\PP_+$ gives us the $\PP$-homotopy limit spectral sequence, of the form
\begin{equation}
	E_2^{V,s}=H^s(C_2';{\pi_V^{G/C_2}}H)\Rightarrow \pi_{V-s}^{G/G}(F(E\PP_+,H)),\,|d_r|=(r-1,r).\label{PforH}
\end{equation}
For consistency, we always use $RO(C_4)$-grading, and the coefficient ring of the group cohomology ring is
\begin{equation*}
    \pi_{*+*\alpha+*\lambda}^{G/C_2}H\cong \pi_{*+2*\sigma}^{C_2/C_2}H[\eal^{\pm}].
\end{equation*}
Note the entire spectral sequence including the target are $\uta$-local, thus we will use $\etal^{\pm}=\uta^{\mp}$ interchangeably. The action of $C_2'$ on the coefficient is determined by the parity of the power of $\eal$ with $\widebar{\gamma}\cdot\eal=-\eal$. So the coefficient ring splits as follows, where $\Z$ means trivial module, and $\widetilde{\Z}$ means that $\widebar{\gamma}$ acts as $-1$ on the generator.
\begin{Lem}
The coefficients of the group cohomology in (\ref{PforH}), as $\Z[C_2']$-modules, are
\begin{equation*}
    \begin{aligned}
    \pi_{*+*\alpha+*\lambda}^{G/C_2}H&=\Z/2[\ats,\uts]\langle\ats\rangle[\eal^{\pm}]\\
    &\oplus\Z[\uts][\etal^{\pm}]\\
    &\oplus\widetilde{\Z}[\uts][\etal^{\pm}]\langle\eal\rangle\\
    &\oplus\Z\langle 2\uts^{-i}\rangle_{i\geq 1}[\etal^{\pm}]\\
    &\oplus\widetilde{\Z}\langle 2\uts^{-i}\rangle_{i\geq 1}[\etal^{\pm}]\langle\eal\rangle\\
    &\oplus\Z/2\langle\Si\frac{1}{\ats^i\uts^j}\rangle_{i,j\geq 1}[\eal^{\pm}]
    \end{aligned}
\end{equation*}
\end{Lem}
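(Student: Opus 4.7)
The plan is to derive the lemma as a direct consequence of the quoted $C_2$-computation (\ref{C2}) from \cite[Prop 6.5]{Zeng17}, by tracking how the restriction $i^*_{C_2}:RO(C_4)\to RO(C_2)$ embeds the grading and how the $C_2'=C_4/C_2$ Weyl group acts on the resulting classes.

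First I would invoke the identification $\pi^{G/C_2}_{V}H\cong \pi^{C_2/C_2}_{i^*_{C_2}V}H$, which was already used in Section \ref{methodone}. Since $i^*_{C_2}\alpha$ is the trivial $1$-dimensional $C_2$-representation and $i^*_{C_2}\lambda=2\sigma$, a grading $m+n\alpha+k\lambda$ restricts to $(m+n)+2k\sigma$. The invertible class $\eal\in\pi^{G/C_2}_{\alpha-1}H$ records the $\alpha$-grading, giving $\pi^{G/C_2}_{*+*\alpha+*\lambda}H \cong \pi^{C_2/C_2}_{*+2*\sigma}H[\eal^{\pm}]$. Then I would extract the even-$\sigma$ part of formula (\ref{C2}): the summand $\Z[\uts,a_{\sigma}]/(2a_{\sigma})$ splits into $\Z[\uts]$ (from $a_{\sigma}^{0}$) and $\Z/2[\ats,\uts]\langle \ats\rangle$ (from the positive even powers $a_{\sigma}^{2i}=\ats^{i}$, $i\geq 1$); the summand $\Z\langle 2\uts^{-i}\rangle_{i\geq 1}$ already lies in even $\sigma$-grading; and the $2$-torsion summand contributes its even-exponent part, namely $\Z/2\langle \Si(\ats^{i}\uts^{j})^{-1}\rangle_{i,j\geq 1}$.

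Next I would determine the $\Z[C_2']$-module structure on each piece. By (\ref{fixed}), any class in the image of $\res$ is $C_2'$-fixed, so $\uts=\res(\ul)$ and $\ats=\res(a_{\lambda})$ are fixed, as are the torsion-free classes $2\uts^{-i}$ (which equal $\res$ of the top-level classes of the same name in (\ref{answer})). The $2$-torsion pieces automatically carry trivial $\Z[C_2']$-structure because $\widetilde{\Z}/2=\Z/2$. The only genuine twist is $\widebar{\gamma}\cdot\eal=-\eal$, established by the cellular argument at the end of Section $6$. Splitting $[\eal^{\pm}]=[\etal^{\pm}]\oplus[\etal^{\pm}]\langle\eal\rangle$ by the parity of the $\eal$-exponent (using $\etal=\eal^{2}$), the even-exponent part is Weyl-fixed (coefficients $\Z$) and the odd-exponent part twists by the sign (coefficients $\widetilde{\Z}$) on the torsion-free summands. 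Combining the two parity-insensitive $2$-torsion pieces with the four parity-split torsion-free pieces produces exactly the six summands listed in the lemma.

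The main obstacle is the bookkeeping: ensuring that only the even-$\sigma$ portion of (\ref{C2}) is kept, and that the $\Z$ versus $\widetilde{\Z}$ dichotomy is applied correctly to torsion-free summands while collapsing harmlessly on $2$-torsion summands (so these are listed once rather than twice). Once this is organized, no new homotopical input is required beyond the $C_2$-answer of \cite{Zeng17} and the single fact $\widebar{\gamma}\cdot\eal=-\eal$.
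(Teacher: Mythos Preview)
Your proposal is correct and follows essentially the same route as the paper: identify $\pi^{G/C_2}_{*+*\alpha+*\lambda}H\cong\pi^{C_2/C_2}_{*+2*\sigma}H[\eal^{\pm}]$, read off the even-$\sigma$ part of the $C_2$-answer (\ref{C2}), and split according to the parity of the $\eal$-exponent using $\widebar{\gamma}\cdot\eal=-\eal$. The paper states this tersely in the paragraph preceding the lemma, while you spell out the bookkeeping more explicitly; one small inaccuracy is that $2\uts^{-i}$ is not literally $\res$ of a class of the same name in (\ref{answer}), but your conclusion that it is Weyl-fixed still follows (e.g.\ from $2\uts^{-i}\cdot\uts^i=2$ with $\uts$ fixed, or from $\res(4\ul^{-i})=2\cdot 2\uts^{-i}$).
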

We have the group cohomology
\begin{equation*}
   \begin{aligned}
   &H^*(C_2';\Z)=\Z[x]/2x,|x|=2.\\
   &H^*(C_2';\widetilde{\Z})=\Z/2\langle y\rangle[x],|y|=1.\\
   &H^*(C_2';\Z/2)=\Z/2[c],|c|=1.
   \end{aligned} 
\end{equation*}
Thus we deduce
\begin{Prop}
The $E_2$-term of the spectral sequence (\ref{PforH}) is
\begin{equation}
    \begin{aligned}
    E_2^{\bigstar,*}&=\Z/2\langle\ats^i\rangle_{i\geq 1}[\uts][\eal^{\pm}][c]\\
    &\oplus\Z[\uts][\etal^{\pm}][x]/2x\\
    &\oplus\Z/2[\uts][\etal^{\pm}]\langle\eal c\rangle[x]\\
    &\oplus\Z\langle 2\uts^{-i}\rangle_{i\geq 1}[\etal^{\pm}][x]/2x\\
    &\oplus\Z/2\langle 2\uts^{-i} \rangle_{i\geq 1}\langle\eal c \rangle[\etal^{\pm}][x]\\
    &\oplus\Z/2\langle \Si\frac{1}{\ats^i\uts^j} \rangle_{i,j\geq 1}[\eal^{\pm}][c].\label{E2ofH}
    \end{aligned}
\end{equation}
with
\begin{equation*}
    |c|=(0,1),|x|=(0,2),|\ats|=(-\lambda,0),|\uts|=(2-\lambda,0),|\eal|=(\alpha-1,0).
\end{equation*}
\end{Prop}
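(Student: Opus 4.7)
The plan is to compute $E_2^{V,s}=H^s(C_2';\pi_V^{G/C_2}H)$ summand by summand, using the $\Z[C_2']$-module decomposition of $\pi_{*+*\alpha+*\lambda}^{G/C_2}H$ established in the preceding lemma, together with the three group-cohomology ring formulas $H^*(C_2';\Z)=\Z[x]/2x$, $H^*(C_2';\widetilde{\Z})=\Z/2\langle y\rangle[x]$, and $H^*(C_2';\Z/2)=\Z/2[c]$. Since $H^s(C_2';-)$ is additive and the coefficient ring splits into six $\Z[C_2']$-module summands, the computation reduces to six parallel calculations, and the multiplicative structure on $E_2$ assembles from the cup product on group cohomology together with the ring structure on $\pi^{G/C_2}_\bigstar H$.

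Concretely, for the two $\Z/2$-coefficient summands $\Z/2[\ats,\uts]\langle\ats\rangle[\eal^{\pm}]$ and $\Z/2\langle\Si\frac{1}{\ats^i\uts^j}\rangle_{i,j\geq 1}[\eal^{\pm}]$, I would tensor with $\Z/2[c]$ to recover the first and last summands of the claimed $E_2$-page. For the two trivial $\Z$-summands $\Z[\uts][\etal^{\pm}]$ and $\Z\langle 2\uts^{-i}\rangle_{i\geq 1}[\etal^{\pm}]$, tensoring with $\Z[x]/2x$ yields the second and fourth summands. For the two sign-$\widetilde{\Z}$ summands $\widetilde{\Z}[\uts][\etal^{\pm}]\langle\eal\rangle$ and $\widetilde{\Z}\langle 2\uts^{-i}\rangle_{i\geq 1}[\etal^{\pm}]\langle\eal\rangle$, tensoring with $\Z/2\langle y\rangle[x]$ produces free $\Z/2[x]$-modules with module generators $y\eal$ and $y\eal\cdot 2\uts^{-i}$.

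The main (albeit minor) step requiring care is reconciling the notation: the statement labels the sign-module classes uniformly as $\eal c$ rather than $y\eal$. I would justify this by running the long exact sequence of the $C_2'$-equivariant short exact sequence $0\to\widetilde{\Z}\xrightarrow{2}\widetilde{\Z}\to\Z/2\to 0$. Since $H^1(C_2';\widetilde{\Z})=\Z/2$ is killed by multiplication by $2$, the reduction map $H^1(C_2';\widetilde{\Z})\to H^1(C_2';\Z/2)$ is an isomorphism sending $y\mapsto c$. After this identification, the third and fifth summands appear in exactly the form stated. The degrees $|c|=(0,1)$ and $|x|=(0,2)$ are cohomological by construction, while $|\ats|=(-\lambda,0)$, $|\uts|=(2-\lambda,0)$, $|\eal|=(\alpha-1,0)$ are inherited from the $C_2$-computation. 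Beyond the reduction-mod-$2$ identification above, the proof is a mechanical application of three ring formulas, so I do not anticipate any serious obstacle.
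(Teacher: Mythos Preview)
Your proposal is correct and follows essentially the same approach as the paper: the paper simply writes ``Thus we deduce'' after listing the coefficient decomposition and the three group-cohomology formulas, and you are filling in exactly the routine summand-by-summand computation that this phrase abbreviates. Your long-exact-sequence justification for writing $\eal c$ in place of $y\eal$ is a minor variant; the paper instead appeals directly to the $E_1$-page $E_1^{\bigstar,*}=(\pi^{G/C_2}_{\bigstar}H)[c]$ to see that all cohomological classes are polynomials in a single $c$ (and notes $x=c^2$), but either route accomplishes the same identification.
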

Here we have to note that the class $x\in E_2^{0,2}=H^2(C_2';\pi_0^{G/C_2}H)$ and the entire $E_2^{\bigstar,*}$ is a module over $E_2^{*,0}=\Z[x]/2x$. Actually, $x=c^2$ by looking at the $E_1$-page
\[
E^{\bigstar,*}_1=(\pi^{G/C_2}_{\bigstar}H)[c].
\]
We also have the relation
\begin{equation*}
    (\eal c)^2=\etal\cdot x.
\end{equation*}

The classes $\eal c,x$ are permanent cycles for degree reason, since $\pi^{G/C_2}_*H$ is concentrated at $*=0$ and $|d_r|=(r-1,r)$. Now let us fix the subring of $E_2$
\begin{equation*}
    A=\Z[\ats,\uts,\uta^{\pm},\eal c]/(2\ats,2\eal c).
\end{equation*}
We regard the entire spectral sequence as a spectral sequence of $A$-algebras. This is because $d_r$ is also $\uts,\ats,\uta^{\pm}$-linear as differentials in a spectral sequence of $F(E\PP_+,H)_{\bigstar}$-algebras. Note here that the classes $\ul,\al,\uta^{\pm}\in F(E\PP_+,H)_{\bigstar}(G/G)$ act on the $E_2$-page through $\res$.

Thus, for example, in the first summand of (\ref{E2ofH}), it's a module over $A$ with generators
\[
\ats,\ats\eal,\ats c.
\]

As in the computation of $\pi_{\bigstar}F(E\PP_+,\HPt)$ in section \ref{methodone}, using the cofiber sequences
\[
S(k\alpha)_+\to S^0\to S^{k\alpha}
\]
and their natuality with respect to $k\geq 1$, we can deduce all the differentials with the knowledge of $\pi_{\bigstar}H$.

\begin{Thm}
On the $A$-module generators, the permanent cycles in the spectral sequence (\ref{PforH}) are:
\begin{equation*}
    \begin{aligned}
    1, 2\uts^{-1},\Si\frac{1}{\ats^i\uts^j},\Si\frac{1}{\ats^i\uts}c,\Si\frac{1}{\ats^i\uts}\eal,\, \text{with}\,i,j\geq 1
    \end{aligned}
\end{equation*}
The non-trivial differentials are:
\begin{equation*}
    \begin{aligned}
    &d_3(\ats\eal)=\uts\eal cx,\, d_3(\ats c)=\uts x^2,\,d_3(\Si\frac{1}{\ats^i\uts^j}c)_{j\geq 2}=\Si\frac{1}{\ats^{i+1}\uts^{j-1}}x^2,\\
    &d_3(\Si\frac{1}{\ats^i\uts^j}\eal)_{j\geq 2}=\Si\frac{1}{\ats^{i+1}\uts^{j-1}}\eal cx,\, d_2(2\uts^{-j})_{j\geq 2}=\Si\frac{1}{\ats\uts^{j-1}}x.
    \end{aligned}
\end{equation*}
\end{Thm}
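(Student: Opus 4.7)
The plan is to mirror the strategy of Section~\ref{methodone}: for each $A$-module generator on the $E_2$-page, apply $[-,\Sigma^{-V}H]^G$ to the natural maps of cofiber sequences $S(k\alpha)_+\to S^0\to S^{k\alpha}$ together with the inclusions $S(k\alpha)_+\to S((k+1)\alpha)_+$, producing commutative ladders that compute, stage by stage, whether the generator lifts through the filtration tower whose inverse limit is $F(E\PP_+,H)$. A generator represents a permanent cycle precisely when it lifts indefinitely, and the first stage at which the lift fails pins down the supporting differential. The only structural change from Section~\ref{methodone} is that we substitute $\pi_{\bigstar}H$ from~(\ref{answer}) for $\pi_{\bigstar}\HPt$; the required $\aal$-towers $\pi_{k\alpha-2}H$ and $\pi_{k\alpha-3}H$ come from Section~4 (or equivalently the Mackey-functor tables of Section~6).

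The permanent cycles are then handled by producing explicit lifts from~(\ref{answer}): $1$ is trivial; $2\uts^{-1}$ is the restriction of $2\uta^{-1}\in\pi^{G/G}_{2-2\alpha}H$ via Mackey functor~$(5)$; each $\Si\tfrac{1}{\ats^i\uts^j}$ is the restriction of $\Si\tfrac{1}{\al^i\ul^j}$ via Mackey functors $(12)$ and $(13)$; the classes $\Si\tfrac{1}{\ats^i\uts}c$ and $\Si\tfrac{1}{\ats^i\uts}\eal$ are their corresponding filtration-shifted images (detected one filtration degree higher) inside $\pi^{G/G}_{\bigstar}F(E\PP_+,H)\cong\pi^{G/G}_{\bigstar}H[\uta^{-1}]$. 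The $d_3$ differentials on $\ats c$ and $\ats\eal$ are then produced by diagram chases formally identical to those for $b$ and $\eal b$ in Section~\ref{methodone}, with~(\ref{drb}) and~(\ref{b^2}) now drawn using $\pi_{k\alpha-2}H$ and $\pi_{k\alpha-3}H$ in place of their $\HPt$ analogues; by $A$-linearity together with the permanence of the $\Si\tfrac{1}{\ats^i\uts^j}$ this propagates to the $d_3$'s on $\Si\tfrac{1}{\ats^i\uts^j}c$ and $\Si\tfrac{1}{\ats^i\uts^j}\eal$ for $j\geq 2$.

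The genuinely new feature, and the main obstacle, is the $d_2$ on $2\uts^{-j}$ for $j\geq 2$: this has no analogue in the $\HPt$ spectral sequence, where every coefficient is mod~$2$. By Mackey functor~$(8)$, the generator $4\ul^{-j}\in\pi^{G/G}_{j\lambda-2j}H$ restricts to $4\uts^{-j}=2\cdot(2\uts^{-j})$, so the top filtration piece $E_\infty^{j\lambda-2j,0}$ of the abutment must be the subgroup $2\Z\subset\Z\langle 2\uts^{-j}\rangle$ rather than the full $\Z$, forcing a $\Z/2$-quotient of $E_2^{j\lambda-2j,0}$ to be annihilated. The unique $\Z/2$-valued target of the correct bidegree is $\Si\tfrac{1}{\ats\uts^{j-1}}x$, which pins down the stated differential; an explicit chase through the cofiber sequence $S((j-1)\alpha)_+\to S(j\alpha)_+\to G/{C_2}_+\wedge S^{(j-1)\alpha}$ then confirms it. Absence of any further differentials is a finite bookkeeping check: every $A$-module generator listed in~(\ref{E2ofH}) is now accounted for, and degree considerations together with $A$-linearity rule out additional nonzero differentials.
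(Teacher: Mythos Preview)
Your overall strategy---replaying the ladder diagrams of Section~\ref{methodone} with $\pi_{\bigstar}H$ in place of $\pi_{\bigstar}\HPt$---is exactly what the paper does (``Other differentials are deduced using diagram chase''), and the Leibniz propagation of the $d_3$'s from $\ats c$ and $\ats\eal$ to $\Si\frac{1}{\ats^i\uts^j}c$ and $\Si\frac{1}{\ats^i\uts^j}\eal$ via the already-established permanent cycles $\Si\frac{1}{\ats^i\uts^j}$ is valid. Two of your shortcut justifications, however, are incorrect as stated.

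First, your claim that $2\uts^{-1}$ is the restriction of $2\uta^{-1}$ via Mackey functor~$(5)$ is a degree mismatch: $2\uta^{-1}\in\pi^{G/G}_{2\alpha-2}H$, whereas $2\uts^{-1}$ lives in $RO(C_4)$-degree $\lambda-2$. The element in $\pi^{G/G}_{\lambda-2}H$ is $4\ul^{-1}$ (Mackey functor~$(8)$), which restricts to $2\cdot(2\uts^{-1})$, so $2\uts^{-1}$ is \emph{not} in the image of restriction from $H$. The paper handles this class purely by degree: at $\lambda$-coordinate $1$ the only nonzero group in Figure~\ref{E1} sits at trivial-representation coordinate $-2$, so every $d_r$ with $r\geq 2$ has zero target.

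Second, invoking $\pi^{G/G}_{\bigstar}F(E\PP_+,H)\cong\pi^{G/G}_{\bigstar}H[\uta^{-1}]$ to pin down the permanent cycles $\Si\frac{1}{\ats^i\uts}c$ and $\Si\frac{1}{\ats^i\uts}\eal$ is circular: that identification is precisely Theorem~\ref{PcompletionofH}, which is deduced \emph{from} the present spectral sequence computation. Again the paper's argument is by degree: at $\lambda$-coordinate $i+1$ the possible targets of $d_r$ for $r\geq 2$ all lie outside the support of $\pi^{G/C_2}_{*+*\lambda}H$ shown in Figure~\ref{E1}. Your heuristic for the $d_2$ on $2\uts^{-j}$ has a similar issue---the edge-map image is controlled by $\pi_{j\lambda-2j}F(E\PP_+,H)$, not by $\pi_{j\lambda-2j}H$---but since you explicitly defer to the diagram chase for confirmation, that part is fine.
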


To illustrate the differentials, it is better to keep Figure \ref{E1} in mind. It shows the $E_1^{*+*\lambda,0}$. For the $\alpha$-grading, we make use of the $\eal$-periodicity (though be careful that $d_r$ is only $\etal^{\pm}$-linear). We have $d_r=(r-1,r)$. Thus $d_r$ preserve the power in the $\eal$'s. That is, 
\[
d_r(z\eal^k)=z'\eal^k \quad\text{with} z,z'\in E_r^{*+*\lambda,*}.
\]
This $\eal$-periodicity enable us to think of this spectral sequence as a $\Z$-graded tri-graded SS, indexed by the power of $\eal$. These SSs are independent. Since we have $\etal$-linearity, we actually only need to consider the case of $1$ and $\eal$, i.e, it is a $\Z/2$-graded tri-graded SS. $E_1^{*+*\lambda,*}$ consists of $\Z_{\geq 0}$ copies of the plane drawn in the picture above indexed by the cohomological dimension $s\geq 0$. $E_r^{*+*\lambda,*}$ is a subquotient of this structure. Similarly for $E_r^{*+*\lambda+\alpha,*}$.

\begin{center}
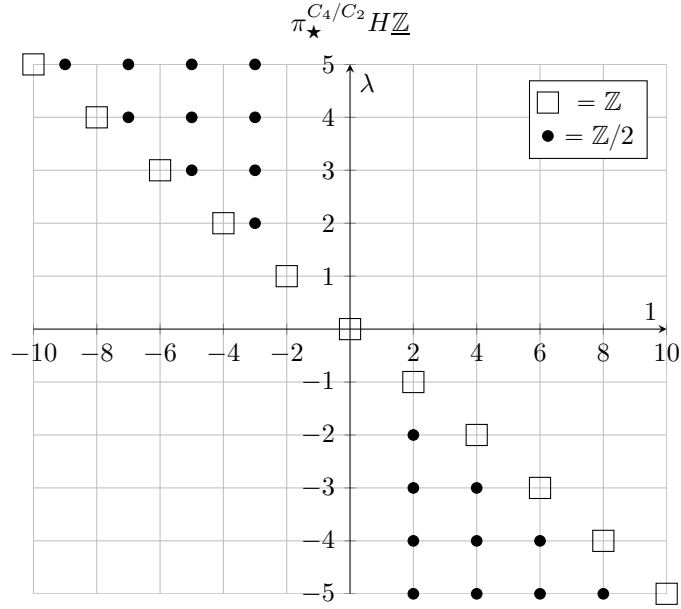
\begin{figure}
\begin{tikzpicture}
\begin{axis}[
    axis lines= center,
    title={$\pi^{C_4/C_2}_{\bigstar}H\uZ$},
    xlabel={1},
    ylabel={$\lambda$},
    xmin=-10, xmax=10,
    ymin=-5, ymax=5,
    xtick distance= 2,
    ytick distance= 1,
    legend pos= north east,
    ymajorgrids= false,
    grid = both,
]

\addplot[
    color=black,
    only marks,
    mark=square,
    mark size = 4pt
    ]
    coordinates {
    (-10,5)(-8,4)(-6,3)(-4,2)(-2,1)(0,0)(2,-1)(4,-2)(6,-3)(8,-4)(10,-5)
    };
    \legend{$=\Z$,$=\Z/2$}
    
\addplot[
    color=black,
    only marks,
    mark= *,
    mark size = 2pt
    ]
    coordinates {
    (-3,2)(-3,3)(-3,4)(-3,5)(-5,3)(-5,4)(-5,5)(-7,4)(-7,5)(-9,5)(2,-2)(2,-3)(2,-4)(2,-5)(4,-3)(4,-4)(4,-5)(6,-4)(6,-5)(8,-5)
    };
\end{axis}
\end{tikzpicture}
\caption{$E_1^{*+*\lambda,0}$}\label{E1}
\end{figure}
\end{center}

The picture right away tells us that the classes $1,\eal c,x,2\uts^{-1}$ survives to $E_{\infty}$ for degree reason. Other differentials are deduced using diagram chase. The spectral sequence collapse at $E_4$ and we have
\begin{Cor}
The $E_{\infty}$-page of (\ref{PforH}) is
\begin{equation*}
    \begin{aligned}
    E_{\infty}^{\bigstar,*}=E_4^{\bigstar,*}&=\Z/2[\ats,\uts,\etal^{\pm},\eal c]\langle\ats\rangle\\
    &\oplus\Z[\etal^{\pm},\eal c]/2\eal c\\
    &\oplus\Z[\uts,\etal^{\pm}]\langle\uts\rangle\langle 1,\eal c, x\rangle/(2\eal c,2x)\\
    &\oplus\Z\langle 2\uts^{-1}\rangle[\etal^{\pm},\eal c]/2\eal c\\
    &\oplus\Z\langle 4\uts^{-j}\rangle_{j\geq 2}[\etal^{\pm}]\\
    &\oplus\Z/2\langle\Si\frac{1}{\ats\uts^j}\rangle_{j\geq 1}[\etal^{\pm}]\langle 1,\eal c\rangle\\
    &\oplus\Z/2\langle\Si\frac{1}{\ats^i\uts^j}\rangle_{i\geq 2,j\geq 1}[\etal^{\pm}]\langle 1,\eal c,x\rangle\\
    &\oplus\Z/2\langle\Si\frac{1}{\ats^i\uts}c\rangle_{i\geq 1}[\etal,x]\\
    &\oplus\Z/2\langle\Si\frac{1}{\ats^i\uts}\eal\rangle_{i\geq 1}[\etal,x].
    \end{aligned}
\end{equation*}
\end{Cor}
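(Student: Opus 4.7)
The approach is to pass from the $E_2$-page given in the preceding proposition to $E_4$ by applying the differentials listed in the preceding theorem, and then to verify that $E_4 = E_\infty$ by a degree argument. Because the entire spectral sequence is a module over the subring $A = \Z[\ats,\uts,\uta^{\pm},\eal c]/(2\ats,2\eal c)$, every listed differential propagates by $A$-linearity, and the computation reduces to examining each $A$-module generator. Throughout, the identities $c^2=x$ and $(\eal c)^2=\etal\cdot x$ (noted just above) must be kept in mind when identifying sources, targets, and how cycles in one presentation appear in another.

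First, I would compute $E_3$ from $E_2$ using only the single family of $d_2$ differentials $d_2(2\uts^{-j})=\Si\frac{1}{\ats\uts^{j-1}}x$ for $j\geq 2$. By $\etal^{\pm}$-linearity, this family has image exactly $\Z/2\langle\Si\frac{1}{\ats\uts^j}x\rangle_{j\geq 1}[\etal^{\pm}]$, killing those classes as targets. Its kernel replaces the $\Z$-tower $\langle 2\uts^{-j}\rangle_{j\geq 2}$ by a $\Z$-tower whose generators are the multiplication-by-$2$ classes, producing $\Z\langle 4\uts^{-j}\rangle_{j\geq 2}[\etal^{\pm}]$ (summand 5 of the corollary). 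The summands 2 and 4 are what survives at cohomological filtration $0$ and $1$ from the $\Z$ and $\widetilde{\Z}$ coefficient pieces, where $d_2$ is trivial for degree reasons.

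Next, I would pass from $E_3$ to $E_4$ using the three families of $d_3$'s. The differentials $d_3(\ats c)=\uts x^2$ and $d_3(\ats\eal)=\uts\eal c x$, propagated by $A$-linearity and by $c^2=x$, cut the $\ats$-tower down so that in positive cohomological filtration only the $\ats$-free part and the single $\ats$-column survive paired against $1,\eal c,x$ and their permitted combinations. This yields the first summand together with the $x$-column in summand 3. The cascade $d_3(\Si\frac{1}{\ats^i\uts^j}c)_{j\geq 2}=\Si\frac{1}{\ats^{i+1}\uts^{j-1}}x^2$ and its $\eal$-analogue then annihilates the $c$- and $\eal$-multiples of negative-tower classes except where $j=1$, precisely producing summands 6, 8, 9; the $i\geq 2$ part of summand 7 records the $c$- and $\eal$-free survivors, enriched by $x$ via surjectivity from $c^2$.

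Finally I must verify $E_r=E_4$ for $r\geq 4$. Since $\pi^{G/C_2}_V H$ is supported only in virtual gradings of underlying dimension zero, the $E_2$-page and all later pages are supported in a strip where each column is narrow in the $V$-direction, and the horizontal span between a surviving class and any potential $d_r$-target with $r\geq 4$ crosses into bidegrees that are already trivial on $E_4$. The main obstacle is the bookkeeping in the second step: the interaction of the $c$-, $\eal c$- and $x$-multiples via $c^2=x$ and $(\eal c)^2=\etal x$ means several apparently independent $A$-module generators are identified on the $E_3$-page, and one must correctly track which generator survives in which filtration to read off exactly the direct-sum decomposition displayed in the corollary.
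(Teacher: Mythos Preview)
Your approach is essentially the same as the paper's: the paper does not give a separate proof of this corollary beyond the sentence ``The spectral sequence collapse at $E_4$ and we have,'' so the content is exactly what you describe---propagate the $d_2$ and $d_3$ differentials from the preceding theorem by $A$-linearity over each summand of the $E_2$-page, then observe that no higher differentials are possible. Your outline is more explicit than the paper's treatment, and the only point the paper adds is the Remark immediately after the corollary explaining that the $d_2$ on $\Z\langle 2\uts^{-j}\rangle_{j\geq 2}$ is the surjection $\Z\to\Z/2$ (giving the kernel $\Z\langle 4\uts^{-j}\rangle$), while on the $\eal c$-multiples it becomes the isomorphism $\Z/2\to\Z/2$, which you have folded into your $A$-linearity remark.
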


\begin{Rem}
The classes $\Z\langle 4\uts^{-j}\rangle_{j\geq 2}[\etal^{\pm}]$ comes from the differentials
\begin{equation*}
    \begin{aligned}
        d_2(\langle 2\uts^{-j}\rangle_{j\geq 2}[\etal^{\pm}])=\Si\frac{1}{\ats\uts^{j-1}}\langle x\rangle[\etal^{\pm}].
    \end{aligned}
\end{equation*}
These are the map $\Z\xrightarrow{1}\Z/2$. The kernels are denoted by $\langle 4\uts^{-j}\rangle_{j\geq 2}[\etal^{\pm}]$. When multiplied by non-trivial powers of $\eal c$, the above differentials become the isomorphism $\Z/2\xrightarrow{1}\Z/2$.
\end{Rem}
For convergence, we use the notation $u\Rightarrow v,u\in E_{\infty},v\in \pi_{\bigstar}F(E\PP_+,H)$ to mean that the class $u$ converges to the class $v$.

\begin{Prop}
On the simple generators, we have:
\begin{equation*}
    \begin{aligned}
    &1\Rightarrow 1,\ats\Rightarrow\al,\uts\Rightarrow\ul,\etal^{\pm}\Rightarrow\uta^{\mp},\\
    &\eal c\Rightarrow\etal\aal,x\Rightarrow \etal \atal,2\uts^{-1}\Rightarrow 2\ul^{-1},4\uts^{-j}\Rightarrow 4\ul^{-j}
    \end{aligned}
\end{equation*}
For more complicated terms, we have
\begin{equation*}
    \begin{aligned}
    &\Z/2\langle 2\uts^{-1}\rangle[\etal^{\pm},\eal c]\langle\eal c\rangle\Rightarrow\Z/2\langle\frac{\aal^3}{\al}\rangle[\aal,\uta^{\pm}],\\
    &\Z/2\langle\Si\frac{1}{\ats^i\uts}c\rangle_{i\geq 1}[\etal,x]
    \oplus\Z/2\langle\Si\frac{1}{\ats^i\uts}\eal\rangle_{i\geq 1}[\etal,x]\Rightarrow\Z/2\langle\frac{\aal^3}{\al^2}\rangle[\al^{-1},\aal,\uta^{\pm}].
    \end{aligned}
\end{equation*}
\end{Prop}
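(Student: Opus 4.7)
The strategy begins with the observation that the map $H\to F(E\PP_+,H)$ is a map of ring $G$-spectra, so by naturality it induces a compatible map of spectral sequences. On the $E_2$-page of (\ref{PforH}) the cohomological degree $s=0$ line is $\pi_\bigstar^{G/C_2}H$, and the edge map from $\pi_\bigstar^{G/G}F(E\PP_+,H)$ into this line agrees with the restriction along $H\to F(E\PP_+,H)$. Thus any class $\theta\in\pi_\bigstar^{G/G}H$ whose image under $\res$ is a permanent cycle on the $s=0$ line is a candidate convergence, and matching against the known abelian groups from (\ref{answer}) pins it down. This immediately delivers $1\Rightarrow 1$, $\ats\Rightarrow\al$, $\uts\Rightarrow\ul$, $\etal^{\pm}\Rightarrow\uta^{\mp}$, $2\uts^{-i}\Rightarrow 2\ul^{-i}$, and $4\uts^{-j}\Rightarrow 4\ul^{-j}$, since the targets on the right all lie in the $s=0$ associated graded and restrict to the indicated $E_2$-classes.

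For $\eal c$ and $x$ in positive filtration, I would argue that the filtration on $\pi_\bigstar^{G/G}F(E\PP_+,H)$ coming from the tower (\ref{tower}) of $S(k\alpha)_+$-skeleta is the $\aal$-adic filtration, as follows from the cofiber sequences $S(k\alpha)_+\to S^0\xrightarrow{\aal^k}S^{k\alpha}$. The class $\aal/\uta=\etal\cdot\aal\in\pi_{\alpha-2}^{G/G}H$ lifts to $F(E\PP_+,H)$, is $\aal$-divisible, and sits in filtration exactly $1$ by degree count; matching against the only generator in $E_\infty^{\alpha-1,1}$ gives $\eal c\Rightarrow\etal\aal$. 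The relation $(\eal c)^2=\etal x$ together with $(\etal\aal)^2=\etal\cdot(\etal\atal)$ forces $x\Rightarrow\etal\atal$.

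The two complicated families are handled using the gold relation $\atal\ul=2\al\uta$, equivalently $\aal^2/\al=2\uta/\ul$. Multiplying the already-established convergences gives $2\uts^{-1}\cdot\eal c\Rightarrow 2\ul^{-1}\cdot\etal\aal=2\aal/(\ul\uta)$, which the gold relation rewrites as $\aal^3/(\al\uta^2)$, a specific generator in $\Z/2\langle\aal^3/\al\rangle[\aal,\uta^{\pm}]$; linearity over $\etal^{\pm}$ and $\eal c$ then recovers the whole family. For the second family, the classes $\Si\frac{1}{\ats^i\uts}c$ and $\Si\frac{1}{\ats^i\uts}\eal$ are permanent cycles, and multiplication by the monomials in $[\etal,x]$ together with the same ring-theoretic rewriting and $\al^{-1}$-linearity identifies them with the $\al^{-i}$-divisible $\aal^3/\al^2$-multiples in $\pi_\bigstar^{G/G}F(E\PP_+,H)$.

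The main obstacle is ruling out hidden additive extensions between adjacent filtrations, since $E_\infty$ only records the associated graded. I plan to handle this by cross-checking with the $\PP$-Tate square (\ref{FTate2}) applied to $E=H$, together with $\pi_\bigstar^{G/G}\HPf=\Z/2[\uta,\aal^{\pm},\al^{\pm}]$ and the known $\pi_\bigstar^{G/G}H$ from (\ref{answer}); the exact sequences coming out of that square pin down each $RO(G)$-degree as an abelian group, and the multiplicative convergence together with the gold relation forces the remaining extensions to be as stated. A secondary subtlety is that the class $\eal$ lives only on the $G/C_2$-level while its detected target $\etal\aal$ lives on $G/G$; this is consistent because $\eal$ only appears in $E_\infty$ paired with $c$ or with $\Si$-classes that kill it under $\res$, so the Weyl action issue is automatically resolved.
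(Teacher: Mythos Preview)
Your approach is essentially the same as the paper's: the paper gives no detailed argument here and simply observes that the spectral sequence is the $\aal$-Bockstein spectral sequence, so that $F^s/F^{s+1}$ is detected by the exact power of $\aal$ dividing a class. Your use of multiplicativity and the gold relation is a legitimate way to make this explicit.

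Two small corrections. First, you write $2\uts^{-i}\Rightarrow 2\ul^{-i}$ as if for all $i$, but only $i=1$ is a permanent cycle; for $j\geq 2$ the class $2\uts^{-j}$ supports the $d_2$ listed in the preceding theorem, and only the kernel $4\uts^{-j}$ survives. Second, you justify the $s=0$ convergences by ``matching against the known abelian groups from (\ref{answer})'', but (\ref{answer}) is $\pi_{\bigstar}^{G/G}H$, not $\pi_{\bigstar}^{G/G}F(E\PP_+,H)$. In particular there is no class $2\ul^{-1}$ in $H_{\bigstar}$ (only $4\ul^{-1}$), so this convergence cannot be read off from the map $H\to F(E\PP_+,H)$. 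The correct argument is more direct: $2\uts^{-1}$ sits in $E_\infty^{\bigstar,0}$, and the edge map identifies it with an element whose restriction to the $G/C_2$-level is $2\uts^{-1}$; one then \emph{defines} $2\ul^{-1}\in\pi_{\lambda-2}^{G/G}F(E\PP_+,H)$ to be this element. The target ring in Theorem~\ref{PcompletionofH} is only assembled \emph{after} the present proposition, so it cannot be invoked here.
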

We can see that the filtration is by power's of $\aal$, i.e., $F^s/F^{s+1}=\pi_{\bigstar}F(E\PP_+,H)\langle\aal^s\rangle$, which just corresponds to the fact that this is also the $\aal$-Bockstein spectral sequence.

The only non-trivial extensions that are involved are
\begin{equation*}
    \begin{aligned}
    &0\leftarrow\Z/2\langle\al\rangle[\uta^{\pm},\ul,\al]\xleftarrow{1}\Z/4\langle\al\rangle[\uta^{\pm},\ul,\al]\xleftarrow{2}\Z/2\langle\ul\aal^2\rangle[\uta^{\pm},\ul,\al]\leftarrow 0,\\
    &0\leftarrow\Z/2\langle\Si\frac{1}{\al^{i-1}\ul^{j+1}}\rangle[\uta^{\pm}]\xleftarrow{1}\Z/4\langle\Si\frac{1}{\al^{i-1}\ul^{j+1}}\rangle[\uta^{\pm}]\xleftarrow{2}\Z/2\langle\Si\frac{1}{\al^i\ul^j}\aal^2\rangle_{i\geq 2,j\geq 1}[\uta^{\pm}]\leftarrow 0.
    \end{aligned}
\end{equation*}
and we have
\begin{Thm} We have
\begin{equation}
    \begin{aligned}
    \pi_{\bigstar}^{G/G}F(E\PP_+,H)&=\Z[\uta^{\pm},\ul,\aal,\al]/(2\aal,4\al,\atal\ul-2\al\uta)\\
    &\oplus\Z\langle 2\ul^{-1}\rangle[\uta^{\pm}]\\
    &\oplus\Z\langle 4\ul^{-j}\rangle_{j\geq 2}[\uta^{\pm}]\\
    &\oplus\Z/2\langle\Si\frac{1}{\al^i\ul}\rangle_{i\geq 1}[\uta^{\pm}]\\
    &\oplus\Z/2\langle\Si\frac{\aal}{\al^i\ul^j}_{i,j\geq 1}\rangle[\uta^{\pm}]\\
    &\oplus\Z/4\langle\Si\frac{1}{\al^i\ul^j}\rangle_{i\geq 1,j\geq 2}[\uta^{\pm}]\\
    &\oplus\Z/2\langle\frac{\aal^i}{\al^j}\rangle_{i\geq 3,j\geq 1}[\uta^{\pm}]
    \end{aligned}
\end{equation}\label{PcompletionofH}
\end{Thm}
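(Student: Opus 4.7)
The plan is to assemble the final ring by solving the extension problem $\mathrm{gr}\,\pi_{\bigstar}^{G/G}F(E\PP_+,H) \cong E_\infty^{\bigstar,*}$ coming from the preceding corollary, where the filtration is by powers of $\aal$ (the spectral sequence being an $\aal$-Bockstein, as noted for $\HPt$ in section \ref{methodone}). Strong convergence is guaranteed, as before, by \cite[Thm 7.1]{Boa99} together with collapse at $E_4$, so it suffices to lift $E_\infty$-generators to $\pi_{\bigstar}^{G/G}F(E\PP_+,H)$ and determine which filtration jumps are non-split.

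First I would use the preceding convergence proposition to name each $E_\infty$-generator on the $A$-module level: $1, \ats, \uts, \etal^{\pm}, \eal c, x, 2\uts^{-1}, 4\uts^{-j}$ converge respectively to $1, \al, \ul, \uta^{\mp}, \etal\aal, \etal\atal, 2\ul^{-1}, 4\ul^{-j}$, and the $\Sigma^{-1}$-shifted $\Z/2$'s converge to their namesakes. Since $d_r$ is $\uta^\pm, \ul, \ats, \eal c$-linear and the action on the $E_\infty$-page is via $\res$, this identification is compatible with $A$-module structure and extends to all summands listed in the corollary.

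Next I would check that outside two families, the extensions split for degree reasons. A typical surviving summand in filtration $F^s/F^{s+1}$ can only extend with a summand in $F^{s'}/F^{s'+1}$ with $s' > s$ sitting in the same $RO(C_4)$-degree, and a direct tally against the corollary shows that the $\lambda,1,\alpha$-gradings separate all surviving classes in all but two situations. In these non-problematic cases the summand becomes a direct summand of $\pi_{\bigstar}^{G/G}F(E\PP_+,H)$ matching the statement.

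The main obstacle, and the only substantive step, is settling the two $\Z/4$ extensions displayed immediately before the theorem: $\Z/2\langle\al\rangle$ in $\aal^0$-filtration with $\Z/2\langle\ul\aal^2\rangle$ in $\aal^2$-filtration, and the analogous $\Sigma^{-1}\frac{1}{\al^i\ul^j}\aal^2$ sitting over $\Sigma^{-1}\frac{1}{\al^{i-1}\ul^{j+1}}$. Degree reasons alone are insufficient. To fix them I would use the ring map $F(E\PP_+,H) \to H^t$, factoring through $\HPt$: in $H^t_{\bigstar}$ the gold relation $\atal\ul = 2\al\uta$ is an honest equality, so $\ul\aal^2 = 2\al\uta$ detects twice the generator $\al$ (up to the $\uta$-unit), forcing the extension to be $\Z/4$. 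This is the same mechanism that produced $\Z/4$ in the proof of $\pi_{\bigstar}\HPt$ in section \ref{methodone}, and it completes the identification of each remaining summand with the displayed $\Z/4$'s in the theorem.
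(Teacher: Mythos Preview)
Your proposal is correct and follows essentially the same approach as the paper: read off generators from the $E_\infty$-page via the convergence proposition, observe that all extensions are split for degree reasons except the two families displayed just before the theorem, and resolve those using the gold relation (the paper implicitly appeals to the identical mechanism from section \ref{methodone}). The paper's own treatment is in fact terser than yours, simply stating the two non-trivial extensions and concluding.
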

This coincides with $\pi_{\bigstar}^{G/G}H[\uta^{-1}]$. Actually, this computation is one of the motivations for us to think about the localization theorem proved in \cite{LocThm}.

\appendix
\section{Comparison with Nick Georgakopoulos' computation}
In this appendix we provide a comparison with Nick Georgakopoulos' computation by computer programs in \cite{NickG}. The two computations almost agree, except three typos in his final result as we will mention below. His computation has eight different parts, labeled as $6.1-6.8$. The Mackey functors inside each part are listed by bullets. We'll use notations like 
\[
6.4, L1 \,\text{with}\,\ul^{\leq -1}
\]
to refer to his classes listed in part $6.4$, Line $1$, that have power of $\ul$ less than or equal to $-1$. More precisely, his part $6.4, L1$ correspond to our classes $\Z/2\langle\Si\frac{\aal}{\al\uta}\rangle[\uta^{-1},\ul^{-1}]$, and the classes we are referring to are $\Z/2\langle\Si\frac{\aal}{\al\uta\ul^i}\rangle_{i\geq 1}[\uta^{-1}]$.

\begin{Rem}
Note that he uses $\sigma$ for the sign representation of $C_4$ while our notation for it is $\alpha$. We use $\sigma$ for the sign representation of $C_2\subset C_4$. Also we do not display the Weyl actions on the Mackey functors, since they are very easy to tell from the parity of the power of $\eal$.
\end{Rem}

Here is the comparison. We first translate some of his classes to ours. The correspondence is
\begin{center}
\begin{tabular}{ |c|c| } 
 \hline
  Ours &Nick Georgakopoulos'\\
 \hline
 $\Si\frac{1}{\ul\al}$ &$s$\\
 \hline
 $\tr(e_{n\alpha})$ &$w_n$\\
 \hline
 $\Si\frac{\aal}{\uta\al}=\tr(\ethal)\frac{\atal}{\al}$&$x_{1,1}$\\
 \hline
 $\Si\frac{\aal}{\uta\al}\uta^{-(n/2-1)}\ul^{-(m-1)}$&$x_{n,m}$\\
 \hline
\end{tabular}
\end{center}

Now we start the Mackey functors comparison. The left hand side is our Mackey functors numbered by $(1)-(38)$, the right hand side is his Mackey functors. The author will add comments in brackets.

{\scriptsize
\begin{center}
\begin{tabular}{|c|c|}
    \hline
    \text{Ours} &\text{Nick Georgakopoulos'}\\
    \hline
    $(1)\,\text{to}\,(4)+(33)+(34)$ from $\ual^{\geq 1}$ &$6.1+6.2$ (The positive cone)\\
    \hline
    $(5)$ &$6.3, L2$\\
    \hline
    $(6)$ &$6.5, L1$\\
    \hline
    $(7)$ &$6.7, L5$\\
    \hline
    $(8)$ &$6.3, L1$\\
    \hline
    $(9)$ &$6.7, L4$\\
    \hline
    $(10)+(16)$&$6.8, L3+ 6.4, L5$  (the range should be $3\leq i\leq m$ in 6.8 L3)\\
    \hline
    $(11)+(15)$&$6.8, L6+ 6.4, L1$\\
    \hline
    $(12)+(13)$&$6.3, L3+ 6.7, L2$\\
    \hline
    $(14)$&$6.7, L1$\\
    \hline
    $(17)$&$6.3, L4$ with $\aal^1$\\
    \hline
    $(18)$&Missing ($6.8, L4$ should be $0\leq i\leq \frac{n-3}{2}$ and then we get the corresponding classes)\\
    \hline
    $(19)$&$6.8, L1$\\
    \hline
    $(18)+(20)+(21)+(22)+(23)$&$6.8, L4+6.7, L3$ after the modification as in $(18)$\\
    \hline
    $(24)\,\text{to}\,(29)$&$6.3, L4\,\text{with}\,\aal^{\leq -1}+6.3, L5+6.4, L2+6.4, L6+6.4, L7+6.5, L3+6.6, L3+6.6,L5$\\
    \hline
    $(30)$&$6.6, L4$. (Note $2\aal^{-1}=\frac{\aal\ul}{\al\uta}$)\\
    \hline
    $(31)$&$6.5, L4$\\
    \hline
    $(32)$&$6.5, L2$\\
    \hline
    \hline
    $(33)$ with $i=0$ &$6.4, L3+6.2, L1\,\text{with}\, m=0$\\
    \hline
    $(27),(33) \,\text{with}\, i\geq 1$&$6.2, L1+6.6, L1$\\
    \hline
    $(34)$&$6.2, L2\,\text{with}\,\widebar{\ul}^{\geq 1}+ 6.6, L2$\\
    \hline
    $(35)$&$6.2, L2 \,\text{with}\,\widebar{\ul}^{0}+ 6.6, L6$ ($6.6, L6$ should be $n=1,m\geq 1$)\\
    \hline
    $(36)$&$6.8, L7$\\
    \hline
    $(37)$&$6.4, L4\,\text{with}\, \uts^{-1}+6.8,L2$\\
    \hline
    $(38)$&$6.4, L4\,\text{with}\, \uts^{\leq -2}+6.8, L5$\\
    \hline
\end{tabular}
\end{center}
}

\begin{Rem}
Note that some of our Mackey functors decompose into direct sums. This happens when $\tr=\res=0$. In this case, our Mackey functors decompose into direct sums of Mackey functors with trivial $G/G$-level and Mackey functors with trivial $G/C_2$ and $G/e$-levels. $(21),(22)$ are examples of this type. $(27),(28)$ are some extra examples. 

Our comparison above takes into account these decompositions. For example, in $(38)$, when we look at the elements
\[
\Z/2\langle \Si\frac{1}{\ats^i\uts^j}\rangle_{i\geq 1,j\geq 2}\cdot\eal^i,i\in \Z
\]
on the $G/C_2$-level and want to determine for what value of $i\in\Z$ do these elements not appear among $(1)-(32)$, we will notice that they appeared in $(12)$ and $(21)$, thus
the elements that have not been considered are
\[
\Z/2\langle \Si\frac{1}{\ats^i\uts^j}\rangle_{i\geq 1,j\geq 2}\langle u_{(2j-1)\alpha} \rangle[\etal].
\]
But a more careful look tells us that the Mackey functors in $(21)$ decomposes into direct sums of two summands. And taking this decomposition into account, the elements that are not 'covered' by $G/G$-level classes are
\[
\Z/2\langle \Si\frac{1}{\ats^i\uts^j}\rangle_{i\geq 1,j\geq 2}\langle \eal \rangle[\etal^{\pm}],
\]
i.e., we actually have Mackey functors of shape $(38)$ for all odd $i$.
\end{Rem}

\begin{Rem}
The author have used Nick Georgakopoulos' computer program for computing the additive structure to figure out the discrepancies.
\begin{enumerate}
    \item The author believes in his $6.8, L3$, the range should be modified to $3\leq i\leq m$. His $6.8,L3$ translates into $\Z/2\langle\Si\frac{\aal}{\al^s\ul^t}\rangle_{s\geq 1,t\geq 2}[\uta]$ in our notation. And his $6.8, L6$ translates into $\Z/2\langle\Si\frac{\aal}{\al\ul}\rangle[\ul^{-1},\uta]$ in our notation. The author believes in the first group of elements, the range should be $s\geq 2, t\geq 1$, which, in his notation, corresponds to $3\leq i\leq m$. That is because, firstly, the case $s=1,t\geq 2$ already appears in the second group, and his computer program indeed shows it is a single copy of $\langle \Z/2\rangle$ in each of these degrees. Secondly, $6.8, L3$ misses the classes in $s\geq 2,t=1$, and the computer program indeed shows we have a $\langle\Z/2\rangle$ in each of these degrees.
    \item The author believes in his $6.8, L4$ it should be $0\leq i\leq \frac{n-3}{2}$. His classes there are $\Z/2\langle\frac{\aal^i}{\al^i}\rangle_{i\geq 1,k\geq 7}[\uta]$. The author believes the classes $\Z/2\langle\frac{\aal^3}{\al}\rangle[\uta]$ and $\Z/2\langle\frac{\aal^5}{\al}\rangle[\uta]$ should exist. This is also verified by the computer program. For example, we have the Mackey functors $\underline{\pi_{\lambda-3\alpha+j(2-2\alpha)}}H\uZ=\langle\Z/2\rangle,j\geq 0$ by the computer program. 
    \item The suggested modification in $6.6, L6$ is also verified by the computer program.
\end{enumerate}
\end{Rem}

\section{Remarks on \texorpdfstring{$RO(G)$}{rog}-graded commutativity}\label{secgraded}
Here we provide a solid demonstration on why each level of our $C_4$-Mackey functors is a actual commutative ring, as opposed to a graded-commutative ring where there is a sign issue. Knowing the classes $\aal,\al,\uta,\ul$ commute without any sign enables us to carry out the entire computation without worry (even if this is not true, we can still do the computation by enforcing an order on the four classes). When we have the knowledge of the entire structure (\ref{answer}), the entire commutativity issue can be determined.

The $RO(G)$-graded commutativity of an equivariant commutative ring spectrum is systematically studied by Dan Dugger in \cite{Dugger14coherence}. See Proposition 6.13 and Remark 6.14 in that paper in particular. The application in our context is as follows: Take $a\in\pi_{i+j\alpha+k\lambda}H\uZ,b\in\pi_{m+n\alpha+o\lambda}H\uZ$, then we have
\begin{equation}
    a\cdot b=(-1)^{im}(-1)^{jn}b\cdot a\label{graded}
\end{equation}
This is because in Dugger's paper, the sign before $b\cdot a$ is determined by $({\tau}_{S^1})^{im}(\tau_{S^{\alpha}})^{jn}({\tau}_{S^{\lambda}})^{ko}$, where $\tau_X$ means the trace of identity of the $G$-spectrum $X$. Following \cite[Rem 8.6, p.171 and Rem 9.8, p.288]{LMS}, in $\pi_{\bigstar}S^0$, we have
\begin{equation*}
    \begin{aligned}
    &\tau_{S^1}=\chi(S^1)=-\chi(G/G)=-1\\
    &\tau_{S^{\alpha}}=\chi(S^{\alpha})=\chi(G/G)-\chi(G/C_2)=1-\res\circ\tr\\
    &\tau_{S^{\lambda}}=\chi(S^{\lambda})=\chi(G/G)-\chi(G/e)+\chi(G/e)=1.
    \end{aligned}
\end{equation*}
Here $1=id:S^0\to S^0$, $\res,\tr$ are the two maps between $G/G$ and $G/C_2$ which will become $\res,\tr$ on the Mackey functor level upon applying a contravariant functor like $[-,H\uZ]^G$. For the Euler characteristics, $\chi(G/G)$ is represented in the Burnside category $\mathscr{B}_G$ by the map $*\xleftarrow{}*\xrightarrow{}*$ as in Remark 9.8 in p.288 and it correspond to $id:G/G_+\to G/G_+$ in $Ho(Sp^G)$ by Prop 9.6 in p.286. $\chi(G/C_2)$ is represented by $*\xleftarrow{}G/C_2\xrightarrow{}*$ in $\mathscr{B}_G$ and correspond to $\res\circ\tr$ in $Ho(Sp^G)$. Under the Hurewicz map $\pi_{\bigstar}S^0\to \pi_{\bigstar}H\uZ$, we have
\begin{equation*}
    \tau_{S^1}=-1,\tau_{S^{\alpha}}=-1,\tau_{S^{\lambda}}=1.
\end{equation*}
Thus we get (\ref{graded}).

From this we deduce that in $\pi_{\bigstar}H\uZ$ every pair of elements commute without a sign: Firstly, $\aal,\al,\uta,\ul$ commute with every class. That is because $\aal$ is $2$-torsion while the other three elements have even grading of $1$ and $\alpha$. The same reasoning applies to most of the classes, and the only cases where we could possibly have sign issues are multiplications of $4$-torsion classes with a $\Si$ in front of them. But we have shown that these products are $0$. Commutativity of $G/C_2$-level and $G/e$-level are similarly determined.

\bibliographystyle{alpha}
\bibliography{bib}

\begin{thebibliography}{HSWX18}

\bibitem[Ang22]{Angel22}
Vigleik Angeltveit.
\newblock Recovering a cohomological mackey functor from its restriction to
  sylow subgroups.
\newblock {\em arXiv:2201.01444}, 2022.

\bibitem[BG21]{basu21}
Samik Basu and Surojit Ghosh.
\newblock Non-trivial extensions in equivariant cohomology with constant
  coefficients.
\newblock {\em arXiv:2108.12763}, 2021.

\bibitem[Boa99]{Boa99}
J.~Michael Boardman.
\newblock Conditionally convergent spectral sequences.
\newblock In {\em Homotopy invariant algebraic structures ({B}altimore, {MD},
  1998)}, volume 239 of {\em Contemp. Math.}, pages 49--84. Amer. Math. Soc.,
  Providence, RI, 1999.

\bibitem[Bro82]{Brown}
Kenneth~S. Brown.
\newblock {\em Cohomology of groups}, volume~87 of {\em Graduate Texts in
  Mathematics}.
\newblock Springer-Verlag, New York-Berlin, 1982.

\bibitem[Dug05]{Duggerc2}
Daniel Dugger.
\newblock An {A}tiyah-{H}irzebruch spectral sequence for {$KR$}-theory.
\newblock {\em $K$-Theory}, 35(3-4):213--256 (2006), 2005.

\bibitem[Dug14]{Dugger14coherence}
Daniel Dugger.
\newblock Coherence for invertible objects and multigraded homotopy rings.
\newblock {\em Algebr. Geom. Topol.}, 14(2):1055--1106, 2014.

\bibitem[Geo19]{NickG}
Nick Georgakopoulos.
\newblock The $ {RO (C_4)} $ integral homology of a point.
\newblock {\em arXiv:1912.06758}, 2019.

\bibitem[GM95]{GM95}
J.~P.~C. Greenlees and J.~P. May.
\newblock Generalized {T}ate cohomology.
\newblock {\em Mem. Amer. Math. Soc.}, 113(543):viii+178, 1995.

\bibitem[Gre18]{Gre18}
J.~P.~C. Greenlees.
\newblock Four approaches to cohomology theories with reality.
\newblock In {\em An alpine bouquet of algebraic topology}, volume 708 of {\em
  Contemp. Math.}, pages 139--156. Amer. Math. Soc., [Providence], RI, [2018]
  \copyright 2018.

\bibitem[HHR16]{HHRa}
M.~A. Hill, M.~J. Hopkins, and D.~C. Ravenel.
\newblock On the nonexistence of elements of {K}ervaire invariant one.
\newblock {\em Ann. of Math. (2)}, 184(1):1--262, 2016.

\bibitem[HHR17]{HHRb}
Michael~A. Hill, Michael~J. Hopkins, and Douglas~C. Ravenel.
\newblock The slice spectral sequence for the {$C_4$} analog of real
  {$K$}-theory.
\newblock {\em Forum Math.}, 29(2):383--447, 2017.

\bibitem[HSWX18]{c4ht4}
Michael~A Hill, XiaoLin~Danny Shi, Guozhen Wang, and Zhouli Xu.
\newblock The slice spectral sequence of a $ {C_4} $-equivariant height-4
  {L}ubin-{T}ate theory.
\newblock {\em arXiv:1811.07960}, 2018.

\bibitem[Liu22]{Liu22}
Yutao Liu.
\newblock Computing equivariant homology with a splitting method.
\newblock {\em arXiv:2203.14361}, 2022.

\bibitem[LM06]{LM06}
L.~Gaunce Lewis, Jr. and Michael~A. Mandell.
\newblock Equivariant universal coefficient and {K}\"{u}nneth spectral
  sequences.
\newblock {\em Proc. London Math. Soc. (3)}, 92(2):505--544, 2006.

\bibitem[LMSM86]{LMS}
L.~G. Lewis, Jr., J.~P. May, M.~Steinberger, and J.~E. McClure.
\newblock {\em Equivariant stable homotopy theory}, volume 1213 of {\em Lecture
  Notes in Mathematics}.
\newblock Springer-Verlag, Berlin, 1986.
\newblock With contributions by J. E. McClure.

\bibitem[May96]{Alaska}
J.~P. May.
\newblock {\em Equivariant homotopy and cohomology theory}, volume~91 of {\em
  CBMS Regional Conference Series in Mathematics}.
\newblock Published for the Conference Board of the Mathematical Sciences,
  Washington, DC; by the American Mathematical Society, Providence, RI, 1996.
\newblock With contributions by M. Cole, G. Comeza\~{n}a, S. Costenoble, A. D.
  Elmendorf, J. P. C. Greenlees, L. G. Lewis, Jr., R. J. Piacenza, G.
  Triantafillou, and S. Waner.

\bibitem[MNN19]{MNN19}
Akhil Mathew, Niko Naumann, and Justin Noel.
\newblock Derived induction and restriction theory.
\newblock {\em Geom. Topol.}, 23(2):541--636, 2019.

\bibitem[TW95]{TW95}
Jacques Th\'{e}venaz and Peter Webb.
\newblock The structure of {M}ackey functors.
\newblock {\em Trans. Amer. Math. Soc.}, 347(6):1865--1961, 1995.

\bibitem[Web00]{Web00}
Peter Webb.
\newblock A guide to {M}ackey functors.
\newblock In {\em Handbook of algebra, {V}ol. 2}, volume~2 of {\em Handb.
  Algebr.}, pages 805--836. Elsevier/North-Holland, Amsterdam, 2000.

\bibitem[Yan23]{LocThm}
Guoqi Yan.
\newblock A localization theorem for equivariant spectra.
\newblock {\em preprint}, 2023.

\bibitem[Zen17]{Zeng17}
Mingcong Zeng.
\newblock Equivariant {Eilenberg-Mac Lane} spectra in cyclic $ p $-groups.
\newblock {\em arXiv:1710.01769}, 2017.

\end{thebibliography}

\end{document}